\documentclass[10pt]{amsart}

\usepackage{amssymb, amscd}
\usepackage{epsfig, mathtools}
\usepackage[vmargin=1in, hmargin=1.3in]{geometry}
\usepackage[font=small,format=plain,labelfont=bf,up,textfont=it,up]{caption}
\usepackage{microtype}
\usepackage[shortlabels]{enumitem}
\usepackage[backref=page, bookmarks, bookmarksdepth=2, colorlinks=true, linkcolor=blue, citecolor=blue, urlcolor=blue]{hyperref}
\usepackage{etoolbox}
\usepackage{tikz-cd}
\usepackage{mathrsfs}  
\usepackage{float}
\usepackage{import}

\usepackage{caption}
\usepackage{transparent}
\usepackage[inkscapelatex=true]{svg}

\apptocmd{\thebibliography}{\raggedright}{}{}

\makeatletter
\patchcmd{\@maketitle}{\global\topskip42\p@\relax}
  {\global\topskip42\p@\relax \vspace*{-38pt}}
  {}{}
\makeatother

\setenumerate[0]{label=(\alph*)}

\renewcommand*{\backref}[1]{}
\renewcommand*{\backrefalt}[4]{%
    \ifcase #1 (Not cited.)%
    \or        (Cited on page~#2.)%
    \else      (Cited on pages~#2.)%
    \fi}

\numberwithin{equation}{section}

\theoremstyle{plain}
\newtheorem{theorem}{Theorem}[section]
\newtheorem{maintheorem}{Theorem}

\newtheorem{proposition}[theorem]{Proposition}
\newtheorem{lemma}[theorem]{Lemma}

\newtheorem{corollary}[theorem]{Corollary}

\newtheorem{question}[theorem]{Question}

\theoremstyle{definition}
\newtheorem{asm}[theorem]{Assumption}

\newtheorem{defn}[theorem]{Definition}

\newtheorem{notn}[theorem]{Notation}

\theoremstyle{remark}
\newtheorem{rmk}[theorem]{Remark}

\newtheorem{eg}[theorem]{Example}

\DeclareMathOperator{\Mod}{Mod}

\title{Flexible Surfaces in $\mathbb{C}P^2$ and $S^2 \times S^2$}

\author{Joshua Lehman}
\address{Dept of Mathematics; Univ of Notre Dame; Hurley Hall; Notre Dame, IN 46556; USA}
\email{jlehman4@nd.edu}
\thanks{}

\date{\today}

\begin{document}
\maketitle

\begin{abstract}
A surface $\Sigma$ in a 4-manifold $M$ is called flexible if any mapping class of the surface arises as the restriction of a diffeomorphism $(M,\Sigma) \to (M,\Sigma)$. We construct flexible surfaces in $\mathbb{C}P^2$ and $S^2 \times S^2$ within any prescribed non-characteristic homology class. Within characteristic homology classes there is a spin structure obstructing flexibility and we construct so-called spin-flexible representatives.
\end{abstract}

\section{Introduction}

Let $\Sigma^2 \subset M^4$ be a closed oriented surface in a closed oriented 4-manifold $M$. Associated to the pair $(M,\Sigma)$ is the \textit{relative mapping class group} $\Mod(M,\Sigma) = \pi_0(\textrm{Diff}^+(M,\Sigma))$, where $\textrm{Diff}^+(M,\Sigma)$ consists of orientation preserving diffeomorphisms of $M$ fixing $\Sigma$ \textit{setwise}. Restriction determines a homomorphism $\mathcal{R} : \Mod(M,\Sigma) \to \Mod^{\pm}(\Sigma) = \pi_0(\textrm{Diff}(\Sigma))$. Following Hirose \cite{HiroseS^4}, the \textit{extendable subgroup} $\mathcal{E}(M,\Sigma)$ of $\Sigma$ in $M$ is the image of $\mathcal{R}$ in $\Mod(\Sigma) = \pi_0(\textrm{Diff}^+(\Sigma))$. The surface $\Sigma$ is called \textit{flexible} in $M$ if $\mathcal{E}(M,\Sigma) = \Mod(\Sigma)$.  If $\Sigma$ is \textit{characteristic}\footnote{Recall that a class in $ H^2(M;\mathbb{Z})$ is \textit{characteristic} if it is an integral lift of the second Stiefel-Whitney class of $M$. A closed surface $\Sigma \subset M$ is characteristic if the Poincar\'{e} dual of $[\Sigma] \in H_2(M;\mathbb{Z})$ is.} in $M$ and $H_1(M;\mathbb{Z}) =0$ there is an obstruction to flexibility, namely \textit{Rochlin's quadratic form}  $q_{\Sigma} : H_1(\Sigma;\mathbb{Z}/2\mathbb{Z}) \to \mathbb{Z}/2\mathbb{Z}$ (Proposition \ref{RochlinsForm}). The mapping class group $\Mod(\Sigma)$ acts on the set of quadratic forms on $H_1(\Sigma;\mathbb{Z}/2\mathbb{Z})$ by pullback. By definition of $q_\Sigma$, $\mathcal{E}(M,\Sigma) \leq \Mod(\Sigma)[q_{\Sigma}]$, where $\Mod(\Sigma)[q_{\Sigma}]$ denotes the stabilizer of $q_{\Sigma}$. We say $\Sigma$ is \textit{spin-flexible} if $\mathcal{E}(M,\Sigma) = \Mod(\Sigma)[q_{\Sigma}]$. A guiding question of Hirose, appearing as Problem 6.3 in \cite{HiroseSurvey}, is as follows. 

\begin{question} \label{homology}
    Assume that $M$ is simply connected. Let $x \in H_2(M;\mathbb{Z})$ be an element which is not characteristic. Does $x$ admit a flexible representative? 
\end{question}
    
    We resolve this question for $\mathbb{C}P^2$ and $S^2 \times S^2$. In particular we prove

\begin{maintheorem}
Let $M = \mathbb{C}P^2$ or $S^2 \times S^2$. Fix $x \in H_2(M;\mathbb{Z})$. If $x$ is characteristic, there exists a spin-flexible representative for $x$, otherwise there is a flexible representative for $x$. 
\end{maintheorem}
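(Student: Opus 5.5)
The plan is to use the standard generating set for $\Mod(\Sigma)$ by Dehn twists along the Humphries curves $c_0,c_1,\dots,c_{2g}$. For the spin case, use the Hirose-type generating set for the spin mapping class group $\Mod(\Sigma)[q]$. It then suffices to realize each generator as the restriction of a diffeomorphism of $(M,\Sigma)$. The basic local tool is the following. Suppose a simple closed curve $c\subset\Sigma$ bounds a disk $D\subset M$ whose interior misses $\Sigma$ and which meets $\Sigma$ normally along $c$ with framing $\pm 1$ relative to the surface framing (or $0$ in a suitable sense). Then a neighborhood of $D\cup\Sigma$ near $c$ looks like the standard model in $S^4$ or in $\mathbb{C}P^2$ near a line. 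In that model, a Dehn twist along $c$ extends, for instance by a twist supported in a neighborhood of a sphere or of a $\pm1$-framed disk, as in Hirose's constructions for $S^4$ and $\mathbb{C}P^2$. Concretely, I would first record two extension lemmas. The first: an unknotted $(-1)$- or $(+1)$-framed embedded sphere $S$ meeting $\Sigma$ transversely... is one option. More simply: if $c$ bounds an embedded disk $D$ with $D\cap\Sigma=\partial D$ and the Rochlin framing of $D$ is odd, then $t_c\in\mathcal E(M,\Sigma)$. This is the usual ``$(\pm1)$-framed disk'' trick: a neighborhood of $\Sigma\cup D$ contains a $\pm1$ sphere after tubing, and the complex conjugation / Gluck-type twist along it realizes $t_c$. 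For the characteristic case the framing is forced to match $q_\Sigma(c)$, so only even-framed disks exist on curves with $q(c)=0$. On those curves $t_c^2$, or $t_c$ when $q(c)=1$, is realized, which matches the spin generators.

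The second step is the construction of representatives. Start with a low-genus model and add genus by stabilization, tubing $\Sigma$ to an unknotted torus in a $4$-ball. For $\mathbb{C}P^2$ with $x=d[\mathbb{C}P^1]$, take the smooth algebraic curve of degree $d$ (or $|d|$ with reversed orientation, and an unknotted surface for $d=0$). For $S^2\times S^2$ with $x=(a,b)$, take a smooth curve of bidegree $(a,b)$, handling the signs by orientation-reversing self-diffeomorphisms. Then take the connected sum with standard tori $T\subset S^4$ whose curves bound $\pm1$- or $0$-framed disks. Taking such a connected sum if needed costs nothing, since the statement only asks for some representative. The key is a chain of curves $c_0,\dots,c_{2g}$ on $\Sigma$ realizing Humphries generators, each bounding a framed disk with the appropriate parity. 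For tori added as in Hirose's $S^4$ examples, the disks are $0$-framed, which only gives spin-type generators. To get odd framings in the non-characteristic case, use the existence of a class $y$ with $x\cdot y\not\equiv y\cdot y \pmod 2$, which is exactly non-characteristicity. Tube a disk bounded by $c$ to a surface representing $y$ (a line, or a fiber sphere) to change its framing by an odd amount; this changes $q$-parity. One then has to check that the tubed disk can be made embedded and disjoint from $\Sigma$ in its interior, which can be arranged by choosing the sphere in $\mathbb{C}P^2\setminus$ or $S^2\times S^2\setminus$ the surface away from $\Sigma$ after isotopy. Because $[\Sigma]\cdot y\neq0$ in general, I would instead tube across a meridian, or add intersections that are removed by tubing along $\Sigma$. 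This changes the disk into a genus-zero surface only after using the curves' complement.

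The main obstacle I expect is this framing-change step. It requires producing, for every Humphries curve (or enough curves, using conjugation by already-realized twists to move them), an embedded disk with odd framing whose interior avoids $\Sigma$. The simplest first attempt is to realize odd framing on a single nonseparating curve $c_1$. Then use that the already extendable spin mapping class group acts transitively on curves with a given $q$-value. Together, one extra twist and $\Mod(\Sigma)[q]$ generate $\Mod(\Sigma)$, since $\Mod(\Sigma)[q]$ is a proper maximal-type subgroup and $t_{c_1}\notin\Mod(\Sigma)[q]$ for suitable $q$. So the non-characteristic case reduces to the spin-flexibility argument plus one odd-framed disk. That disk is obtained from a complex line or fiber meeting $\Sigma$ in an odd number of points, modified locally near a meridian.
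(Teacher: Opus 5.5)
Your representatives (algebraic curves stabilized by unknotted tori) are essentially the paper's. The proposal has genuine gaps, however, at exactly the two steps that carry the theorem.

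\textbf{The characteristic case.} The only extension tool you actually have is the $\pm1$ case. If $c$ bounds an embedded disk with interior disjoint from $\Sigma$ and relative Euler number $\pm1$, then a neighborhood of that disk plus an annulus around $c$ is modeled on a pushed-in Hopf band in a $4$-ball. Proposition~\ref{HiroseHopfBand}, which is an explicit isotopy rather than a Gluck twist or complex conjugation, then gives $T_c$. For other odd framings no such statement is available. This is why the paper has to conjugate the $\mathrm{lk}=-3$ curve of Proposition~\ref{existcurves} into a Hopf band curve.

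You then simply assert that your representative carries a generating set of $\Mod(\Sigma)[q_\Sigma]$ made of such curves. On $X_d\,\#\,(\text{tori})$, the curves with evident $\pm1$-framed disks are the braid (vanishing cycle) Hopf band curves and curves in the unknotted tori. The former generate only a subgroup of the framed group $\Mod(F)[\phi]$. Showing that these, together with the spin mapping class group of the unknotted part, generate all of $\Mod(Z)[q_Z]$ is the real content. Hirose's generators do not cover this: they are for unknotted surfaces in $S^4$, whose form is even, whereas here $\mathrm{Arf}(q_\Sigma)=(\sigma(M)-x\cdot x)/8$ can be odd (e.g.\ $d=3,5$). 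Also, a Humphries chain necessarily contains curves with $q=0$, whose twists are not in the stabilizer at all.

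The paper supplies the missing step with Lemma~\ref{twistson1curves}:
\begin{itemize}
\item $S'$ minus a disk carries an integral framing $\phi^+$ with $\phi^+\equiv\theta_S+1 \pmod 2$.
\item Because the unknotted piece has genus at least $5$, Lemma~\ref{stabilization} and Theorem~\ref{AdmissibleTwists} reduce $\Mod(P')[\phi^+]$ to Hopf band twists plus admissible twists in the unknotted piece.
\item Those admissible twists extend by Hirose's rel-boundary spin-flexibility (Proposition~\ref{relboundaryhirose}).
\item Framed change of coordinates then conjugates any nonseparating $\gamma$ with $q(\gamma)=1$ into the unknotted piece, and Theorem~\ref{admissiblespin} finishes.
\end{itemize}

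\textbf{The non-characteristic case.} The framing-change step you flag as the main obstacle is left unresolved, and it is not a mere technicality. Tubing a disk to a surface $Y$ representing $y$ creates interior intersections with $\Sigma$, algebraically $x\cdot y$ of them. Removing them by boundary twisting changes the relative Euler number again. Nothing in your outline produces an embedded disk with interior disjoint from $\Sigma$ and framing exactly $\pm1$, which is what the extension lemma needs.

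Your final reduction, ``$\Mod(\Sigma)[q]$ plus one twist generates $\Mod(\Sigma)$,'' also presupposes $\Mod(\Sigma)[q]\le\mathcal{E}(M,\Sigma)$ for some quadratic form $q$. But $\Sigma$ is non-characteristic here, so Rochlin's form is not defined. You never specify which $q$ you mean or show that its stabilizer extends.

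The paper avoids membranes entirely. Let $S=\Sigma\cap B^4$. Non-characteristicity makes the boundary link non-proper, so some component $\Delta$ has $\theta_S(\Delta)$ odd. Isotoping $\gamma$ across the capping disk of $\Delta$ then yields $\widetilde\gamma\subset S$ with $\theta_S(\widetilde\gamma)=\theta_S(\gamma)+1$, by the pair-of-pants relation (Proposition~\ref{geometricwitness}). Since $\gamma$ and $\widetilde\gamma$ are isotopic on $Z$, Lemma~\ref{twistson1curves} gives $T_\gamma\in\mathcal{E}(M,Z)$ for every nonseparating $\gamma$.

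This is the rigorous form of your intuition that a class $y$ with $x\cdot y\not\equiv y\cdot y$ flips the parity; the $2$-handle core plays the role of $Y$. The difference is that it is implemented as an isotopy of the curve on the surface, so no framing bookkeeping arises.
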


We may further ask for flexible representatives which minimize genus in their homology class. Recall that smooth degree $d$ plane curves in $\mathbb{C}P^2$ represent $d[\mathbb{C}P^1] \in H_2(\mathbb{C}P^2;\mathbb{Z})$, minimize genus in their homology class (Thom Conjecture, \cite{KronMrowka}), and are characteristic if and only if $d$ is odd. In this direction we establish that a single \textit{stabilization} suffices for (spin) flexibility. 

\begin{maintheorem} \label{ThmB}
Let $X_d \subset \mathbb{C}P^2$ be a smooth plane curve of degree $d$.  Let $T \subset S^4$ be the boundary of a solid torus in $S^4$. Let $Z_d$ be the stabilization $X_d \# T \subset \mathbb{C}P^2 \# S^4 \cong \mathbb{C}P^2$. If $d$ is even, $Z_d$ is flexible. If $d$ is odd, $Z_d$ is spin-flexible. 
\end{maintheorem}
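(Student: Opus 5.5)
Since $\Mod(Z_d)$ is generated by Dehn twists, and the stabilizer $\Mod(Z_d)[q]$ is generated by twists along curves with $q=1$ (plus squares of twists along curves with $q=0$), I will prove the theorem by realizing enough Dehn twists as restrictions of diffeomorphisms of $(\mathbb{C}P^2, Z_d)$. Write $g$ for the genus of $X_d$, so $Z_d$ has genus $g+1$.

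\textbf{Step 1: twists on the $X_d$ side.} The monodromy of the family of smooth degree $d$ plane curves over the complement of the discriminant extends to ambient isotopies of $\mathbb{C}P^2$. Hence its image lies in $\mathcal{E}(\mathbb{C}P^2,X_d)$. This image is known to contain the Dehn twists along vanishing cycles, which are the curves appearing in a nodal degeneration; in fact it equals the full stabilizer of the relevant spin/theta characteristic. To get twists that also extend over $Z_d$, I choose a small ball $B$ meeting $X_d$ in a disk where the stabilization is performed, and take vanishing cycles disjoint from that disk. The corresponding diffeomorphisms can be taken to be supported away from $B$, since the degenerations can be chosen to happen away from a fixed point. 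They therefore preserve $Z_d$.

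\textbf{Step 2: twists on the $T$ side.} Let $a,b$ be the meridian and longitude of the stabilizing torus $T=\partial(S^1\times D^2)\subset S^4$. The curve $a$ bounds a disk in the solid torus. I would realize the twist along $a$ by a rotation diffeomorphism in a neighborhood of that disk; this is the standard argument that $T$ in $S^4$ has an extendable twist along a curve bounding a framed disk. The longitude $b$ also bounds a disk in $S^4$ on the complementary side, so I would realize its twist in the same way. Since $T$ is unknotted in $S^4$, Hirose's result gives $\mathcal{E}(S^4,T)$ equal to the spin stabilizer, which is generated by twists along curves with $q=1$ together with the relevant square twists.

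\textbf{Step 3: a connecting twist.} To generate everything I also need a twist along a curve $c$ crossing the connect-sum neck. It should meet one standard curve of $X_d$ and one curve of $T$ once each, as in a Humphries generating chain. I would find an embedded disk or a $\pm1$-framed sphere-like surface bounded by $c$: a band sum of a vanishing-cycle disk with a compressing disk of $T$. The twist along $c$ would then be realized by the same local model, provided the framing relative to $Z_d$ is correct. Equivalently, the framing must give $q(c)=1$, or the appropriate parity of the $\mathbb{Z}/2$ framing obstruction.

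\textbf{Step 4: generation and parity.} It remains to show that the twists collected in Steps 1 to 3 generate $\Mod(Z_d)$ when $d$ is even, and $\Mod(Z_d)[q_{Z_d}]$ when $d$ is odd. When $d$ is even, $Z_d$ is not characteristic. The point of stabilizing is that $T$ supplies a torus summand on which the available twists can be adjusted so that a full Humphries chain appears; for instance, a twist along $a$ conjugated by elements realizing $b$-twists reaches curves of either $q$-type. When $d$ is odd, I would invoke the known generating sets for spin mapping class groups (Hirose, Johnson) by twists along $q=1$ curves, together with the $\mathrm{Sp}$-level surjectivity.

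\textbf{Main obstacle.} I expect the hardest part to be Step 3 together with Step 4 in the even case. There one must verify the framings of the surfaces bounded by the connecting curves, and show that nonspin twists are really obtained. My first attempt would be to exhibit explicitly a curve of $Z_d$ that bounds an immersed or embedded sphere of odd self-intersection built from a line $\mathbb{C}P^1$ when $d$ is even.
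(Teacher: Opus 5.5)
\textbf{Step 2 is false as stated.} The meridian $a$ of $T=\partial(S^1\times D^2)$ bounds the disk $\{pt\}\times D^2$. That disk has interior disjoint from $T$ and relative Euler number $0$. The longitude $b$ bounds such a disk on the complementary side. So Rochlin's form of the unknotted torus has $q(a)=q(b)=0$, and $q(T_a^{\pm1}(b))=q(a)+q(b)+1=1\neq q(b)$. Hence $T_a\notin\Mod(T)[q]\supseteq\mathcal{E}(S^4,T)$, and likewise for $T_b$.

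The local model you invoke works for a disk of relative Euler number $\pm1$ (a Hopf band, Proposition~\ref{HiroseHopfBand}), but not for a $0$-framed disk, which yields only $T_a^2$ (the tube isotopy). For $d$ odd the same disk shows $q_{Z_d}(a)=0$, so $T_a\notin\mathcal{E}(\mathbb{C}P^2,Z_d)$ at all, and Step 2 would contradict Proposition~\ref{RochlinsForm}. It is also inconsistent with your own next sentence citing Hirose. The even-case mechanism you sketch in Step 4 (conjugating $T_a$ by $b$-twists) therefore rests on nothing.

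\textbf{The real gap is Step 4, and it is where the whole theorem lives; you only assert it.} Step 1 can be made rigorous: an ambient diffeomorphism preserving $X_d$ can be isotoped, through such diffeomorphisms, to be the identity near the connect-sum disk. But the monodromy image is only $\Mod(X_d)[\phi_d]$ for the $(d-3)$-spin structure $\phi_d$, not a theta-characteristic stabilizer. For even $d\ge 6$ it is a proper subgroup of $\Mod(X_d)$. For odd $d\ge 7$ it is a proper subgroup of $\Mod(X_d)[\overline{\phi_d}]$; for instance, twists on curves of winding number $2$ are missing. Adding torus-side twists and one connecting Hopf band twist does not obviously generate $\Mod(Z_d)$ or $\Mod(Z_d)[q_{Z_d}]$. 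Quoting generating sets by $q=1$ twists does not help unless you show every such twist lies in your subgroup.

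The paper supplies exactly this missing mechanism, in four steps.
\begin{enumerate}
\item It extends Ferretti's framing of $F_{\beta_d}$ over $P=(F_{\beta_d}\#T)-D_1$ by declaring the neck-crossing Hopf band curves $\delta_1,\delta_2$ admissible. This makes $\phi^+ \bmod 2$ a lift of the linking form.
\item It proves $p(\Mod(P)[\phi^+])\le\mathcal{E}(\mathbb{C}P^2,Z_d)$ (Proposition~\ref{framedchange}). The inputs are Ferretti's generation of $\Mod(F)[\phi]$ by Hopf band twists, the point-pushing subgroup handled via Hopf band curves and separating unlinks, and the framed stabilization Lemma~\ref{stabilization}.
\item It uses the compression curve created by the stabilization to realize a twist on a curve $\alpha$ with $\phi(\alpha)=2$ (Proposition~\ref{existcurves}). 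Together with a dual admissible curve and framed change of coordinates, this gives $T_\gamma$ for every $\gamma\subset P$ of even winding number.
\item It settles parity. For $d$ odd, $i^*q_{Z_d}=\theta_S\equiv\phi^++1 \pmod 2$ on $P$. For $d$ even, one slides across a $2$-handle core disk whose boundary has odd self-linking (Proposition~\ref{geometricwitness}); this is the concrete form of the idea in your last sentence.
\end{enumerate}

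Finally, the cases $d\le 4$ need separate treatment, which your generic plan does not address. The framed machinery needs genus at least $5$. For $d\le2$, $X_d$ is a sphere, so Step 1 gives nothing. $Z_3$ has genus $2$, below the range of the Calderon--Salter generation theorem you would invoke; the paper uses Hamenst\"adt's chain generators there.
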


This result may be viewed as a generalization of \cite[Theorem 5.1]{HiroseCP^2} to all $d \neq 3$ (Remark \ref{d=3difference}).\newline
 
\textbf{Context and Related Work.} Recall that a surface in a 4-manifold $M$ is \textit{unknotted} if it is the boundary of a handlebody\footnote{In this paper we exclusively work with orientable surfaces.} in $M$. Montesinos established \cite[Theorem 5.4]{Montesinos} that an unknotted torus in $S^4$ is spin-flexible and Hirose \cite[Theorem 1.1]{HiroseS^4} generalized this to unknots of all higher genera. On the other hand, there are examples (non-trivial spun tori) of \textit{knotted surfaces} in $S^4$ whose extendable subgroup is much \textit{smaller} (namely of infinite index, see \cite{Hirosespunknots} for precise details). In $\mathbb{C}P^2$ unknots are no longer characteristic and are \textit{flexible} surfaces \cite[Theorem 3.1]{HiroseCP^2}. A broader version of Question \ref{homology} asks only for the \textit{existence} of flexible surfaces in a simply connected 4-manifold. In this regard, Hirose and Yasuhara construct flexible surfaces of any genus in $\mathbb{C}P^2, S^2\times S^2, \mathbb{C}P^2 \# \overline{\mathbb{C}P^2}$ or any elliptic surface \cite[Theorem 3.1]{HiroseYasuhara}. Yet another version allows one to stabilize the ambient 4-manifold with an $S^2 \times S^2$ summand. Hirose and Yasuhara prove \cite[Theorem 4.1]{HiroseYasuhara} that for any $\Sigma^2 \subset M^4$ with $M$ simply connected, the surface $\Sigma \# S^2 \times \{*\} \subset M \# S^2 \times S^2$ is flexible. In particular, if one is allowed to change \textit{both} the ambient manifold and homology class, the best result is possible.

 Let $X_d \subset \mathbb{C}P^2$ be a smooth degree $d$ plane curve. Hirose proved that $X_3$ and $X_4$ are flexible surfaces in $\mathbb{C}P^2$ \cite[Theorem 4.2]{HiroseCP^2}. Moreover, Hirose proved that, for an unknot $\Sigma_g \subset S^4$ of genus $g$, the surface $X_3 \# \Sigma_g$ is spin-flexible \cite[Theorem 5.1]{HiroseCP^2}. A corollary of work of Salter is that $X_5 \subset \mathbb{C}P^2$ is spin-flexible \cite[Theorem A]{SalterPlaneCurves}. The situation for plane curves of higher degree is, to the best of the authors knowledge, unknown. However, $\mathcal{E}(\mathbb{C}P^2,X_d)$ is a finite index subgroup of $\Mod(X_d)$ for $d\geq 6$. This follows from work of Salter \cite[Theorem 1.3]{SalterToric}.

Precisely, let $\mathcal{P}_d \subset |\mathcal{O}(d)|$ denote the locus of smooth degree $d$ plane curves. The tautological family over $\mathcal{P}_d$ determines a monodromy representation $\rho_d : \pi_1(\mathcal{P}_d,X_d) \to \Mod(X_d)$. By the isotopy extension theorem, the homomorphism $\rho_d$ factors through an \textit{embedded monodromy representation} $\widetilde{\rho}_d : \pi_1(\mathcal{P}_d,X_d) \to \Mod(\mathbb{C}P^2,X_d)$. In particular, the image of $\rho_d$ is a lower bound for $\mathcal{E}(\mathbb{C}P^2,X_d)$. Extending work in \cite{SalterToric}, Calderon-Salter \cite{CalderonSalter} establish that the image of $\rho_d$ in $\Mod(X_d)$ coincides with the \textit{stabilizer} $\Mod(X_d)[\phi_d]$ of a $(d-3)$-spin structure $\phi_d$.\footnote{An $r$-spin structure on a surface $\Sigma$ is a cohomology class $\xi$ on the unit tangent bundle $\xi \in H^1(UT\Sigma;\mathbb{Z}/r\mathbb{Z})$ which assigns 1 to a positively oriented fiber.} In particular, $$ \Mod(X_d)[\phi_d] \leq \mathcal{E}(\mathbb{C}P^2,X_d).$$

The higher spin structure $\phi_d$ is an instance of a more general phenomena used in proving both Theorems \ref{thmA} and \ref{ThmB} - it arises from a specific \textit{framing}\footnote{Trivialization of the tangent bundle.} of the surface $X_d$ in an affine piece $\mathbb{C}^2 \subset \mathbb{C}P^2$. Let $B^4_{\varepsilon}(0) \subset \mathbb{C}^2$ be a ball of radius $\varepsilon > 0$ about the origin $0 \in \mathbb{C}^2$. Up to isotopy, $X_d \cap B^4_{\varepsilon}(0) \subset \mathbb{C}^2$ is a Seifert surface for the closure of a positive braid. Ferretti \cite{Ferretti}, using work of Stallings \cite{Stallings}, associates to the closure of any positive braid $\beta$ a \textit{framed} Seifert surface $F_\beta \subset S^3$. Our representatives are obtained by capping off such Seifert surfaces. In fixing only the ambient homology class we may stabilize with an unknotted surface. Our techniques rest on the contrast between the geometric rigidity of the framed surfaces and the methods of Hirose established for unknots in $S^4$ \cite{HiroseS^4}.

\begin{rmk} (Higher spin structures and Rochlin's form)
When $d$ is odd, this story is not disjoint from the topological one. In this case, $d-3$ is even and $\phi_d$ has a mod 2 reduction $\overline{\phi_d}$, which \textit{coincides} with the 2-spin structure corresponding to Rochlin's quadratic form (see \cite{Libgober} for details) and we have 
$$ \Mod(X_d)[\phi_d] \leq \mathcal{E}(\mathbb{C}P^2,X_d) \leq \Mod(X_d)[\overline{\phi_d}]. $$
\end{rmk}

\textbf{General overview.} Consider $M = \mathbb{C}P^2$ or $S^2 \times S^2$, equipped with its standard handlebody decomposition. Recall that any homology class $x \in H_2(M;\mathbb{Z})$ admits a representative $\Sigma \subset M$ which intersects the 0-handle $B^4 \subset M$ in a Seifert surface $F_\Sigma = \Sigma \cap B^4 = \Sigma \cap \partial B^4$ where $\Sigma\backslash \textrm{int}(F_\Sigma) = D_\Sigma$ is a collection of closed 2-disks (Subsection \ref{ConstructingSurfaces}). Let $\theta_{F_\Sigma} : H_1(F_\Sigma;\mathbb{Z}/2\mathbb{Z}) \to \mathbb{Z}/2\mathbb{Z}$ be the Seifert form and $i_\Sigma : F_\Sigma \to \Sigma$ inclusion.  We begin by assuming that the surface $\Sigma$ is characteristic.

Let $q_\Sigma : H_1(\Sigma;\mathbb{Z}/2\mathbb{Z}) \to \mathbb{Z}/2\mathbb{Z}$ denote Rochlin's quadratic form. Then $i_{\Sigma}^*q_\Sigma = \theta_{F_\Sigma}$ (Proposition \ref{Rochlinislink}). Note that $\theta_{F_\Sigma}$ is always defined. In general, the failure of $\theta_{F_\Sigma}$ to extend over $H_1(\Sigma;\mathbb{Z}/2\mathbb{Z})$ is a witness for the failure of $\Sigma$ to be characteristic. This is ultimately the source of flexibility (we refer the reader to Figure \ref{slide} which constructs a flexible representative for $2[\mathbb{C}P^1] \in H_2(\mathbb{C}P^2;\mathbb{Z}))$. 

According to Calderon-Salter \cite[Proposition 5.1]{CalderonSalter}, if the genus of $\Sigma$ is at least 3, the stabilizer \[\Mod(\Sigma)[q_\Sigma] = \mathcal{T}_{q_\Sigma} := \langle T_{\gamma} \,|\, \gamma \subset \Sigma \textrm{ nonseparating simple closed curve}, q_\Sigma(\gamma) = 1 \rangle.\] To prove Theorem \ref{thmA} we construct a representative $\Sigma$ for which $\mathcal{T}_{q_\Sigma} \leq \mathcal{E}(M,\Sigma)$. A consequence of this approach is that one must deal with \textit{arbitrary} curves. This is done in two steps. 

First we establish a lower bound for $\mathcal{E}(M,\Sigma)$ which supports a \textit{change-of-coordinates} principle\footnote{Change-of-coordinates refers to a family of results allowing one to exchange configurations of curves on a surface through diffeomorphisms of the surface provided the configurations have the same `combinatorics'. }. It is the image of a \textit{framed} mapping class group (Definition \ref{framedmcg}). Second, we stabilize with an unknotted surface in order to manufacture more extendable mapping classes.

Precisely, let $S = F_\Sigma \# \Sigma_g \subset S^3$, where $\Sigma_g \subset S^3$ is an unknotted surface of genus $g$. Capping $S$ off with $D_\Sigma$ yields a representative $Z = S \cup D_\Sigma$. We prove that for $g \geq 5$ and $\gamma \subset S$ a nonseparating simple closed curve with \[\theta_S(\gamma) = \mathrm{lk}(\gamma,\gamma^+) =  1 \mod 2,\] one has $T_{\gamma} \in \mathcal{E}(M,Z)$ (Lemma \ref{twistson1curves}). As $Z\backslash S = D_\Sigma$ is a collection of disks and $i_Z^*q_Z = \theta_S$, it follows that \[\mathcal{T}_{q_Z} \leq \mathcal{E}(M,Z).\]

Assume now that $x$ is not a characteristic element. Lemma \ref{twistson1curves} still applies, needing only the pair $(B^4,S)$. To establish flexibility, it is enough to prove that for any nonseparating simple closed curve $\delta \subset S$ with \[\theta_S(\delta) = \mathrm{lk}(\delta,\delta^+) = 0 \mod 2\] one has $T_{\delta} \in \mathcal{E}(M,Z)$. We show that isotoping $\delta$ across a disk in $D_\Sigma$ yields a curve $\widetilde{\delta}$ on $S$ with $\theta_S(\widetilde{\delta}) = 1 \mod 2$ (Proposition \ref{geometricwitness}). By Lemma \ref{twistson1curves},  $T_{\widetilde{\delta}} \in \mathcal{E}(M,Z)$. On $Z$, the curves $\delta$ and $\widetilde{\delta}$ are isotopic and so $T_{\delta} \in \mathcal{E}(M,Z)$. As $\Mod(Z)$ is generated by Dehn twists on nonseparating simple closed curves, $Z$ is flexible. \newline

\textbf{Outline of the paper.} In Section 2 we recall foundational material regarding extendable mapping classes. We do so in detail for we conclude the section by showing that the methods of Hirose established in \cite{HiroseS^4} prove spin-flexibility for properly embedded unknots in the 4-ball. In Section 3 we recall relevant background regarding \textit{framed} and \textit{spin} mapping class groups. In Section 4 we prove Theorem \ref{thmA}. Starting with a family of framed Seifert surfaces for homogeneous braid closures studied in \cite{Ferretti}, we use techniques from Section 3 to establish, after stabilizing with an unknot of sufficiently large genus, that the Dehn twist on any nonseparating simple closed curve with odd self-linking number is an extendable mapping class (Lemma \ref{twistson1curves}). Thereafter we readily construct (spin) flexible representatives in $\mathbb{C}P^2$ and $S^2 \times S^2$. Flexibility for non-characteristic classes is a consequence of Proposition \ref{geometricwitness}. In Section 5 we prove Theorem \ref{ThmB}. Methods from Section 4 are adapted to the case of a \textit{single} stabilization, using the \textit{geometric monodromy} associated to a positive braid identified in \cite{Ferretti}. \newline

\textbf{Conventions and notation.} We work in the smooth category throughout. All manifolds are tacitly oriented, and all submanifolds are embedded. For a simple closed curve $\gamma$ on a surface $\Sigma$, denote the left handed Dehn twist along $\gamma$ by $T_{\gamma} :\Sigma \to \Sigma$. We conflate curves on surfaces, as well as diffeomorphisms, with their associated isotopy classes. For a pair of curves $\alpha,\beta \subset \Sigma$, we denote their geometric intersection number by $i(\alpha,\beta)$. Given a manifold $M$ and subspace $A \subset M$, let $\textrm{Diff}_\partial(M,A)$ denote diffeomorphisms of $M$ fixing $A$ \textit{setwise}, and restricting to the identity on $\partial M$. For a submanifold $S \subset M$, we denote the normal bundle of $S$ in $M$ by $\nu_{S\subset M}$, implicitly identified with an open neighborhood of $S$ in $M$. \newline

\textbf{Acknowledgements.} I thank \.{I}nan\c{c} Baykur for useful conversations regarding the extendable subgroup. I thank my advisor Nick Salter for extensive feedback and discussion, as well as his generosity in both of these matters. I would also like to thank Aru Mukherjea for comments on an early draft.

\section{Extension Methods and Rochlin's Quadratic Form}

We recall the principal techniques for extending mapping classes of surfaces in 4-manifolds and the obstructions that arise in doing so. We conclude by noting that properly embedded unknots in the 4-ball are spin-flexible rel boundary (Proposition \ref{relboundaryhirose}). \newline

\textbf{Extendable subgroups.} Let $(\Sigma^2,\partial \Sigma) \subset (M^4,\partial M)$ be a properly embedded oriented compact surface in a oriented 4-manifold $M$. Let $\Mod(M,\Sigma) = \pi_0(\textrm{Diff}^+_{\partial}(M,\Sigma))$ be the smooth \textit{relative mapping class group} of the pair $(M,\Sigma)$. Restriction determines a homomorphism \[\mathcal{R}: \Mod(M,\Sigma) \to \Mod^{\pm}(\Sigma) = \pi_0(\textrm{Diff}_\partial(\Sigma,\partial\Sigma)).\]

\begin{defn} Define $\mathcal{E}_{\partial}(M,\Sigma)$ to be the image of $\mathcal{R}$ in $\Mod(\Sigma) = \pi_0(\textrm{Diff}^+_\partial(\Sigma,\partial\Sigma)) \leq \Mod^{\pm}(\Sigma)$. 
\end{defn}

\begin{rmk}
Several calculations of $\mathcal{E}(M,\Sigma)$ in the literature (\cite{HiroseCP^2}, \cite{HiroseS^4}) construct extensions of mapping classes by isotopies of $\Sigma$ in $M$. In particular they actually compute the \textit{smooth monodromy group} of $\Sigma$ in $M$. The same is done throughout this paper. In the presence of boundary all isotopies are rel boundary. 
\end{rmk}

\begin{rmk}
That the target of restriction $\mathcal{R} : \Mod(M,\Sigma) \to \Mod^\pm(\Sigma)$ is the \textit{extended} mapping class group is a feature. For example, complex conjugation on $\mathbb{C}P^2$ yields an orientation reversing diffeomorphism on a smooth plane curve (defined by a homogeneous polynomial with real coefficients). Of course, complex conjugation is still an orientation preserving diffeomorphism of $\mathbb{C}P^2$. On the other hand, any isotopy of a surface in $M$ must (after isotopy extension) preserve orientation and act trivially on the homology of $M$. 
\end{rmk}

\subsection{Rochlin's quadratic form.} A \textit{membrane} for the pair $(M,\Sigma)$ is an embedded oriented surface $F \subset \textrm{int}(M)$, with $\partial F \subset \Sigma$ a simple closed curve and interior transverse to $\Sigma$. If $H_1(M;\mathbb{Z}) = 0$ any curve on $\Sigma$ has a membrane. In extracting obstructions to extending mapping classes, a guiding principle is that invariants associated to membranes must be preserved by $\mathcal{E}_{\partial}(M,\Sigma)$ (this avenue is pursued in \cite{SlopeGenera}, for example, by recording the minimum genus of amongst membranes along a simple closed curve). 

Given a membrane $F$ for the pair $(M,\Sigma)$, the \textit{relative Euler number} $e_{\Sigma}(F) \in \mathbb{Z}$ is defined as follows. Up to isotopy, we may assume that $F$ meets $\Sigma$ normally along $\partial F$. Let $\nu_F \to F$ denote the normal bundle of $F$ in $M$. As $F$ is homotopy equivalent to a wedge of circles and $\nu_F$ is an orientable 2-plane bundle, there exists a trivialization $\nu_F \cong F \times \mathbb{R}^2$. The image of a nowhere vanishing section of the normal bundle $\nu_{\partial F \subset \Sigma} $ sits inside $\nu_F$ and the trivialization then determines a smooth map $\partial F \to \mathbb{R}^2 - \{0\}$. The degree of this map is the relative Euler number $e_{\Sigma}(F) \in \mathbb{Z}$. We remind the reader that the relative Euler number itself is not an invariant of $\partial F$ by the \textit{spinning} construction, which changes $e_{\Sigma}(F)$ at the expense of creating intersections between $\textrm{int}(F)$ and $\Sigma$ (see \cite{FreedmanKirby}). 

We record the presence of Rochlin's form in the \textit{rel boundary} setting.  Recall that a \textit{quadratic form} $q : H_1(\Sigma;\mathbb{Z}/2\mathbb{Z}) \to \mathbb{Z}/2\mathbb{Z}$ is a function satisfying 
\[ 	q(x+y) = q(x)+q(y) + \langle x,y\rangle \]
for all $x,y \in H_1(\Sigma;\mathbb{Z}/2\mathbb{Z})$. Here $\langle -,-\rangle$ denotes the mod 2 intersection pairing, arising from the homomorphism $H_1(\Sigma;\mathbb{Z}/2\mathbb{Z}) \to H_1(\Sigma,\partial \Sigma;\mathbb{Z}/2\mathbb{Z})$ induced by inclusion and Poincar\'{e} duality.
The mapping class group $\Mod(\Sigma)$ acts on the set of quadratic forms on $H_1(\Sigma;\mathbb{Z}/2\mathbb{Z})$ by pullback $(f\cdot q)(x) := q(f^{-1}_*(x))$.

\begin{defn}
Let $q : H_1(\Sigma;\mathbb{Z}/2\mathbb{Z}) \to \mathbb{Z}/2\mathbb{Z}$ be a quadratic form. The stabilizer of $q$, denoted $\Mod(\Sigma)[q]$, is
\[ \Mod(\Sigma)[q] = \{f \in \Mod(\Sigma)\,|\, (f\cdot q) = q\}. \]
\end{defn}

\begin{proposition} \label{RochlinsForm}
Suppose that $H_1(M;\mathbb{Z}) = 0$ and  $\Sigma \subset M$ is characteristic, that is, $[\Sigma,\partial\Sigma] \in H_2(M,\partial M;\mathbb{Z}/2\mathbb{Z})$ is dual to $w_2(M) \in H^2(M;\mathbb{Z}/2\mathbb{Z})$ under Lefschetz duality. Then \textit{Rochlin's quadratic form}  $q_{\Sigma} : H_1(\Sigma;\mathbb{Z}/2\mathbb{Z}) \to \mathbb{Z}/2\mathbb{Z}$  is defined and \[\mathcal{E}(M,\Sigma) \leq \Mod(\Sigma)[q_{\Sigma}].\]
\end{proposition}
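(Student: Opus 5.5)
We define $q_\Sigma$ via membranes and then show it is well defined using the characteristic hypothesis. Given a simple closed curve $\gamma \subset \Sigma$, the hypothesis $H_1(M;\mathbb{Z})=0$ provides a membrane $F$ with $\partial F = \gamma$; we may take $F$ meeting $\Sigma$ normally along $\gamma$ and transversally in its interior. Set
\[ q_\Sigma(\gamma) := e_\Sigma(F) + |\mathrm{int}(F)\cap \Sigma| \mod 2. \]
The plan has four steps.

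\textbf{Step 1: independence of the membrane.} Let $F, F'$ be two membranes for $\gamma$. Glue $F$ to $-F'$ along $\gamma$, after pushing $F'$ slightly off along the normal direction to $\gamma$ in $\Sigma$ used to compute the relative Euler numbers. This produces a closed surface $C$, possibly immersed, and one can arrange it to be embedded after small perturbation. For its mod 2 self-intersection we have
\[ C\cdot C \equiv e_\Sigma(F) - e_\Sigma(F') \pmod 2, \]
since the relative Euler numbers measure exactly the twisting of the pushoff. Moreover $C\cdot\Sigma$ counts the interior intersections of $F$ and $F'$ with $\Sigma$, together with a contribution from $\gamma$ that the pushoff cancels. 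Because $\Sigma$ is characteristic, i.e.\ dual to $w_2$ in the relative sense, Wu's formula gives $C\cdot C \equiv C\cdot\Sigma \pmod 2$. The closed class $C$ lies in the interior, so it pairs with $[\Sigma,\partial\Sigma]$ via Lefschetz duality and the formula applies. Hence the two definitions agree mod 2. The same argument, applied to an isotopy of $\gamma$ traced out as an annulus attached to a membrane, shows that $q_\Sigma(\gamma)$ depends only on the isotopy class of $\gamma$.

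\textbf{Step 2: dependence only on the homology class and the quadratic property.} For curves $\gamma,\delta$ meeting transversally, resolve the intersections to obtain a curve or multicurve representing $\gamma+\delta$. Its membrane is built from $F_\gamma \cup F_\delta$ together with small bands or squares at each intersection point. Each resolution contributes $1$ to the relative Euler count mod 2, which yields
\[ q(\gamma+\delta) = q(\gamma)+q(\delta)+\langle\gamma,\delta\rangle. \]
For multicurves we extend $q$ additively over components, noting that the components' membranes may intersect each other, which does not affect the counts involving $\Sigma$. To see that $q$ descends to $H_1(\Sigma;\mathbb{Z}/2\mathbb{Z})$, observe that a null-homologous mod 2 multicurve bounds a subsurface $P\subset\Sigma$. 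Pushing $P$ slightly off $\Sigma$ gives a membrane with $e_\Sigma = 0$ and no interior intersections, so $q$ vanishes on it. Together with the quadratic identity this makes $q$ well defined on homology. Dealing with boundary-parallel curves in the relative setting needs no change.

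\textbf{Step 3: invariance under extendable mapping classes.} Let $\Phi\in \mathrm{Diff}^+_\partial(M,\Sigma)$ restrict to $f$ on $\Sigma$. Then $\Phi(F)$ is a membrane for $f(\gamma)$. Relative Euler numbers are preserved because $\Phi$ carries normal bundles and sections to normal bundles and sections; orientation is only relevant mod 2, so this holds regardless. Intersection counts with $\Sigma$ are also preserved. Therefore $q(f\gamma)=q(\gamma)$, which gives $\mathcal{E}(M,\Sigma)\le \Mod(\Sigma)[q_\Sigma]$. If $f$ reverses the orientation of $\Sigma$, the same argument still shows it stabilizes $q$, but such $f$ lie outside $\Mod(\Sigma)$ and are not needed.

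\textbf{Main obstacle.} The hardest step is Step 1, and specifically carrying out the bookkeeping in the gluing carefully. One must check that the mod 2 self-intersection of the glued surface equals $e_\Sigma(F)-e_\Sigma(F')$, that the characteristic condition applies in the relative setting with $\partial M\neq\emptyset$, and that the correction term from the pushoff near $\gamma$ cancels. I would first handle the case where $F$ and $F'$ coincide near $\gamma$, and then compare arbitrary pairs by a local model.
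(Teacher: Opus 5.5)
Your proposal is correct and takes the same route as the paper. You define $q_\Sigma(\gamma)=e_\Sigma(F)+|\mathrm{int}(F)\cap\Sigma| \bmod 2$ via a membrane, use the characteristic hypothesis (Wu's formula on a closed surface assembled from two membranes) to get independence of the choice, and transport membranes by diffeomorphisms of $(M,\Sigma)$; your Steps 1--2 spell out what the paper delegates to Klug's Lemma 2, and Step 1 is only a sketch until its gluing is done as follows. The double points of the glued surface $C$ cannot be perturbed away, but they need not be, since each contributes $2$ to $C\cdot C$; the gluing you describe does not close up as written, and the clean closed surface is $F\cup A\cup F'^{+}$, with $A$ the thin annulus in $\Sigma$ between $\gamma$ and its push-off, after boundary twists make the outgoing normal directions of $F$ and $F'$ agree.
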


\begin{proof}
We briefly recall some of the details, referring the reader to \cite[Lemma 2]{Klug}. For a simple closed curve $\gamma \subset \Sigma$, pick a membrane $F$ with $\partial F = \gamma$. Then \[q_{\Sigma}(\gamma) = |\textrm{int}(F) \cap \Sigma| + e_{\Sigma}(F) \mod 2.\] The condition that $\Sigma$ be characteristic in $M$ ensures this is independent of the choice of membrane for $\gamma$. Transporting membranes with a diffeomorphism $(M,\Sigma) \to (M,\Sigma)$ shows that $\mathcal{E}(M,\Sigma) \leq \Mod(\Sigma)[q_{\Sigma}]$.
\end{proof}

\begin{defn}
We say that $\Sigma \subset M$ is \textit{flexible rel boundary} if $\mathcal{E}_{\partial}(M,\Sigma) = \Mod(\Sigma)$. A characteristic surface $\Sigma \subset M$ is  \textit{spin flexible rel boundary} if $\mathcal{E}_{\partial}(M,\Sigma) = \Mod(\Sigma)[q_\Sigma]$.
\end{defn}

\textbf{The Arf invariant}. Provided the intersection pairing on $H_1(\Sigma;\mathbb{Z}/2\mathbb{Z})$ is nondegenerate, quadratic forms on $H_1(\Sigma;\mathbb{Z}/2\mathbb{Z})$ are in the same orbit under $\Mod(\Sigma)$ if and only if they have the same \textit{Arf invariant}, a $\mathbb{Z}/2\mathbb{Z}$-valued invariant of a quadratic form (see \cite{Randal-Williams} for details). Quadratic forms with Arf invariant one (resp. zero) are called \textit{odd} (resp. \textit{even}). In the closed setting the Arf invariant captures ambient topology as follows. 

\begin{theorem}[Freedman-Kirby \cite{FreedmanKirby}] \label{ArfRochlin}
Assume that $M$ is closed, simply connected and that $\Sigma \subset M$ is characteristic. Then 
\[ \mathrm{Arf}(q_\Sigma) = \frac{1}{8}(\sigma(M) - \Sigma\cdot\Sigma) \mod 2,\]
where $\sigma(M)$ denotes the signature of $M$, and $\Sigma\cdot \Sigma$ the self-intersection of $\Sigma$ in $M$. 
\end{theorem}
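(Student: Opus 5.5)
The plan follows the Freedman--Kirby argument, which reduces the formula to Rochlin's theorem for a spin manifold by surgering out a characteristic surface. First we simplify $\Sigma$ within its $\mathbb{Z}$-homology class while keeping it characteristic. Since $M$ is simply connected, we may add tubes and embedded spheres so that $\Sigma$ becomes connected. This is allowed because both sides of the formula behave predictably: tubing changes neither $[\Sigma]$ nor $\Sigma\cdot\Sigma$, and $q_\Sigma$ changes by an orthogonal sum with a hyperbolic (Arf $0$) piece. More precisely, stabilizing $\Sigma$ inside a ball by an unknotted torus adds a summand with $q=0$ on both generators, so the Arf invariant is unchanged. The order is then:
\begin{enumerate}
\item Pick a symplectic basis $\{a_i,b_i\}$ of $H_1(\Sigma;\mathbb{Z}/2\mathbb{Z})$, so that $\mathrm{Arf}(q_\Sigma)=\sum_i q_\Sigma(a_i)q_\Sigma(b_i)$.
\item Choose membranes $F_{a_i},F_{b_i}$. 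Using boundary twisting (spinning) and finger moves, which change $e_\Sigma$ and $|\mathrm{int}(F)\cap\Sigma|$ by the same parity, we make the membranes framed (i.e.\ $e_\Sigma(F)=0$) and disjoint from $\Sigma$ away from their boundary. The value $q_\Sigma(\gamma)$ is then recorded entirely by the parity of $\mathrm{int}(F)\cap\Sigma$ before cleanup, or equivalently by the parity of the framing obstruction after cleanup.
\item Ambiently surger $\Sigma$ along the membranes $F_{a_i}$ with $q_\Sigma(a_i)=0$ (or along $F_{b_i}$ when $q_\Sigma(a_i)\ne0$). This lowers the genus and reduces us to the case in which every pair $(a_i,b_i)$ has $q=1$ on both curves.
\end{enumerate}

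Second, in the reduced situation we exploit the double branched cover, or more directly the Kervaire--Milnor/Rochlin-type computation. Take $W=M\setminus\nu_\Sigma$. Since $\Sigma$ is characteristic, $W$ is spin, but its spin structure does not extend across $\nu_\Sigma$. We then glue in a standard piece: the complement of $\Sigma$ in a suitable neighbourhood is replaced by a spin 4-manifold $P$ with the same boundary circle bundle over $\Sigma$. Concretely, $P$ is a plumbing/handlebody whose spin structure restricts on $\partial\nu_\Sigma$ to the one determined by $q_\Sigma$. Applying Rochlin's theorem ($\sigma\equiv0\bmod16$) to the closed spin manifold $W\cup P$, and using Novikov additivity,
\[
\sigma(M)=\sigma(W)+\mathrm{sign}(\Sigma\cdot\Sigma),\qquad \sigma(W\cup P)=\sigma(W)+\sigma(P)\equiv 0 \bmod 16,
\]
we obtain $\sigma(M)-\Sigma\cdot\Sigma\equiv \sigma(P)-\mathrm{sign}+{}$correction terms $\bmod 16$. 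A direct computation for the standard pieces then gives $8\,\mathrm{Arf}(q_\Sigma)$, since an odd torus contributes the $E_8$-type defect $8$. Equivalently, and perhaps cleaner, one can appeal to the identification $\mathrm{Arf}(q_\Sigma)=$ the Kervaire--Arf invariant of the pair, together with the characteristic-surface Rochlin congruence.

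The main obstacle is the second step: building the spin filling $P$ and verifying that the spin structure it induces on the boundary circle bundle is exactly the one encoded by $q_\Sigma$. The key check is that $q_\Sigma(\gamma)$, defined via membranes as in Proposition \ref{RochlinsForm}, agrees with the obstruction to extending the spin structure of $W$ over the lift of $\gamma$ to $\partial\nu_\Sigma$. I would try first to verify this curve by curve, using a framed membrane $F$ pushed into $W$. The spin structure on $W$ restricted to $\partial F$ is bounding or nonbounding according to $e_\Sigma(F)+|\mathrm{int}(F)\cap\Sigma|$ mod 2, and this matches the membrane formula for $q_\Sigma(\gamma)$. Once that matching is established, the signature bookkeeping is routine.
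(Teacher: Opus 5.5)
The paper gives no proof of this statement; it is quoted from Freedman--Kirby. Your proposal therefore has to stand on its own, and as written it does not. The gap is in your second step, which is where all of the content lies.

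A spin filling $P$ of $Y=\partial\nu_\Sigma$ extending the spin structure of $W$ exists for free, since $\Omega_3^{\mathrm{spin}}=0$. Identifying that boundary spin structure with $q_\Sigma$ is a standard lemma (essentially an equivalent definition of $q_\Sigma$). So what you call the main obstacle is the easy part. The hard part is $\sigma(P)\bmod 16$, i.e.\ the Rochlin invariant $\mu(Y,\mathfrak{s})$. Your own bookkeeping, with the sign corrected ($\sigma(M)-e\equiv-\sigma(P)+\mathrm{sign}(e)-e$, where $e=\Sigma\cdot\Sigma$), shows that the theorem is \emph{equivalent} to
\[\mu(Y,\mathfrak{s})\equiv \mathrm{sign}(e)-e+8\,\mathrm{Arf}(q_\Sigma)\pmod{16}.\]
So the ``routine bookkeeping'' is the whole theorem, and the proposal never carries it out. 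Specifically:
\begin{enumerate}
\item The claim that an odd torus contributes an ``$E_8$-type defect $8$'' is asserted, not shown. For $e=0$ it is the fact that $T^3$ with its Lie spin structure has $\mu=8$, which itself needs proof.
\item Your reduction still leaves arbitrary genus and arbitrary $e$, so there is no finite list of ``standard pieces.''
\item The fallback of appealing to ``the characteristic-surface Rochlin congruence'' is circular, since that congruence is the statement being proved.
\end{enumerate}

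Your first step is the better route, and finishing it makes $P$ unnecessary, but it needs repair in three places.
\begin{enumerate}
\item A membrane can be made framed with interior disjoint from $\Sigma$ only when $q_\Sigma(\gamma)=0$, because $e_\Sigma(F)+|\mathrm{int}(F)\cap\Sigma| \bmod 2$ \emph{is} $q_\Sigma(\gamma)$.
\item Ambient surgery along a membrane of genus $h$ produces a surface of genus $g-1+2h$, so you need embedded framed disks. The hypothesis $\pi_1(M)=0$ only provides immersed ones. This can be arranged by blowing up: tubing $\Sigma$ into exceptional spheres, or taking proper transforms, keeps it characteristic and changes neither $\sigma(M)-\Sigma\cdot\Sigma$ nor $q_\Sigma$.
\item Do not stop at ``every pair odd.'' If $q$ is $1$ on $a_1,b_1,a_2,b_2$, then $q(a_1+a_2)=0$, so you can reach genus at most $1$.
\end{enumerate}

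Once at genus at most $1$, the remaining cases are direct:
\begin{enumerate}
\item For a characteristic sphere, sum with copies of $(\mathbb{C}P^2,\mathbb{C}P^1)$ or $(\overline{\mathbb{C}P^2},\overline{\mathbb{C}P^1})$ until $\Sigma\cdot\Sigma=\pm1$. Each such sum preserves both sides of the formula. Then $M\cong N\#\mathbb{C}P^2$ or $N\#\overline{\mathbb{C}P^2}$ with $N$ spin, and Rochlin's theorem for $N$ gives $\sigma(M)-\Sigma\cdot\Sigma\equiv 0$.
\item For a torus with Arf invariant $1$, sum with $(\mathbb{C}P^2,X_3)$. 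The assemblage $\mathcal{C}_{F_{\beta_3}}$ contains a geometrically dual pair of Hopf band curves, which form a symplectic basis of $H_1(X_3;\mathbb{Z}/2\mathbb{Z})$. By Proposition~\ref{Rochlinislink}, $q=1$ on both, so $\mathrm{Arf}(q_{X_3})=1$, while $\sigma(\mathbb{C}P^2)-X_3\cdot X_3=-8$. The sum has Arf invariant $0$ and reduces to the sphere case. This gives $\sigma(M)-\Sigma\cdot\Sigma\equiv 8\pmod{16}$.
\end{enumerate}
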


\begin{rmk}
Whilst we work exclusively in the smooth category, we remark that Rochlin's form can be defined in the topologically locally flat category (see \cite{Klug} for discussion on this matter). 
\end{rmk}

\textbf{Rochlin's form in 3$\frac{1}{2}$ dimensions}. We record a well known perspective on Rochlin's form for surfaces which interpolate between 3 and 4 dimensions. Recall that, for a Seifert surface $S \subset S^3 = \partial B^4$, there is the self-linking form $\theta_S: H_1(S;\mathbb{Z}/2\mathbb{Z}) \to \mathbb{Z}/2\mathbb{Z}$, defined on primitive elements by \[\theta_S(\gamma) = \mathrm{lk}(\gamma,\gamma^+) \mod 2\] where $\gamma^+$ denotes a positive push-off of a curve $\gamma \subset S$, and $\mathrm{lk}$ denotes the linking number of the 1-cycles in $S^3$. 

\begin{proposition}[Page 117 of \cite{Saveliev}]
Let $\Sigma^2 \subset M^4$ be a characteristic surface, $H_1(M;\mathbb{Z}) = 0$ and $B^4 \subset M$ an embedded 4-ball. Assume that $ \Sigma \cap B^4 = \Sigma \cap \partial B^4 = S$ a Seifert surface for a link. Let $i : S \to \Sigma$ be inclusion and $\theta_S$ the linking form on $S$. Then $i^*q_{\Sigma} = \theta_S$ where $q_\Sigma : H_1(\Sigma;\mathbb{Z}/2\mathbb{Z}) \to \mathbb{Z}/2\mathbb{Z}$ denotes Rochlin's form. 
\label{Rochlinislink}
\end{proposition}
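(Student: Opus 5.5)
The plan is to compute $q_\Sigma$ on the image of a simple closed curve $\gamma \subset S$ using a membrane built inside $B^4$. Since $H_1(S;\mathbb{Z}/2\mathbb{Z})$ is generated by classes of simple closed curves, and both $i^*q_\Sigma$ and $\theta_S$ are quadratic refinements of the same intersection pairing, it suffices to check equality on simple closed curves. Here we use that $i^*$ preserves intersection numbers on $S$, so $i^*q_\Sigma$ is quadratic with respect to the intersection form of $S$.

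Fix $\gamma\subset S$ and let $\gamma^+$ be its positive push-off in $S^3=\partial B^4$ along the normal direction of $S$. Push the interior of $B^4$ slightly so that a collar $S^3\times[0,1]$ sits inside. Take a Seifert surface $G\subset S^3$ for $\gamma$ in general position with respect to $S$. Push the interior of $G$ radially into $B^4$, say as the cone/collar construction: $F = (\gamma\times[0,\tfrac12]) \cup (G\times\{\tfrac12\})$ pushed into the interior. This is adjusted so that $F$ leaves $\gamma$ in a direction normal to $S^3$, hence normal to $S$ within $T\Sigma$? That last point is not correct: the inward normal of $S^3$ is not tangent to $\Sigma$. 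Since $\Sigma\cap B^4=S$, the surface $\Sigma$ near $\gamma$ consists of $S$ in $S^3$ together with pieces outside $B^4$. So $F$ leaves $\gamma$ transverse to $\Sigma$ in the wrong sense, and we must fix the boundary condition.

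I would instead take the membrane to start at $\gamma$ tangent to $S^3$ but normal to $S$, namely the annulus between $\gamma$ and $\gamma^+$ in $S^3$. It then travels into the interior along $\gamma^+\times[0,\epsilon]$ and is capped off by the pushed-in Seifert surface $G^+$ of $\gamma^+$. After this, $\mathrm{int}(F)\subset \mathrm{int}(B^4)$ away from a thin annulus, so $\mathrm{int}(F)\cap\Sigma=\emptyset$, provided the annulus is taken thin and disjoint from $S$ except along $\gamma$.

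It remains to compute $e_\Sigma(F)$. The section used is the normal to $\gamma$ within $S$, which points tangentially along $S$. Compare it to the trivialization of $\nu_F$. That trivialization is induced on $\gamma^+\times\{\epsilon\}$ by the Seifert framing of $G^+$ (the pushed-in Seifert surface has a trivial normal bundle framed by its Seifert framing in $S^3$ plus the radial direction). The tangential-to-$S$ normal of $\gamma$, transported to $\gamma^+$, is the surface framing of $S$ along $\gamma$. Its winding relative to the Seifert framing is therefore the framing difference, $\mathrm{lk}(\gamma,\gamma^+)$. Hence $q_\Sigma(\gamma)\equiv 0+\mathrm{lk}(\gamma,\gamma^+)=\theta_S(\gamma)$ mod 2, using the formula from Proposition \ref{RochlinsForm}, which is well-defined independent of membrane because $\Sigma$ is characteristic.

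The main obstacle is the careful local model at $\gamma$: making the membrane meet $\Sigma$ normally along its boundary and verifying that the relative Euler number equals the linking number, with the correct identification of framings. Only parity matters, so sign conventions can be ignored.
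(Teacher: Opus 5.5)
Your argument is correct and is the standard computation behind the Saveliev citation, which the paper gives no proof of its own beyond: take a Seifert surface for the curve in $S^3$ and push it into $B^4$ to get a membrane. Its interior then misses $\Sigma$, so $q_\Sigma(\gamma)$ reduces to the relative Euler number, and that is the framing difference $\mathrm{lk}(\gamma,\gamma^+)$.

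One correction to your reasoning: your first, radial-collar membrane was already valid, so the annulus detour is harmless but unnecessary. Along $\gamma\subset\mathrm{int}(S)$ one has $T\Sigma=TS\subset TS^3$, so the inward radial direction is normal to $\Sigma$, which is exactly the boundary condition a membrane needs.
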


The above is a statement of equality on the nose. We record an instance in which the linking form and Rochlin's form, whilst not defined on the same surface, may still be identified. 

\begin{proposition} \label{pullbackRochlin}
For $S \subset S^3 = \partial B^4$ a Seifert surface, let $S'$ denote the result of pushing the interior of $S$ into the interior of $B^4$. The surface $(S',\partial S') \subset (B^4,S^3)$ is characteristic and Rochlin's form $q_{S'}$ is defined on $S'$. The isotopy used to push the surface into $B^4$ yields an identification $S' \cong S$, and the pullback of $\theta_S$ to $S'$ coincides with Rochlin's form $q_{S'}$. \end{proposition}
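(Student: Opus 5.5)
The plan has three parts: check that $S'$ is characteristic, make sense of Rochlin's form in the relative setting, and compute $q_{S'}$ with explicit membranes that can be compared with the linking form.

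\emph{Characteristic and well defined.} The 4-ball is spin, so $w_2(B^4)=0$, and $H_2(B^4,S^3;\mathbb{Z}/2\mathbb{Z})\cong H^2(B^4;\mathbb{Z}/2\mathbb{Z})=0$. Hence $[S',\partial S']=0$ is Lefschetz dual to $w_2(B^4)=0$, so $S'$ is characteristic. Since $H_1(B^4;\mathbb{Z})=0$, every curve on $S'$ bounds a membrane, and the proof of Proposition \ref{RochlinsForm} (via \cite[Lemma 2]{Klug}) applies verbatim in the relative setting. Concretely, two membranes $F_1,F_2$ for the same curve glue along the curve, after a small perturbation, to a closed surface in $\mathrm{int}(B^4)$. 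The characteristic condition then says the parity $|\mathrm{int}(F)\cap S'|+e_{S'}(F)$ does not depend on $F$. Thus $q_{S'}$ is defined.

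\emph{Computation.} Choose the push-in concretely. Use a collar $S^3\times[0,1]\subset B^4$ with radial coordinate $t$, and let $S'$ be $\partial S\times[0,\epsilon]\cup \mathrm{int}(S)\times\{\epsilon\}$, with corners smoothed. This gives the identification $S'\cong S$. Fix a simple closed curve $\gamma\subset S$ and let $\gamma'$ be its copy on $S'$, at level $\epsilon$.
\begin{enumerate}
\item Push $\gamma$ slightly along the positive normal of $S$ in $S^3$ to get $\gamma^+$, lying at level $\epsilon$ and disjoint from $S'$.
\item Take a Seifert surface $G$ for $\gamma^+$ in $S^3$ and push its interior deeper, to levels $t>\epsilon$. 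The resulting surface $G'$ meets $S'$ only where it passes through the slice $t=\epsilon$ region occupied by $S'$.
\item Connect $\gamma'$ to $\gamma^+$ by the thin annulus $A$ swept out by the normal push-off at level $\epsilon$.
\end{enumerate}
Then $F=A\cup G'$ is a membrane for $\gamma'$, and $A$ leaves $S'$ normally along $\gamma'$.

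To count intersections, isotope $G$ to be transverse to $S$. Choosing the depth profile appropriately, the points of $\mathrm{int}(F)\cap S'$ correspond to points of $G\cap S$ in the interior, while the arcs of $G\cap S$ contribute nothing once $G$ is pushed deeper than $S'$. The cleaner route is to place $G'$ at depths $t>\epsilon$ over all of $\mathrm{int}(G)$. Then $G'$ misses $S'$ entirely, except possibly near $\partial S\times[0,\epsilon]$; those intersections are points of $\partial S\cap G$, and they are counted by $\mathrm{lk}(\partial S,\gamma^+)=0$ with signs. This count is still only a signed count, so parity has to be handled with care, which motivates the alternative below.

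For the relative Euler number, the section of $\nu_{\gamma'\subset S'}$ tangent to $S'$ and normal to $\gamma'$ is compared against the trivialization of $\nu_F$. Along $G'$ that trivialization is the Seifert framing of $\gamma^+$, and the section in question is the push-off of $\gamma$ within $S$. Their relative twisting is $\mathrm{lk}(\gamma,\gamma^+)$ up to sign, so $e_{S'}(F)\equiv \mathrm{lk}(\gamma,\gamma^+)=\theta_S(\gamma) \pmod 2$.

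\emph{Main obstacle.} The hardest part is the bookkeeping in the last step: showing that the interior intersections $\mathrm{int}(F)\cap S'$ contribute an even number and that the Euler number is exactly the framing difference, including the corner region near $\partial S$. My first attempt would avoid intersections entirely. Use a Seifert surface $G$ for $\gamma^+$ that is disjoint from $\partial S$; such a $G$ exists because the relevant linking number vanishes, and one can tube off intersection pairs. Push $G$ deeper than $S'$, so that $\mathrm{int}(F)\cap S'=\emptyset$. The formula then reduces to $q_{S'}(\gamma')=e_{S'}(F)\equiv\theta_S(\gamma)$.

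Since both $q_{S'}$ and $\theta_S$ are quadratic refinements of the same intersection form, it suffices to check that they agree on simple closed curves (primitive classes). This gives $q_{S'}=\theta_S$ under $S'\cong S$. Alternatively, one could cap $B^4$ to $S^4$ and $S'$ to a closed characteristic surface, and then apply Proposition \ref{Rochlinislink} directly.
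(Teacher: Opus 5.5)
Your strategy is the standard computation that the paper's one-line proof points to when it says the argument is ``identical'' to Saveliev's proof of Proposition~\ref{Rochlinislink}, and it works. You build a membrane for $\gamma'$ from the push-off annulus $A$ and a Seifert surface $G$ of $\gamma^+$ pushed deeper into $B^4$. It has no interior intersections with $S'$, and $e_{S'}(F)$ is the twisting of the $S$-framing of $\gamma^+$ against its Seifert framing, i.e.\ $\pm\mathrm{lk}(\gamma,\gamma^+)$. Your alternative is also valid and is essentially the paper's route: double to $S^4$, then apply Proposition~\ref{Rochlinislink} to the smaller ball $\{t\ge\epsilon\}$, which meets the closed surface exactly in $S\times\{\epsilon\}$. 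Membranes in $\mathrm{int}(B^4)$ see only $S'$, so $q_{S'}$ is the restriction of the closed form. The characteristic and well-definedness part is fine, and the sign ambiguity in $e_{S'}(F)$ is irrelevant mod $2$.

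You should discard the ``main obstacle'' paragraph, because it contains a false claim. You cannot in general choose $G$ disjoint from $\partial S$ when $\partial S$ has several components. Only the total $\mathrm{lk}(\gamma^+,\partial S)=\gamma^+\cdot S=0$ vanishes; the linking with each component need not, so there are no cancelling pairs to tube along. For example, let $S$ be a Hopf band and $\gamma$ its core. Each boundary component $K_i$ is isotopic to $\pm\gamma$ inside $S\subset S^3\setminus\gamma^+$, so $\mathrm{lk}(\gamma^+,K_i)=\pm1$, and every surface bounded by $\gamma^+$ meets $K_i$. This is exactly the situation in the paper, where $\partial S$ is a multi-component link such as the $(d,d)$ torus link and $\gamma$ is often a Hopf band curve.

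Neither that claim nor your worry about intersections ``near $\partial S\times[0,\epsilon]$'' is needed. Take
\[
G'=\gamma^+\times[\epsilon,\epsilon+\eta]\ \cup\ \mathrm{int}(G)\times\{\epsilon+\eta\}.
\]
Then $G'\subset S^3\times[\epsilon,\epsilon+\eta]$ and $S'\subset S^3\times[0,\epsilon]$. In the common slice $t=\epsilon$ they are $\gamma^+$ and $S$, which are disjoint. Hence $\mathrm{int}(F)\cap S'=\emptyset$ for an arbitrary Seifert surface $G$ of $\gamma^+$, however $G$ meets $S$ in $S^3$. Drop the discussion of points and arcs of $G\cap S$ too; with these depths there is nothing to count.

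The only corner that matters for the Euler number is at $\gamma^+$, between $A$ and the vertical collar, not near $\partial S$. The unit normal $n_G$ of $G$ in $S^3$ is a nowhere-zero normal section of $F$ over $G'$, vertical collar included. The vector tangent to $S$ and normal to $\gamma$ is a nowhere-zero normal section over $A$ and stays normal through that corner. So $e_{S'}(F)$ is the degree of this vector relative to $n_G$ along $\gamma^+$, which gives $q_{S'}(\gamma')\equiv\mathrm{lk}(\gamma,\gamma^+)=\theta_S(\gamma)$.
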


\begin{proof}
The argument is identical to the proof of Proposition \ref{Rochlinislink} in \cite[Page 117]{Saveliev}. 
\end{proof}

\subsection{Extension methods.} We now turn to examples of elements in $\mathcal{E}_\partial(M,\Sigma)$ for pairs $(M,\Sigma)$. \newline

\textbf{Tube isotopy.} We begin with a quintessential and well-known isotopy of a surface rendering a non-trivial extendable mapping class (\cite{Hirosespunknots}). Consider a standard annulus $F$ in $S^3 \times \{1/2\} \subset S^3 \times [0,1]$ with core curve $c$. Depicted in Figure \ref{squaretwist} is an isotopy of $F$ in $S^3 \times [0,1]$ (rel boundary) which induces the square of a (right-handed) Dehn twist along $c$. \newline 
\begin{figure}
    \centering
    \includegraphics[width=0.9\linewidth]{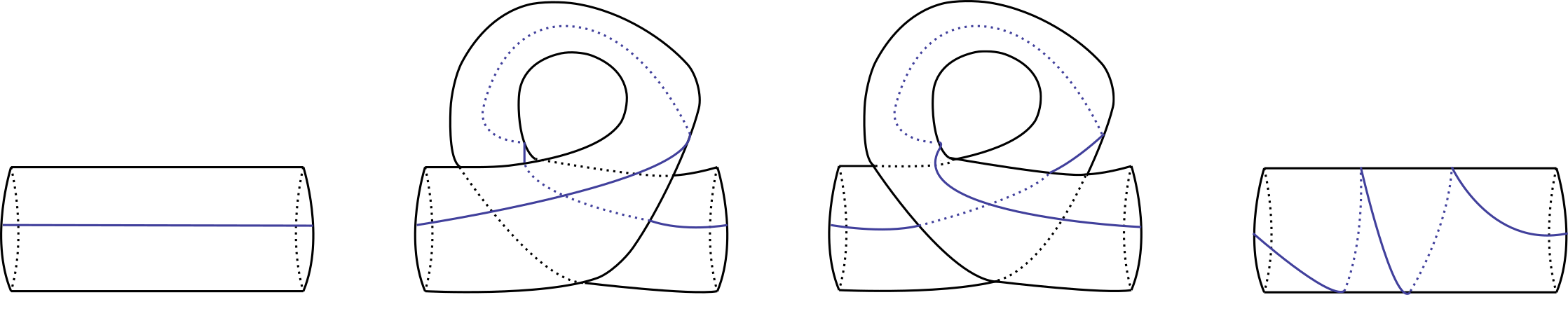}
    \caption{\textnormal{A well-known isotopy of a standard annulus in $S^3 \times I$ rel boundary inducing the square of a (right handed) twist along a core curve. }}
    \label{squaretwist}
\end{figure}

\textbf{Hopf band curves and fibered subsurfaces.} Let $B^4$ denote the closed unit ball in $\mathbb{R}^4$,  $S^3 = \partial B^4$. Let $A$ be an unknotted annulus in $S^3$ with one full positive or negative twist, so $A$ is a positive or negative \textit{Hopf band}. By pushing the interior of $A$ into $B^4$, we obtain a properly embedded surface $A' \subset B^4$. Let $a$ denote the core curve of the annulus $A'$. A key technical result, establish by Hirose in \cite[Proposition 2.1]{HiroseCP^2}, is

\begin{proposition} \label{HiroseHopfBand}
We have $T_a \in \mathcal{E}_{\partial}(B^4,A')$. In fact, $T_a$ is a smooth monodromy element for the pair $(B^4,A')$, that is, exists a diffeomorphism $T \in \mathrm{Diff}_{\partial}^+(B^4) $, smoothly isotopic to the identity rel boundary for which $T\rvert_{A'} = T_a$. 
\end{proposition}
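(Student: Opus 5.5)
The plan is to realize $T_a$ as the monodromy of an explicit loop of embeddings of $A'$ in $B^4$, fixed near $\partial B^4$, and then apply the isotopy extension theorem. The model is the Milnor fibration of $z^2 = w^2$, or equivalently of $f(z,w)=zw$. The positive Hopf band is the fiber surface of the Hopf link, which is the link of the singularity $f=zw$ at the origin. Pushed into $B^4$, it is isotopic rel boundary to the smoothed curve $V_\varepsilon=\{zw=\varepsilon\}\cap B^4$ for small $\varepsilon>0$; this annulus is the Milnor fiber. The core of $V_\varepsilon$ is the vanishing cycle $a$, which is the circle $|z|=|w|=\sqrt{\varepsilon}$.

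\textbf{Step 1.} Consider the family $V_{\varepsilon e^{it}}$ for $t\in[0,2\pi]$. By the Picard--Lefschetz formula, the monodromy around the singular value $0$ is a Dehn twist about the vanishing cycle $a$. Its handedness is fixed by the complex orientation; for the negative Hopf band one uses the conjugate $\bar z w$, or precomposes with an orientation-reversing reflection.

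\textbf{Step 2.} Make the family fixed on the boundary. Near $\partial B^4$ the surfaces $V_{\varepsilon e^{it}}$ move, so I would interpolate radially. Choose a bump function $\rho(r)$ with $\rho\equiv1$ for $r\le 1/2$ and $\rho\equiv 0$ near $r=1$, and replace the equation by
\[
zw=\varepsilon e^{it\rho(|(z,w)|)}.
\]
One must check that this is a smooth embedded annulus for each $t$. Equivalently, and more cleanly, use the explicit diffeomorphism $(z,w)\mapsto (e^{i t\rho}z, w)$. This carries $V_\varepsilon$ to $\{zw=\varepsilon e^{it\rho}\}$, is the identity near the boundary, and is isotopic to the identity through the parameter $t$.

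\textbf{Step 3.} At $t=2\pi$ the map $\Phi(z,w)=(e^{2\pi i\rho(|(z,w)|)}z,w)$ preserves $V_\varepsilon$ setwise. On $V_\varepsilon$, parametrize by $z$, so that $w=\varepsilon/z$. The map then acts as $z\mapsto e^{2\pi i\rho}z$. Here $\rho$ is a function of the radius, which is monotone in $|z|$ on each half of the annulus. So the map rotates the circle $|z|=c$ by angle $2\pi\rho(c)$:
\begin{itemize}
\item on the outer collar near $|z|$ large, $\rho$ goes from $1$ to $0$, giving a full twist;
\item on the collar near $|z|$ small, $\rho$ likewise goes from $0$ up to $1$ and back down to $0$ as $|z|$ decreases towards the other boundary component.
\end{itemize}
This is the main obstacle. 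With this naive $\rho$, the two collars contribute twists that cancel, and the map is isotopic to the identity rel boundary.

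\textbf{Step 4.} The fix is to rotate $z$ and $w$ in opposite directions, $(z,w)\mapsto(e^{is}z,e^{-is}w)$, which preserves $zw$. This is not what we want either. The correct choice is to rotate only $z$, with a cutoff depending on $|z|$ and $|w|$ separately: take $\rho=\rho(|w|)$, equal to $1$ for $|w|$ small and $0$ for $|w|$ near its boundary value. On $V_\varepsilon$, the angle then goes from $0$ at the $w$-large boundary to $1$ at the $z$-large boundary. Hence $\Phi|_{V_\varepsilon}$ is a single Dehn twist on $a$.

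However, $\Phi$ is no longer the identity near the $z$-large boundary component. There it is the rotation $(z,w)\mapsto(e^{2\pi i}z,w)=\mathrm{id}$ at time $2\pi$, so $\Phi$ is the identity at the end, but not during the isotopy. To fix this, I would combine the ambient isotopy $\Phi_t$ with a compensating boundary isotopy: the ambient rotation of $S^3$ along $z\mapsto e^{it}z$ restricted to a collar. This is a loop in $\mathrm{Diff}(S^3)$ that preserves the boundary Hopf link and is trivial in the relevant sense, and it can be undone in the collar $S^3\times[0,1]$. Verifying that the net diffeomorphism is smoothly isotopic to the identity rel boundary and restricts to $T_a$ is the delicate bookkeeping step. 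I would check it by tracking an arc crossing the annulus.
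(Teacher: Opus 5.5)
The paper does not prove this statement itself; it quotes it from Hirose. Your proposal has a genuine gap, and the computations in Steps 3 and 4 are incorrect as written.

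\textbf{The maps in Steps 3 and 4 do not preserve $V_\varepsilon$.} The map $(z,w)\mapsto(e^{2\pi i\rho}z,w)$ multiplies $zw$ by $e^{2\pi i\rho}$, so it carries $V_\varepsilon$ to $\{zw=\varepsilon e^{2\pi i\rho}\}$. It preserves $V_\varepsilon$ only if $\rho$ is integer valued on $V_\varepsilon$. Then $\rho$ is constant on the connected annulus, and the map restricts to the identity there. With either of your cutoffs, $\rho$ takes both values $0$ and $1$ on $V_\varepsilon$, hence every value in between, so $\Phi(V_\varepsilon)\neq V_\varepsilon$. Equivalently, the cut-off family $\{zw=\varepsilon e^{it\rho}\}$ of Step 2 does not close up at $t=2\pi$. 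The map $z\mapsto e^{2\pi i\rho}z$ you wrote in the $z$-parametrization is, ambiently, $(z,w)\mapsto(e^{2\pi i\rho}z,e^{-2\pi i\rho}w)$, not a rotation of $z$ alone. So neither the cancellation in Step 3 nor the single twist in Step 4 is computed for an actual self-map of the annulus.

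\textbf{The boundary correction is the real content, and you defer it.} An honest transport is $\Psi_t(z,w)=(e^{it\kappa}z,e^{it(1-\kappa)}w)$, which carries $V_\varepsilon$ to $V_{\varepsilon e^{it}}$. If $\kappa$ goes from $0$ to $1$ across the annulus, $\Psi_{2\pi}$ preserves $V_\varepsilon$ and restricts to a Dehn twist about $a$. But $\Psi_t$ necessarily moves $\partial B^4$, because $\partial V_{\varepsilon e^{it}}$ moves, and $\Psi_{2\pi}|_{S^3}$ is a torus twist rather than the identity.

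To land in $\mathrm{Diff}_\partial(B^4)$ you must undo this in a collar by an isotopy of $S^3$ preserving $L=\partial V_\varepsilon$. Any motion of $L$ along itself during that isotopy becomes a boundary-parallel twist on the collar cylinder $L\times[0,1]$, i.e.\ a power of $T_a$. The correction must therefore fix $L$ pointwise, using meridional rotations about the components.

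Your proposed correction, $z\mapsto e^{it}z$ near the $z$-large component, fails on two counts. It does not preserve $\{zw=\varepsilon\}\cap S^3$. And in the untilted picture it rotates that component along itself, bringing the twist angle back from $2\pi$ to $0$ and cancelling $T_a$, exactly as in your Step 3.

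\textbf{The missing idea.} Perform the monodromy in a $3$-sphere slice, where the boundary is the binding of an open book and stays fixed throughout. The Hopf band $A$ is a page of the open book on $S^3$ whose binding is the Hopf link. Its monodromy flow gives an isotopy $\varphi_t$ of $S^3$ fixing $\partial A$ pointwise, with $\varphi_1(A)=A$ and $\varphi_1|_A=T_a^{\pm1}$.

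Take $A'=\partial A\times[0,1/2]\cup A\times\{1/2\}$ in a collar $S^3\times[0,1]$. Set $T(x,s)=(\varphi_{\lambda(s)}(x),s)$, where $\lambda$ vanishes near $s=0$ and for $s\ge 3/4$ and equals $1$ near $s=1/2$, and extend by the identity. Then $T$ preserves $A'$ and restricts to $T_a^{\pm1}$. The family $(x,s)\mapsto(\varphi_{u\lambda(s)}(x),s)$, $u\in[0,1]$, is an isotopy to the identity rel boundary. The sign is irrelevant since $\mathcal{E}_\partial(B^4,A')$ is a group.
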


We record the prototypical application of Proposition \ref{HiroseHopfBand}. Let $S \subset S^3$ be a Seifert surface for a link.  
\begin{defn} \label{Hopfbandcurve}
 We call a curve $\gamma \subset S$ a \textit{Hopf band curve} if $\overline{\nu_{\gamma \subset S}}$ is a Hopf band in $S^3$. In particular, $\gamma$ is an unknot in $S^3$ and, after orienting $\gamma$, $\textrm{lk}_S(\gamma,\gamma^+) = \pm 1$ where $\gamma^+$ denotes a positive push-off of $\gamma \subset S$. 
 \end{defn}

 Push the interior of $S$ into $\textrm{int}(B^4)$ to obtain a properly embedded surface $S'$. Explicitly, using a collar $S^3 \times [0,1]$ about $\partial B^4 = S^3 \times \{0\}$, let \[S' = \partial S \times [0,1/2] \cup \textrm{int}(S)\times \{1/2\}\] and $\gamma \subset \textrm{int}(S) \subset S^3 \times \{1/2\}$ be a simple closed curve. Applying Proposition \ref{HiroseHopfBand} yields

 \begin{lemma}
 Suppose that the annulus $A =\overline{\nu_{\gamma \subset S}}$ is a Hopf band in $S^3 \times \{1/2\}$. Let $\psi$ be an isotopy of $B^4$ which pushes the interior of $A$ futher into $\textrm{int}(B^4)$. There exists a diffeomorphism $T: B^4 \to B^4$ for which  $\psi^{-1} \circ T \circ \psi : B^4 \to B^4$ is the identity on the boundary $S^3 \times \{0\}$, $T(S') = S'$ and induces the Dehn twist along a core curve of the annulus $A$. In particular, $T_{\gamma} \in \mathcal{E}_{\partial}(B^4,S').$
\end{lemma}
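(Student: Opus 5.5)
Since $\psi$ only pushes the interior of the Hopf band $A$ deeper into $\textrm{int}(B^4)$, the plan is to move the local model of Proposition \ref{HiroseHopfBand} into a small ball around $A$ and then conjugate back by $\psi$.

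First I would find a small ball adapted to $A$. Recall $S' = \partial S \times [0,1/2] \cup \textrm{int}(S) \times \{1/2\}$, and $A \subset \textrm{int}(S)\times\{1/2\}$. Take a thin collar $S^3 \times [1/2-\epsilon, 1/2+\epsilon]$ and a small 3-ball $D \subset S^3$ containing the Hopf band $A$ with $D \cap S = A$ up to a neighborhood of $\partial A$. Here I must be careful: $S$ continues past $\partial A$, so $D\cap S$ is $A$ plus small bands leaving through $\partial D$. Instead, I would use the isotopy $\psi$ to push $\textrm{int}(A)$ into the level $S^3\times\{1/2+\epsilon\}$. Then $\psi(S')$ meets $S^3\times[1/2+\epsilon/2, 1]$ only in a pushed-in copy of $A$, attached along $\partial A \times [1/2, 1/2+\epsilon]$ to the rest of $S'$.

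Next I would identify the local model. Let $B = D \times [1/2+\epsilon/2, 1/2+2\epsilon]$, a 4-ball after smoothing corners. Its boundary meets $\psi(S')$ exactly in the Hopf link $\partial A$, sitting in the boundary sphere. Also $\psi(S') \cap B$ is the Hopf band with interior pushed into $B$. This is diffeomorphic, as a pair, to $(B^4, A')$ of Proposition \ref{HiroseHopfBand}. That proposition gives $T_0 \in \mathrm{Diff}^+_\partial(B)$ with $T_0(\psi(S')\cap B) = \psi(S')\cap B$ and $T_0|=T_a$. Extending $T_0$ by the identity outside $B$ gives a diffeomorphism $T'$ of $B^4$. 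It preserves $\psi(S')$ and restricts to the Dehn twist about the core of $\psi(A)$, because it is the identity on $\psi(S')\setminus B$ and the twist is supported in the annulus.

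Then I would conjugate back. Set $T = \psi\circ T'\circ\psi^{-1}$, or equivalently phrase things so that $\psi^{-1}\circ T\circ\psi = T'$, matching the statement's convention. Since $\psi$ is supported away from $S^3\times\{0\}$, $T$ is the identity there. Also $T(S') = S'$, and $T|_{S'}$ is $\psi^{-1}$-conjugate to the twist about the core of $\psi(A)$, namely the twist $T_\gamma$ about the core of $A$. Hence $T_\gamma\in\mathcal{E}_\partial(B^4,S')$.

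The main obstacle is the second step: checking that $(B,\psi(S')\cap B)$ really is the standard pair $(B^4,A')$ with $\partial A$ unknotted in the correct Hopf configuration in $\partial B$. This requires choosing $D$ so that the bands of $S$ leaving $A$ stay below the level $1/2+\epsilon/2$, which is exactly why the push by $\psi$ is needed. I would verify this by arranging $\psi$ to move the interior of $A$ monotonically in the radial coordinate, keeping everything else fixed.
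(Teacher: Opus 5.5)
Your proposal is correct and is essentially the paper's argument: the paper simply applies Proposition~\ref{HiroseHopfBand} to the Hopf band after pushing it deeper, then transports back by the push $\psi$. Your version fills in the localization (a 4-ball $B=D\times[1/2+\epsilon/2,\,1/2+2\epsilon]$, extension by the identity) that the paper leaves implicit. Two small corrections. Since $\psi(S')$ is the pushed surface and $T'$ preserves it, the diffeomorphism preserving $S'$ is $\psi^{-1}\circ T'\circ\psi$, not $\psi\circ T'\circ\psi^{-1}$ as you wrote. Also, any 3-ball $D\supset A$ works, because everything in $\psi(S')$ other than the pushed band lies at height $\le 1/2$, so you never need $D\cap S$ to be small.
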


A straightforward generalization of these methods shows that the monodromy of any \textit{fibered subsurface} can be realized as an extendable mapping class.  \newline

\textbf{Twists along on separating unlinks.} In the sequel we make use of an argument in \cite[Lemma 2.2]{HiroseS^4}. For the benefit of the reader we include a reformulation (see also \cite{LawandeSaha} formulating this in the more general setting of arbitrary open book decompositions). 

\begin{figure}
  \centering
  \def\svgwidth{0.5\columnwidth}
  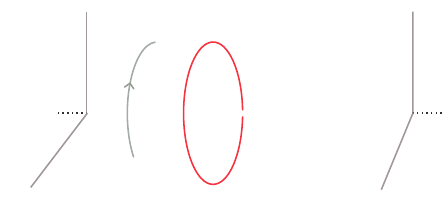
  \caption{\textnormal{Depicted is an axis $\alpha \subset S^3$ for a 2-component unlink $\gamma_1\cup \gamma_2$. The unlink is transverse to the pages of the open book decomposition $\pi : S^3\backslash \alpha \to S^1$. } }
  \label{shiftingfibers}
\end{figure}

\begin{proposition}[cf. Lemma 2.2 in \cite{HiroseS^4}] \label{separatingunlink}
If a surface $\Sigma \subset M^4$ intersects an $S^3 \times [0,1] \subset \mathrm{int}(M^4)$ in the cylinder on an $n$ component unlink $\gamma_1,\ldots,\gamma_n \subset S^3$, the multitwist $T_{\gamma_1}\dots T_{\gamma_n} \in \mathcal{E}_{\partial}(M,\Sigma)$.
\end{proposition}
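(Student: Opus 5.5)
Since the unlink $\gamma_1\cup\dots\cup\gamma_n$ is trivial, fix an unknotted circle $\alpha\subset S^3$ disjoint from it, as in Figure \ref{shiftingfibers}, so that each $\gamma_i$ is a meridian of $\alpha$. Concretely, choose $n$ disjoint small meridian circles of $\alpha$, each isotopic in $S^3\backslash\alpha$ to a round circle linking $\alpha$ once. Isotope the unlink onto them; this is possible because any unlink is isotopic to any other. Then every $\gamma_i$ is transverse to the pages of the open book $\pi:S^3\backslash\alpha\to S^1$, whose pages are disks bounded by $\alpha$. Conjugating by the isotopy extension of this move, we may assume $\Sigma\cap (S^3\times[0,1]) = (\gamma_1\cup\dots\cup\gamma_n)\times[0,1]$ with the $\gamma_i$ in this position.

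Next, consider the rotation flow $\rho_t$, $t\in[0,2\pi]$, of $S^3$ about the great circle $\alpha$. It is a circle action fixing $\alpha$ pointwise and rotating the pages, so it preserves each meridian circle $\gamma_i$ setwise and rotates it once. We may arrange that $\rho_t$ rotates each $\gamma_i$ rigidly by angle $t$. Choose a bump function $\lambda:[0,1]\to[0,2\pi]$ with $\lambda\equiv 0$ near $0$ and $\lambda\equiv 2\pi$ near $1$. Define
\[\Phi:S^3\times[0,1]\to S^3\times[0,1],\qquad \Phi(x,s)=(\rho_{\lambda(s)}(x),s).\]
Since $\rho_0=\rho_{2\pi}=\mathrm{id}$, $\Phi$ is the identity near both boundary components, so it extends by the identity to a diffeomorphism of $M$ that is the identity on $\partial M$. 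Because each $\rho_t$ preserves $\bigcup\gamma_i$, the map $\Phi$ preserves the cylinder and hence $\Sigma$. On the annulus $\gamma_i\times[0,1]$ it acts by $(\theta,s)\mapsto(\theta+\lambda(s),s)$, which is a Dehn twist along the core. Hence $\Phi\rvert_\Sigma=T_{\gamma_1}^{\pm1}\cdots T_{\gamma_n}^{\pm1}$, and the signs are all equal since the rotation has the same sense on every meridian with the chosen orientations. Replacing $\lambda$ by $2\pi-\lambda$ fixes the overall sign, and since $\mathcal{E}_\partial$ is a group this gives $T_{\gamma_1}\cdots T_{\gamma_n}\in\mathcal{E}_\partial(M,\Sigma)$.

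The main obstacle is the orientation bookkeeping. The $\gamma_i$ carry orientations induced from $\Sigma$, and some may be oriented opposite to the circle action. A Dehn twist, however, does not depend on the orientation of its curve: on the annulus $\gamma_i\times[0,1]$, reversing the direction of $\gamma_i$ while keeping the surface orientation also swaps the roles of the two ends. One checks in local coordinates that the map $(\theta,s)\mapsto(\theta+\lambda(s),s)$ is a twist of the same handedness regardless of how the core is oriented, since handedness is determined by the orientation of the annulus. The remaining risk is whether the surface orientations of the different cylinder components agree with the orientation of $S^3\times[0,1]$ in the same way; they do, because each component is a product $\gamma_i\times[0,1]$ oriented compatibly. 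So all twists have the same handedness, and we obtain the multitwist.
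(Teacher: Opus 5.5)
Your construction is the paper's own argument. You put the unlink in meridian position around an axis $\alpha$, rotate the pages of the open book $S^3\setminus\alpha\to S^1$, and let the rotation angle run from $0$ to $2\pi$ along the $[0,1]$ factor. The resulting map is the identity near $S^3\times\{0,1\}$, preserves the cylinder, and twists each annulus once. Your conjugation step and your bump-function definition of $\Phi$ are fine.

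The step that fails as written is the last sentence of your orientation paragraph. Write the $\Sigma$-orientation of the $i$-th annulus as $o_i\times\partial_s$ for an orientation $o_i$ of $\gamma_i$. You are right that handedness is determined by the annulus orientation. Concretely, $(\theta,s)\mapsto(\theta+\lambda(s),s)$ is a twist of one handedness when $o_i$ agrees with the direction in which $\rho_t$ moves $\gamma_i$, and of the other handedness when it disagrees. Since every flow-oriented orbit links $\alpha$ with the same sign, the handedness on the $i$-th annulus is governed by the sign of $\mathrm{lk}(\gamma_i,\alpha)$, with $\gamma_i$ oriented by $o_i$. Being a product $\gamma_i\times[0,1]$ imposes no relation among these signs. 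So an arbitrary isotopy onto meridians can produce mixed exponents, for example $T_{\gamma_1}T_{\gamma_2}^{-1}$.

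The repair uses the freedom you already have in the first step. An unlink is isotopic to itself with any one component reversed: rotate that component's spanning disk inside a ball missing the others. So choose the isotopy as an isotopy of oriented links carrying each $(\gamma_i,o_i)$ to a meridian with $\mathrm{lk}(\gamma_i,\alpha)=+1$. Then all twists have the same handedness, and $\Phi$ or $\Phi^{-1}$ restricts to $T_{\gamma_1}\cdots T_{\gamma_n}$. The paper's proof leaves this point implicit; once the choice is made explicit, your argument is complete.
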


\begin{proof}
Let $\alpha \subset S^3$ be an axis for the unlink $\gamma_1\cup \cdots \cup \gamma_n$ (see Figure \ref{shiftingfibers}). Consider the standard open book decomposition $\pi :S^3 \backslash \alpha \cong \textrm{int}(D^2)\times S^1 \to S^1$. Pick $n$ distinct points $\{x_1,\ldots,x_n\} \subset \textrm{int}(D^2)$. We may assume the unlink $\gamma_1\cup \cdots \cup \gamma_n$ is of the form $\{x_1,\ldots,x_n\} \times S^1$. By isotopy extension, the monodromy of $\pi$ yields an isotopy \[\varphi : S^3 \times [0,1]\to S^3\] with $\varphi\rvert_{S^3\times \{0\}}= \varphi\rvert_{S^3\times \{1\}}= \textrm{Id}_{S^3}$. The isotopy $\varphi$ spins the points $\{x_1,\ldots,x_n\}$ around the axis $\alpha$ whilst fixing the unlink $\gamma_1 \cup \cdots \cup \gamma_n$ setwise. Define $\Phi: S^3 \times [0,1] \to S^3 \times [0,1]$ by $\Phi(x,t) = (\varphi(x,t),t)$. The map $\Phi$ induces a Dehn twist on each of the annuli $\gamma_1 \times [0,1] \cup \cdots \cup \gamma_n \times [0,1]$. Extending $\Phi$ by the identity over $M$ shows $T_{\gamma_1}\cdots T_{\gamma_n} \in \mathcal{E}_{\partial}(M,\Sigma)$.
\end{proof}

When applying Proposition \ref{separatingunlink}, we suppress conjugation by an isotopy arranging the candidate surface so that it intersects an $S^3\times [0,1]$ in the cylinder on an unlink. \newline

Hirose \cite{HiroseS^4} proved that unknotted surfaces in $S^4$ are spin flexible (the genus one case is due to Montesinos \cite{Montesinos}). In more detail, Hirose constructed generating sets for the stabilizer of Rochlin's form (an \textit{even} quadratic form), using the methods above to exhibit each generator as an extendable mapping class. 

We conclude this section by observing that spin-flexibility for unknots extends in a straightforward way to the \textit{rel boundary} setting. This result is used to lift mapping classes along an unknotted subsurface in Lemma \ref{twistson1curves}. 

\begin{proposition} \label{relboundaryhirose}
For $g \geq 1$, let $\Sigma_g^1 \subset B^4$ denote an unknotted genus $g$ Seifert surface for an unknot in $S^3 = \partial B^4$ with its interior pushed into the interior of $B^4$. Then $\mathcal{E}_{\partial}(B^4,\Sigma_g^1) = \Mod(\Sigma_g^1)[q_{\Sigma_g^1}]$.  \end{proposition}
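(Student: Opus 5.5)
The inclusion $\mathcal{E}_{\partial}(B^4,\Sigma_g^1) \leq \Mod(\Sigma_g^1)[q_{\Sigma_g^1}]$ is immediate. Proposition \ref{pullbackRochlin} shows that $\Sigma_g^1$ is characteristic in $B^4$ and that $q_{\Sigma_g^1}$ is the pullback of the self-linking form $\theta$ of the unknotted Seifert surface. Proposition \ref{RochlinsForm}, applied in the rel boundary setting with $H_1(B^4;\mathbb{Z})=0$, then gives that every extendable class preserves $q_{\Sigma_g^1}$. So the work is the reverse inclusion. I would follow Hirose's strategy from \cite{HiroseS^4}, adapted to the bordered surface: write down an explicit generating set for $\Mod(\Sigma_g^1)[q]$, where $q = q_{\Sigma_g^1}$ is an even form, and realize each generator as a rel-boundary isotopy in $B^4$.

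\textbf{Step 1: a model for the surface.} Take $\Sigma_g^1 \subset S^3$ to be the standard boundary connected sum of $g$ unknotted, untwisted plumbed pairs. These give the usual chain of curves $a_1,b_1,\ldots,a_g,b_g$, and to these I would add the curves $c_i$ that separate off the first $i$ handles, together with the standard Humphries-type curves. For the standard unknotted surface each $a_i,b_i$ is an unknot with $\theta=0$. Each $a_i\cup b_i$ band sum gives a curve with $\theta=1$ that bounds a Hopf band inside $\Sigma_g^1$.

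\textbf{Step 2: generators of the stabilizer.} For the bordered surface I would use the known generating set (Hirose; compare Calderon--Salter and Randal-Williams) for the spin mapping class group of an even form. It consists of Dehn twists $T_\gamma$ on curves with $q(\gamma)=1$, squares $T_\delta^2$ of twists on curves with $q(\delta)=0$, and certain ``handle-swapping'' elements or products $T_{\gamma_1}T_{\gamma_2}$ along pairs of curves cobounding a subsurface. In the bordered case one also needs the boundary-parallel twist $T_{\partial}$, which is itself a product of such generators via the chain relation. The generating set can be taken to consist of finitely many explicit curves in the model.

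\textbf{Step 3: realizing the generators.} Each type is realized by one of the earlier methods.
\begin{itemize}
\item Twists on $q=1$ curves in the generating set are chosen to be Hopf band curves. They are extendable by the lemma following Proposition \ref{HiroseHopfBand}.
\item Squares $T_\delta^2$ with $\delta$ an unknotted $\theta=0$ curve with untwisted annular neighborhood are extendable by the tube isotopy of Figure \ref{squaretwist}, after pushing the annulus into a collar $S^3\times[0,1]$.
\item Multitwists along separating configurations that form an unlink in a level $S^3$ are handled by Proposition \ref{separatingunlink}.
\item Handle-swap elements are realized by explicit isotopies of the standard surface in $S^3$ that rotate one handle past another, then pushed into $B^4$.
\end{itemize}
All of these isotopies are supported away from a collar of $\partial\Sigma_g^1$, so they are rel boundary.

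\textbf{Main obstacle.} I expect the hardest step to be ensuring that the generating set from the closed case of \cite{HiroseS^4} transfers to the bordered surface. Two things must be checked: that the stabilizer of $q$ in $\Mod(\Sigma_g^1)$ surjects onto the closed stabilizer after capping, and that the kernel of capping is accounted for. I would use the capping exact sequence
\[1\to \pi_1(UT\Sigma_g)\to \Mod(\Sigma_g^1)\to\Mod(\Sigma_g)\to 1.\]
The kernel consists of point-push maps and $T_\partial$. Point-pushes along curves are products $T_{\gamma}T_{\gamma'}^{-1}$ of twists on the two boundary curves of an annulus neighborhood. Since $\gamma$ and $\gamma'$ have the same $q$-value, I would handle each such product in one of two ways:
\begin{itemize}
\item if both curves have $q=1$, as a product of Hopf band twists;
\item if both have $q=0$, as a separating unlink multitwist combined with tube isotopies, using Proposition \ref{separatingunlink}.
\end{itemize}
Every element of the kernel preserves $q$, and the lift of each closed generator can be chosen among realized elements, so the result follows.
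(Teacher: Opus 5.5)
Your ``main obstacle'' paragraph has the same architecture as the paper's proof. You restrict the capping sequence to $q$-stabilizers, which is legitimate because $H_1(\Sigma_g^1;\mathbb{Z}/2\mathbb{Z})\cong H_1(\Sigma_g;\mathbb{Z}/2\mathbb{Z})$ equivariantly. You lift Hirose's closed generators as multitwists supported in $\Sigma_g^1$. You then show the kernel is extendable using point-pushes $T_{\gamma_L}T_{\gamma_R}^{-1}$, Proposition~\ref{separatingunlink} on unlinks in the collar, and tube isotopies; this makes Steps 1--3 redundant. However, one step fails as written: your treatment of $T_\partial$. It lies in the capping kernel, and your only justification is ``the chain relation.'' The relation $T_\partial=(T_{c_1}\cdots T_{c_{2g}})^{4g+2}$ helps only if every $T_{c_i}$ is extendable. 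That forces $q(c_i)=1$ for all $i$, since a twist on a nonseparating curve with $q(c)=0$ does not even preserve $q$: it sends $q(x)$ to $q(x)+\langle x,c\rangle(q(c)+1)$. Now suppose all $q(c_i)=1$ and take the symplectic basis $x_i=c_1+c_3+\cdots+c_{2i-1}$, $y_i=c_{2i}$. Then $q(x_i)\equiv i$, so $\mathrm{Arf}(q)\equiv\sum_{i=1}^g q(x_i)q(y_i)\equiv\lceil g/2\rceil \pmod 2$. This equals $1$ for $g\equiv 1,2\pmod 4$, contradicting that $q$ is even. So for those genera no such chain exists, and the argument gives nothing.

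The repair is what the paper does. The collar $\partial\Sigma_g^1\times[0,1/2]$ is the cylinder on a one-component unlink, which trivially separates the surface. Proposition~\ref{separatingunlink} therefore gives $T_\partial\in\mathcal{E}_\partial(B^4,\Sigma_g^1)$ directly. You also need this if you use the three-component unlink $\gamma_L\cup\gamma_R\cup\Delta$, since that produces $T_{\gamma_L}T_{\gamma_R}T_\Delta$.

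Two smaller points. Your $q=1$ branch for point-pushes requires both push-offs $\gamma_L,\gamma_R$ to be Hopf band curves in $S^3$, which is not automatic. It is also unnecessary: the standard $a_k,b_k$, all with $q=0$, generate $\pi_1(\Sigma_g)$, and the paper uses only those. Finally, for $g=1$ the capping kernel is $\langle T_\partial\rangle$, not $\pi_1(UT\Sigma_1)$, because the Birman sequence fails for the torus. Hirose's closed generating set is stated for $g\ge 2$, so the paper handles $g=1$ separately by citation.
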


\begin{proof}
Cap off the boundary component $\Delta$ of $\Sigma_g^1$ with a closed embedded disk in $S^3$. Let $\Sigma_g$ denote the resulting surface. Rochlin's form $q := q_{\Sigma_g^1}$ extends over $\Sigma_g$ (inclusion induces an isomorphism $H_1(\Sigma_g^1;\mathbb{Z}/2\mathbb{Z}) \cong H_1(\Sigma_g;\mathbb{Z}/2\mathbb{Z})$). Therefore, by (the restriction of) \textit{disk-pushing} \cite{Primer}, there is a short exact sequence 
$$ 1 \to \pi_1(UT\Sigma_g) \to \Mod(\Sigma_g^1)[q] \to \Mod(\Sigma_g)[q] \to 1,$$
where $UT\Sigma_g \to \Sigma_g$ denotes the unit tangent bundle of $\Sigma_g$. Hirose \cite{HiroseS^4} constructs, for $g\geq 2$, a generating set for $\Mod(\Sigma_g)[q]$ consisting of multitwists supported in $\Sigma_g^1$. The methods in \cite{HiroseS^4} show each element has a lift to $\mathcal{E}_{\partial}(B^4,\Sigma_g^1)$. The same is true for $g = 1$ (see \cite{HiroseSurvey}).

It now suffices to prove that \[\pi_1(UT\Sigma_g) \leq \mathcal{E}_{\partial}(B^4,\Sigma_g^1).\]   If  $\widetilde{\gamma} \in \pi_1(UT\Sigma_g)$ has image $\gamma \in \pi_1(\Sigma_g)$ (based at point in $\Delta$) under projection $UT\Sigma_g\to \Sigma_g$, the image of $\widetilde{\gamma}$ in $\Mod(\Sigma_g^1)$ is the multitwist $T_{\gamma_L}T_{\gamma_R}^{-1}T_{\Delta}^k$ for some $k \in \mathbb{Z}$, where $\gamma_L$ (resp. $\gamma_R$) is obtained by pushing $\gamma$ to the left (resp. right) over the disk $\Delta$. The curves $\gamma_L \cup \gamma_R \cup \Delta$ bound a pair of pants on $\Sigma_g^1$.

Let $\{a_k,b_k\}_{k=1}^{g}$ denote the standard generating set for $\pi_1(\Sigma_g,*)$, $* \in \partial \Sigma_g^1$. In Figure \ref{pantsrelbdry}, $a_{k,L} \cup a_{k,R} \cup \Delta$ and $b_{k,L} \cup b_{k,R} \cup \Delta$ are 3-component separating unlinks in an $S^3$ slice of a collar about the boundary. By Proposition \ref{separatingunlink}, the multitwists along the boundary components of the pair of pants are in $\mathcal{E}_{\partial}(B^4,\Sigma_g^1)$.

By the tube isotopy (Figure \ref{squaretwist}), the square of each Dehn twist along $a_k,b_k$ is in $\mathcal{E}_{\partial}(B^4,\Sigma_g^1)$. By Proposition \ref{separatingunlink}, we have $T_\Delta \in \mathcal{E}_{\partial}(B^4,\Sigma_g^1)$. As disjoint twists commute, it follows that a generating set for $\pi_1(UT\Sigma_g)$ is contained in $\mathcal{E}_{\partial}(B^4,\Sigma_g^1)$. 
\end{proof}

\begin{figure}
  \centering
  \def\svgwidth{0.85\columnwidth}
  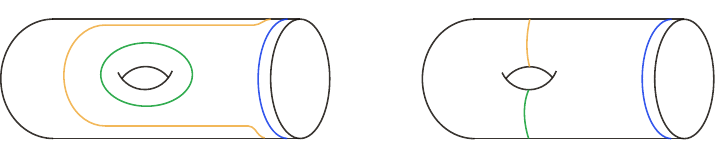
  \caption{\textnormal{The union $a_{k,L}\cup a_{k,R} \cup \Delta$ forms a 3-component unlink in $S^3$, similarly for $b_{k,L} \cup b_{k,R} \cup \Delta$}.}
  \label{pantsrelbdry}
\end{figure}

\section{Framed and Spin mapping class groups}

\subsection{Framings} Recall that a \textit{framing} of a manifold is a trivialization of its tangent bundle. Let $\Sigma = \Sigma_g^b$ be a compact oriented surface of genus $g$ with $b \geq 1$ boundary components. Fix a framing $\phi : T\Sigma \cong \Sigma \times \mathbb{R}^2$. Let $\mathcal{S}(\Sigma)$ denote the set of oriented isotopy classes of simple closed curves on $\Sigma$. The framing $\phi$ determines a \textit{winding number function} (again denoted) $\phi : \mathcal{S}(\Sigma) \to \mathbb{Z}$  by \[\phi(\gamma) = \deg(\phi \circ \overrightarrow{\gamma} : S^1 \to \mathbb{R}^2 -\{0\}) \in \mathbb{Z}.\]  Here $\overrightarrow{\gamma}$ is the forward pointing velocity vector field of $\gamma$. Winding number functions depend only on the isotopy class of framing. We abuse terminology, identifying winding number functions with their respective framings. As $\Sigma$ is oriented, we frequently specify an isotopy class of framing by means of a nowhere vanishing vector field. Winding number functions satisfy the following basic properties as established by Humphries-Johnson in \cite{HJ}.

\begin{proposition}
\label{twistlinear}
For a framed surface $(\Sigma,\phi)$ the function $\phi : \mathcal{S}(\Sigma) \to \mathbb{Z}$ satisifes  
\begin{enumerate}
    \item (Twist-linearity) For any $a,b \in \mathcal{S}(\Sigma)$, 
    $$ \phi(T_a(b)) = \phi(b) + \langle b,a\rangle \phi(a)$$
    where $\langle -,-\rangle : H_1(\Sigma;\mathbb{Z}) \times H_1(\Sigma;\mathbb{Z}) \to \mathbb{Z}$ denotes the oriented intersection pairing. 
    \item (Homological coherence) Let $S \subset \Sigma$ be a subsurface with boundary components $c_1,\ldots,c_k$ oriented with $S$ to their left. Then 
    $$\sum_{i=1}^k \phi(c_k) = \chi(S)$$
    where $\chi(S)$ denotes the Euler characteristic of $S$. 
\end{enumerate}

\end{proposition}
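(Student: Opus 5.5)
Twist-linearity and homological coherence are both local computations of winding numbers, so I will prove each by working in a neighborhood of the relevant curves and using only additivity of degree.

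\emph{Twist-linearity.} First I reduce to curves in minimal position. The winding number $\phi$ depends only on isotopy class, so I may assume $a$ and $b$ are transverse with $|a\cap b| = i(a,b)$. The twisted curve $T_a(b)$ is then obtained by surgery: at each intersection point $p \in a\cap b$, cut $b$ and splice in a parallel copy of $a$, traversed in the direction forced by the sign of the intersection. After smoothing, $T_a(b)$ consists of the arcs of $b$ together with one parallel copy of $a$ per intersection point. I compute $\phi(T_a(b))$ as the total turning of the framed tangent vector along this concatenated loop, which is a sum of contributions:
\begin{enumerate}
\item the arcs of $b$ contribute $\phi(b)$;
\item each inserted copy of $a$ contributes $\pm\phi(a)$, with the sign equal to the local intersection sign of $b$ with $a$ at that point;
\item the corners created by the smoothing contribute correction terms.
\end{enumerate}
Summing the signs in (b) gives $\langle b,a\rangle\phi(a)$. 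The step needing care is (c). At each intersection point, smoothing produces one left turn and one right turn of equal angle, so the correction terms cancel. This is easy to check locally, since in a chart around $p$ the framing is homotopic to the constant framing. Choosing the orientation convention for left-handed twists consistently with the paper's pairing $\langle -,-\rangle$ then gives the sign in the formula.

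\emph{Homological coherence.} I expect this to be the main point. Let $X$ be the tangent vector field along $\partial S$ given by the framing, and extend it to a nowhere-zero section over all of $S$ by taking the constant vector field $\phi^{-1}(e_1)$. I will use the Poincaré–Hopf theorem in the following relative form: for a vector field $V$ on $S$ with isolated zeros, the index sum equals $\chi(S)$ minus the sum, over boundary components, of the winding of $V$ relative to the velocity vector field of the boundary.

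I apply this with $V = \phi^{-1}(e_1)$, which has no zeros. Its winding relative to $\overrightarrow{c_i}$ is $-\phi(c_i)$, since $\phi(c_i)$ measures $\overrightarrow{c_i}$ relative to the framing. The orientation convention (each $c_i$ has $S$ to its left) fixes the signs. The resulting identity is
$$\sum_{i=1}^k \phi(c_i) = \chi(S).$$

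As a sanity check, take $S$ to be a disk. The boundary circle traversed counterclockwise has winding number $1 = \chi(D^2)$ relative to a constant field, so the formula holds.

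The remaining care is in the sign conventions and in making sure the relative Poincaré–Hopf statement is quoted correctly. To settle both, I would verify the disk case and the annulus case directly. For the annulus, the two boundary curves are oriented oppositely, their windings cancel, and $\chi = 0$, as the formula requires.
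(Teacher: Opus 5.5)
The paper does not prove this proposition; it quotes it from Humphries--Johnson \cite{HJ}. Your direct argument (a local rotation count for twist-linearity, relative Poincar\'e--Hopf for homological coherence) is the standard self-contained proof and is sound in outline. As written, however, the homological coherence step has a sign error that your last line silently overrides.

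The relative Poincar\'e--Hopf formula you quote (``$\chi(S)$ minus the winding of $V$ relative to $\overrightarrow{c_i}$'') is false. Take the constant field on the disk: the index sum is $0$, while $\chi = 1$ and the winding of $V$ relative to the counterclockwise velocity is $-1$, so your formula gives $2$. Accordingly, inserting your (correct) value $-\phi(c_i)$ into it yields $\sum_i\phi(c_i) = -\chi(S)$. The correct statement is
\[
\sum_{x\in V^{-1}(0)}\mathrm{ind}_x(V) = \chi(S) - \sum_{i=1}^k w_V(c_i),
\]
with $w_V(c_i)$ the winding of $\overrightarrow{c_i}$ relative to $V$. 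It checks out on the disk: the constant field gives $0 = 1-1$ and the radial field gives $1 = 1-0$. For $V = \phi^{-1}(e_1)$ there are no zeros and $w_V(c_i) = \phi(c_i)$ by definition, so $\sum_i \phi(c_i) = \chi(S)$ is immediate. Your annulus check cannot see this sign, since both sides vanish. The disk test has to be applied to the Poincar\'e--Hopf statement itself, not only to the target identity.

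For twist-linearity, the sign is asserted rather than derived, but it takes one line. Suppose $T_a$ is the twist under which $b$ turns left on meeting $a$, and $\langle b,a\rangle$ counts $+1$ where $(\overrightarrow{b},\overrightarrow{a})$ is a positive basis. Then at such a point $\overrightarrow{a}$ points to the left of $b$, so the inserted copy of $a$ is traversed along its orientation and contributes $+\phi(a)$. The other twist convention flips the correction term.

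The count is also cleanest arc by arc. The curve $b$ meets an annular neighbourhood $A$ of $a$ in arcs $\beta_1,\dots,\beta_n$, and $T_a(b)=b$ off $A$. The rotation of $T_a(\beta_j)$ minus that of $\beta_j$ (same endpoints and end tangents) is computed inside $A$ and equals $\pm\phi(a)$. This makes your corner bookkeeping automatic. It also avoids having to justify that splicing a full parallel copy of $a$ at every intersection point computes the winding number of the embedded curve $T_a(b)$; for $i(a,b)\ge 2$ that spliced curve crosses the other arcs of $b$.
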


A framing of a surface may be extended across a stabilization of its boundary. Precisely, we have 

\begin{lemma}[Lemma 3.20 in \cite{IshanNick}]
Let $(\Sigma,\phi)$ be a framed surface. Let $\Sigma^+$ be a surface obtained from $\Sigma$ by attaching a 1-handle along $\partial \Sigma$. Fix $w \in \mathbb{Z}$. Let $c \subset \Sigma^+$ be a curve for which $c \cap \Sigma$ is a single arc. Then there is a unique isotopy class of framing $\phi^+$ of $\Sigma^+$, extending $\phi$, for which $\phi^+(c) = w \in \mathbb{Z}$.
\label{extendframing}
\end{lemma}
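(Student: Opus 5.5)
We reduce the lemma to an obstruction-theoretic statement about framings of the 1-handle. Write $\Sigma^+ = \Sigma \cup h$, where $h \cong [0,1]\times[0,1]$ is attached along $\{0,1\}\times[0,1] \subset \partial\Sigma$. The plan has two parts: first show that framings of $\Sigma^+$ extending $\phi$ form a torsor over $\mathbb{Z}$, then show that the winding number of $c$ detects this $\mathbb{Z}$ exactly.

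\textbf{Framings extending $\phi$.} Since $\Sigma^+$ is oriented, a framing is the same as a nowhere vanishing vector field, up to homotopy through nonvanishing fields. Hence a framing of $\Sigma^+$ extending $\phi$ amounts to extending a nonvanishing field from $\Sigma$ across $h$.

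\begin{itemize}
\item Existence: $h$ is a disk and $TS^+|_h$ is trivial. We trivialize it so that the given field on the attaching region $\{0,1\}\times[0,1]$ is constant on each of the two attaching arcs separately. The extension problem then reduces to extending a map $\{0,1\}\times[0,1] \to \mathbb{R}^2 - \{0\}$ over $[0,1]\times[0,1]$. This is always possible, because the attaching region is two contractible pieces and $\mathbb{R}^2-\{0\}$ is path connected.
\item Classification: relative to the fixed values on the attaching arcs, the homotopy classes of such extensions are classified by $\pi_1(\mathbb{R}^2-\{0\}) \cong \mathbb{Z}$. Concretely, any two extensions differ on $h$ by a map $([0,1],\{0,1\}) \to (S^1,*)$ along the core arc, up to homotopy rel endpoints.
\end{itemize}

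So extensions form a $\mathbb{Z}$-torsor. Acting by $n \in \mathbb{Z}$ means composing the field on $h$ with $n$ extra full rotations along the core direction. We also check that isotopy classes of framings of $\Sigma^+$ restricting to $\phi$ correspond to these homotopy classes rel $\Sigma$: a homotopy of framings on $\Sigma^+$ whose restriction to $\Sigma$ is a homotopy of $\phi$ can be corrected by the homotopy extension property, since $(\Sigma^+,\Sigma)$ is a CW pair. The hypothesis that the extension is "unique up to isotopy class" is read in this sense.

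\textbf{Effect on $\phi^+(c)$.} The curve $c$ meets $\Sigma$ in a single arc, so $c \cap h$ is a single arc running once across $h$ from one attaching arc to the other. Isotope $c$ so that $c \cap h$ is the core $[0,1]\times\{1/2\}$. Modifying the extension by $n \in \mathbb{Z}$ adds $n$ full turns to the field along $c\cap h$ and leaves it unchanged along $c \cap \Sigma$. Therefore
\[ \phi^+_n(c) = \phi^+_0(c) + n \]
(the sign $\pm n$ depends on orientation conventions and is irrelevant). Thus $n \mapsto \phi^+_n(c)$ is a bijection $\mathbb{Z} \to \mathbb{Z}$, so exactly one extension satisfies $\phi^+(c) = w$. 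This gives existence and uniqueness.

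\textbf{Main subtlety.} The delicate step is verifying that the winding number of $c$ is well defined on isotopy classes of extensions rel $\Sigma$, and that it changes by exactly $\pm 1$ per generator. This requires that $c$ crosses $h$ exactly once, which is guaranteed by the hypothesis that $c\cap\Sigma$ is a single arc. If $c$ crossed $h$ several times, the change would be a signed count of crossings, which could vanish.
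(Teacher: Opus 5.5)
The paper gives no proof of this lemma. It is quoted from Calderon--Salter and used as a black box: to extend $\phi_\beta$ over $P_{\beta,g}$ along $\mu,c_1,\dots,c_{2g-1}$, and over $P_{\beta_d,1}$ along $\delta_1,\delta_2$. So there is no internal argument to compare against, and your proof has to stand on its own. It does. Extensions of $\phi$ over the handle, up to homotopy rel $\Sigma$, form a torsor for $\pi_1(\mathbb{R}^2-\{0\})\cong\mathbb{Z}$. In cohomological terms this $\mathbb{Z}$ is the image of $H^1(\Sigma^+,\Sigma;\mathbb{Z})$ in $H^1(\Sigma^+;\mathbb{Z})$. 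The winding number of a curve crossing the handle once detects it with coefficient $\pm 1$. This is the usual mechanism (framings up to isotopy form an $H^1$-torsor detected by winding numbers), made explicit at the level of vector fields.

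There are two points of precision.

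First, ``$c\cap\Sigma$ is a single arc'' does not by itself force $c\cap h$ to run from one attaching arc to the other. The complementary arc may have both endpoints on the same attaching arc. It is then boundary-parallel in $h$, so $c$ is isotopic into $\Sigma$ and $\phi^+(c)$ is already determined by $\phi$, and the statement fails. The hypothesis has to be read as ``$c$ meets the cocore of the handle exactly once.'' That is what you actually use, so your claim that it is ``guaranteed'' by the single-arc condition should be replaced by this reading.

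Second, the homotopy extension property proves only one half of your claimed correspondence between isotopy classes and classes rel $\Sigma$. It does show that every framing of $\Sigma^+$ whose restriction is isotopic to $\phi$ is isotopic to some $\phi^+_n$: extend the isotopy on $\Sigma$ over $h$. It does not by itself show $\phi^+_n\not\simeq\phi^+_m$ for $n\neq m$. An isotopy between them may restrict to a non-contractible loop of framings of $\Sigma$ at $\phi$, for instance a global rotation, and HEP cannot straighten such a loop rel $\Sigma$. It can be repaired by noting that such loops extend over $\Sigma^+$. You do not need this half, though. Winding numbers are isotopy invariants and $\phi^+_n(c)=\phi^+_0(c)\pm n$. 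Hence any two framings extending $\phi$ with $\phi^+(c)=w$ are isotopic to the same $\phi^+_n$, which is exactly the uniqueness claimed. Existence comes from choosing $n = \pm(w-\phi^+_0(c))$, with the sign fixed by your orientation convention.
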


\textbf{Framed mapping class groups.} Let $\phi : \mathcal{S}(\Sigma) \to \mathbb{Z}$ be a framing of $\Sigma$. Let $\Mod(\Sigma)$ act on the set of (isotopy classes of) framings by pullback on the determined winding number functions. Precisely, we set $(f \cdot \phi)(x) = \phi(f^{-1}(x))$ for $x \in \mathcal{S}(\Sigma)$. 

\begin{defn} \label{framedmcg}
Let $(\Sigma,\phi)$ be a framed surface. The stabilizer of $\phi$ is denoted 
\[\Mod(\Sigma)[\phi] = \{f \in \Mod(\Sigma) \,|\, (f\cdot \phi) = \phi\}.\] We call $\Mod(\Sigma)[\phi]$ a \textit{framed mapping class group}.
\end{defn}

Following Calderon-Salter \cite{CalderonSalter}, we list a variety of foundational results regarding the structural properties of framed mapping class groups to be used in the sequel.  

\begin{defn}
Let $(\Sigma,\phi)$ be a framed surface. A nonseparating simple closed curve $\gamma \subset \Sigma$ is called \textit{admissible} if $\phi(\gamma) = 0$. 
\end{defn}

 By twist linearity, if $\gamma \subset \Sigma$ is admissible,  $T_{\gamma} \in \Mod(\Sigma)[\phi]$. The  \textit{admissible subgroup} is defined as \[\mathcal{T}_{\phi} := \langle T_{\gamma} \;|\; \gamma  \; \mathrm{ admissible}\rangle \leq \Mod(\Sigma)[\phi].\] A foundational result is

\begin{theorem}[Proposition 5.11 in \cite{Strata3}] \label{AdmissibleTwists}
For a framed surface $(\Sigma_g^b,\phi)$, where $g \geq 5$ and $b\geq 1$, we have  
\[\Mod(\Sigma_g^b)[\phi] = \mathcal{T}_{\phi}.\]
\end{theorem}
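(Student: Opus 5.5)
The plan is to run a Putman-style ``stabilizer plus transitivity'' argument on a suitable complex of admissible curves. The inclusion $\mathcal{T}_\phi \leq \Mod(\Sigma_g^b)[\phi]$ is immediate from twist-linearity (Proposition \ref{twistlinear}(a)), so only the reverse inclusion needs proof.

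First, I would recast $\Mod(\Sigma)[\phi]$ algebraically. Isotopy classes of framings form a torsor over $H^1(\Sigma;\mathbb{Z})$. So the action of $\Mod(\Sigma)$ on framings gives a crossed homomorphism $\Theta_\phi:\Mod(\Sigma)\to H^1(\Sigma;\mathbb{Z})$, $f\mapsto f\cdot\phi-\phi$, and $\Mod(\Sigma)[\phi]=\ker\Theta_\phi$. This makes it easy to check that particular mapping classes, such as separating twists, boundary twists and products like $T_aT_b^{-1}$ for curves with $\phi(a)=\phi(b)$, lie in $\Mod(\Sigma)[\phi]$. Second, I would prove a framed change-of-coordinates statement: $\mathcal{T}_\phi$ acts transitively on admissible curves, and more generally on ordered pairs of admissible curves meeting once, and on ``admissible chains''. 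The tool is twist-linearity together with Lemma \ref{extendframing}. Given two admissible curves $a,a'$, I would connect them by a path in the complex of admissible curves in which consecutive curves intersect once. For consecutive admissible curves $x,y$ with $i(x,y)=1$, the braid relation gives $T_xT_y(x)=y$, and this element lies in $\mathcal{T}_\phi$. Establishing connectivity of this complex (and of the complex of admissible pairs) is where the hypothesis $g\geq 5$ enters. I would try to deduce it from connectivity of the ordinary nonseparating curve complex by surgering each edge: whenever an intermediate curve is not admissible, replace it with a curve of winding number $0$ in a complementary subsurface of genus $\geq 2$, using homological coherence (Proposition \ref{twistlinear}(b)) to control the winding numbers.

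Third, given transitivity, Putman's trick reduces the problem to showing that $\mathrm{Stab}_{\Mod(\Sigma)[\phi]}(a)$ is generated by $\mathcal{T}_\phi\cap \mathrm{Stab}(a)$ together with elements of $\mathcal{T}_\phi$. Here $a$ is a fixed admissible curve, and the complex used is the one from the second step. The stabilizer is essentially the framed mapping class group of $\Sigma\setminus a$, of genus $g-1$ with $b+2$ boundary components, extended by $\langle T_a\rangle$. Naive induction would drop below the genus threshold, so I would instead fix a single explicit configuration of admissible curves: a long chain $c_1,\ldots,c_{2g}$ plus a few extra curves, arranged as in the Humphries/Wajnryb generating set with all winding numbers chosen to be $0$ via Lemma \ref{extendframing}. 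I would then show directly that twists in this configuration, together with a finite list of elements of the form $T_xT_y^{-1}$ and separating twists, generate $\Mod(\Sigma)[\phi]$. To get that list, take the Humphries generators of $\Mod(\Sigma)$, express the restriction of $\Theta_\phi$ to them, and write down a Schreier-type generating set for its kernel. Each resulting generator is then exhibited inside $\mathcal{T}_\phi$: separating twists and the relevant products come from lantern and chain relations among admissible curves, which have enough room once $g\geq 5$.

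The main obstacle is the second step, together with the lantern and chain bookkeeping at the end of the third. One must show that every nonadmissible nonseparating curve can be routed around using only admissible curves. One must also show that each lantern needed to produce a separating twist, or a twist about a curve of nonzero winding number, can be realized with all seven curves admissible. The winding numbers in a lantern are constrained by homological coherence. Finding configurations satisfying those constraints that fit inside a genus-$g$ surface with $g\geq 5$ is, I expect, exactly what forces the genus bound. If direct connectivity proves too delicate, I would fall back on showing that $\mathcal{T}_\phi$ contains the Torelli group and computing images in $\mathrm{Sp}$, i.e.\ comparing $\mathcal{T}_\phi$ and $\Mod(\Sigma)[\phi]$ via their images under the symplectic representation and the Chillingworth-type crossed homomorphism.
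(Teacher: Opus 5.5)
The paper gives no proof of this statement; it is quoted from Calderon--Salter \cite{Strata3}, whose argument builds on their closed-surface $r$-spin theorem (Theorem \ref{admissiblespin} here). Judged on its own, your proposal has a genuine gap, and it sits exactly where the content of the theorem lies.

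Your second step, connectivity of the admissible curve graph and hence transitivity of $\mathcal{T}_\phi$, is plausible though only sketched. Granting it, Putman's trick correctly reduces everything to showing $\mathrm{Stab}_{\Mod(\Sigma)[\phi]}(a)\leq\mathcal{T}_\phi$. You rightly note that this stabilizer is a framed mapping class group of genus $g-1$, so induction on genus has no base case.

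The replacement you propose, a Schreier computation for $\ker\Theta_\phi$ relative to Humphries generators, cannot work as described. $\Mod(\Sigma)[\phi]$ has infinite index in $\Mod(\Sigma)$: if $c$ is nonseparating with $\phi(c)=w\neq 0$, then $T_c^k\cdot\phi$ differs from $\phi$ by $\pm kw\langle -,c\rangle$, so the orbit of $\phi$ in the $H^1(\Sigma;\mathbb{Z})$-torsor is infinite. Schreier's lemma therefore yields infinitely many generators $t\,s\,(\overline{ts})^{-1}$, indexed by an infinite transversal, not a finite list. Nothing forces these to be separating twists or products $T_xT_y^{-1}$, and showing they lie in $\mathcal{T}_\phi$ is the original problem. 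This step also discards the Putman reduction, since it attacks the whole group directly.

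Several concrete inputs you plan to use are also wrong.

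\emph{The claim about $T_aT_b^{-1}$.} For disjoint $a,b$, twist-linearity gives $\phi(T_aT_b^{-1}(c))=\phi(c)+\langle c,a\rangle\phi(a)-\langle c,b\rangle\phi(b)$. So $\phi(a)=\phi(b)$ does not put $T_aT_b^{-1}$ in $\Mod(\Sigma)[\phi]$.

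\emph{The fallback via Torelli.} Suppose $a,b$ cobound a genus-$h$ subsurface disjoint from $\partial\Sigma$ and are oriented homologously. Homological coherence then gives $\phi(a)-\phi(b)=\pm 2h$, so every such bounding pair map with $h\geq 1$ shifts winding numbers by $\pm 2h\langle c,a\rangle$. Hence $\mathcal{T}_\phi\leq\Mod(\Sigma)[\phi]$ can never contain the Torelli group $\mathcal{I}$. What one could aim for instead is that $\mathcal{T}_\phi$ contains all separating twists and that $\mathcal{T}_\phi\cap\mathcal{I}$ is the kernel of the Chillingworth homomorphism $\mathcal{I}\to H^1(\Sigma;\mathbb{Z})$, $f\mapsto f\cdot\phi-\phi$.

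\emph{The use of Lemma \ref{extendframing}.} That lemma constructs framings; it does not let you prescribe winding numbers of curves for a given $\phi$. On $\Sigma_g^1$, an admissible chain $c_1,\dots,c_{2g}$ fills the surface, so $q_\phi=\phi+1$ equals $1$ on a basis of $H_1(\Sigma_g^1;\mathbb{Z}/2\mathbb{Z})$. This determines $\mathrm{Arf}(q_\phi)$ as a function of $g$; for $g=5$ it is $1$, the Arf invariant of the $(2,11)$ torus knot. So a framing of $\Sigma_5^1$ with Arf invariant $0$ contains no such chain. Any explicit admissible configuration has to be adapted to the boundary signature and Arf invariant of $\phi$.

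Even after fixing these points, what is missing is a real mechanism for the base case $g=5$. In the cited source that role is played by the closed-surface theory behind Theorem \ref{admissiblespin}, not by a Schreier computation.
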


We record the following instances of the \textit{framed} change-of-coordinates principle (a corollary of \cite[Proposition 2.15]{Strata3}). 

\begin{lemma}
Let $(\Sigma,\phi)$ be a framed surface. Let $\alpha$ and $\beta$ be two nonseparating simple closed curves with $\phi(\alpha) = \phi(\beta)$. Then there exists $f \in \Mod(\Sigma)[\phi]$ for which $f(\alpha) = \beta$. 
\end{lemma}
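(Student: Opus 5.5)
The plan is the usual change-of-coordinates argument: first cut $\Sigma$ along $\alpha$ and along $\beta$, then use the framed classification of surfaces to produce a framed diffeomorphism between the two cut surfaces, and finally reglue. Cut $\Sigma$ along $\alpha$ to obtain $\Sigma_\alpha$, and along $\beta$ to obtain $\Sigma_\beta$. Since $\alpha$ and $\beta$ are nonseparating, both cut surfaces are connected of genus $g-1$ with $b+2$ boundary components. By the classical change-of-coordinates principle, there is a diffeomorphism $h_0 : \Sigma \to \Sigma$ with $h_0(\alpha) = \beta$. We may compose with a diffeomorphism so that $h_0$ preserves the orientations of the curves and fixes each component of $\partial \Sigma$ pointwise.

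The main work is to correct $h_0$ so that it preserves $\phi$. The pulled-back framing $\psi = h_0^{-1}\cdot \phi$, i.e.\ $\psi(x)=\phi(h_0(x))$, satisfies $\psi(\alpha) = \phi(\beta) = \phi(\alpha)$. It agrees with $\phi$ on $\alpha$ and, since $h_0$ is the identity on $\partial\Sigma$, also on each boundary component. It therefore suffices to find $k \in \Mod(\Sigma)$ with $k(\alpha)=\alpha$ and $k\cdot\psi = \phi$; then $f = h_0 \circ k^{-1}$ satisfies $f(\alpha)=\beta$ and $f \in \Mod(\Sigma)[\phi]$.

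To find $k$, work in the stabilizer of $\alpha$, which surjects onto $\Mod(\Sigma_\alpha)$. The framings $\phi$ and $\psi$ restrict to framings of $\Sigma_\alpha$ with identical values on all boundary components: both copies of $\alpha$ and the original boundary. We then invoke the classification of framings on a surface with boundary (Calderon–Salter / Kawazumi–Randal-Williams, essentially the content of \cite[Proposition 2.15]{Strata3}). Two framings lie in the same $\Mod$-orbit if and only if they have the same boundary winding numbers and, when the genus is at least $2$, the same generalized Arf invariant. For genus at most $1$ the invariants are different; in genus $1$ one uses the gcd of winding numbers.

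The main obstacle is matching these invariants on $\Sigma_\alpha$. The Arf invariant of a framing on $\Sigma_\alpha$ need not agree with that of $\Sigma$, and there are extra gluing/twisting parameters along $\alpha$, since a framing on $\Sigma$ is not determined by its restriction to $\Sigma_\alpha$. I would handle this as follows. First, apply $T_\alpha$ and twists along curves $\gamma$ meeting $\alpha$ once in order to adjust the values on arcs crossing $\alpha$. By twist-linearity (Proposition~\ref{twistlinear}), $T_\gamma$ changes $\phi(\alpha)$ by $\pm\phi(\gamma)$; choosing $\gamma$ suitably, we can instead modify $h_0$ so that the two framings agree on a full geometric basis. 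To do this, build a chain of curves $a_1,b_1,\dots$ with $a_1=\alpha$, extended using Lemma~\ref{extendframing} handle by handle, and match winding numbers one handle at a time. Since framings are determined by their values on a basis of $H_1$ and on arcs between boundary components, equality on such a basis gives $k\cdot\psi=\phi$.

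Because the paper states the lemma as a corollary of \cite[Proposition 2.15]{Strata3}, in the end I would simply cite that proposition: it asserts that $\Mod(\Sigma)[\phi]$ acts transitively on nonseparating curves with a prescribed winding number. In that case the argument above serves only as a sketch of why that proposition holds.
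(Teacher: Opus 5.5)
Citing \cite[Proposition 2.15]{Strata3} at the end is the paper's entire proof: the lemma is recorded there as a corollary of that proposition, with no further argument. The self-contained sketch you call the main work, however, has a genuine gap exactly where you locate the main obstacle. Your reduction to finding $k$ with $k(\alpha)=\alpha$ and $k\cdot\psi=\phi$ is correct. The proposed remedy does not produce such a $k$, for three reasons. First, a twist $T_\gamma$ with $i(\gamma,\alpha)=1$ moves $\alpha$ (its class becomes $[\alpha]\pm[\gamma]$), so it cannot be a factor of $k$; and nothing needs correcting on $\alpha$, since $\psi(\alpha)=\phi(\alpha)$ already. Second, Lemma~\ref{extendframing} builds framings and yields no mapping class relating two given ones. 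Third, matching winding numbers on a geometric basis through $\alpha$ ``one handle at a time'' is the whole content of the lemma, and it cannot be done naively because the last handle is constrained by the Arf invariant. (Also, it is $\Mod(\Sigma_\alpha)$ that surjects onto the stabilizer of $\alpha$, not conversely.)

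The idea you are missing is that $\mathrm{Arf}(\psi)=\mathrm{Arf}(\phi)$ for free, because $\psi$ is a pullback of $\phi$. A version that works runs as follows. Choose admissible curves $b_1,b_1'$ geometrically dual to $\alpha$ for $\phi$ and $\psi$ respectively (Lemma~\ref{dualcurve}). Move $b_1'$ to $b_1$ by a classical change of coordinates fixing $\alpha$. On the complement $C$ of a neighborhood of $\alpha\cup b_1$, the two framings now have the same boundary winding numbers and the same Arf invariant whenever it is defined, since the total invariants agree and so do the contributions of the pair $(\alpha,b_1)$. The classification of framings on $C$, which is available when $g(C)=g-1\ge 2$, then gives a mapping class of $C$. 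Extending it by the identity finishes $k$, because the framings then agree on curves spanning $H_1(\Sigma;\mathbb{Z})$.

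A genus hypothesis is also unavoidable, for your sketch and for the citation alike: as written, the statement is false in genus one. On $\Sigma_1^1$, every framing differs by a class in $H^1(\Sigma_1^1;\mathbb{Z})$ from the restriction of a flat framing of $T^2$, for which all nonseparating curves have winding number $0$. Hence $\phi(pa+qb)=p\,\phi(a)+q\,\phi(b)$ on nonseparating curves. Take $\phi(a)=1$ and $\phi(b)=0$. Then $\Mod(\Sigma_1^1)[\phi]$ acts on $H_1$ by $(p,q)\mapsto(p,q+kp)$. The curves in the classes $3a+b$ and $3a+2b$ both have winding number $3$, but they lie in different orbits; reversing orientation gives winding number $-3$, so that does not help. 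Your description of \cite[Proposition 2.15]{Strata3} as unconditional transitivity on curves of a given winding number is therefore too strong, and the citation must carry a lower bound on the genus. This is harmless for the paper, which only applies the lemma to surfaces of genus at least $5$.
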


\begin{lemma} \label{dualcurve}
Let $(\Sigma,\phi)$ be a framed surface. Let $\alpha \subset \Sigma$ be a nonseparating simple closed curve. Then there exists an admissible curve $\beta \subset \Sigma$ which is geometrically dual to $\alpha$, that is $i(\alpha,\beta) = |\alpha \cap \beta| = 1$. 
\end{lemma}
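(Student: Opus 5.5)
Given a nonseparating simple closed curve $\alpha$, we produce a curve $\beta$ with $i(\alpha,\beta)=1$ and $\phi(\beta)=0$ in two steps. First we find some geometrically dual curve. Then we correct its winding number by twisting.

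Since $\alpha$ is nonseparating, cutting $\Sigma$ along $\alpha$ gives a connected surface. An arc in this cut surface joining the two copies of $\alpha$ closes up to a simple closed curve $\beta_0$ with $|\alpha\cap\beta_0| = 1$. Because the algebraic intersection is $\pm1$, we have $i(\alpha,\beta_0)=1$, and $\beta_0$ is nonseparating.

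Now orient $\alpha$ and $\beta_0$ so that $\langle \beta_0,\alpha\rangle = 1$. For $n\in\mathbb{Z}$ set $\beta_n = T_\alpha^n(\beta_0)$. Each $\beta_n$ still meets $\alpha$ exactly once transversely, so $i(\alpha,\beta_n)=1$. By twist-linearity (Proposition \ref{twistlinear}),
\[\phi(\beta_n) = \phi(\beta_0) + n\,\phi(\alpha),\]
using that $\langle T_\alpha^k\beta_0,\alpha\rangle = \langle\beta_0,\alpha\rangle$ for every $k$. If $\phi(\alpha)=\pm1$, choosing $n = \mp\phi(\beta_0)$ gives $\phi(\beta_n)=0$, and we are done. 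In general, though, twisting along $\alpha$ only adjusts $\phi(\beta_0)$ modulo $\phi(\alpha)$.

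The main obstacle is correcting the winding number when $|\phi(\alpha)|\neq 1$. I would use the boundary, since $b\ge1$. Pick a boundary component $\partial_1$. Modify $\beta_0$ by a band sum with a curve that runs around $\partial_1$ while staying disjoint from $\alpha$. Such a path exists because $\Sigma\setminus\alpha$ is connected and contains $\partial_1$. Concretely, replace $\beta_0$ by $\beta_0' = T_{\gamma}^{\pm1}(\beta_0)$, where $\gamma$ is a curve with $i(\gamma,\beta_0)=1$ and $\gamma\cap\alpha=\emptyset$. Twist-linearity then shifts $\phi$ by $\pm\phi(\gamma)$ while preserving $i(\alpha,\beta_0')=1$, because the twist is supported away from $\alpha$.

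It therefore suffices to find such a $\gamma$ with $\phi(\gamma)=\pm1$. The candidate is a curve cobounding with $\partial_1$ and $\beta_0$ a pair of pants $P$, that is, the band sum of $\beta_0$ with $\partial_1$. Homological coherence applied to $P$ gives $\phi(\beta_0)-\phi(\beta_0\#\partial_1) = \phi(\partial_1)+1$ up to orientation conventions. Choosing the band on either side of $\beta_0$ shifts $\phi(\beta_0)$ by $\pm(\phi(\partial_1)+1)$, and iterating the construction should reach $0$.

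If the shift $\phi(\partial_1)+1$ does not divide appropriately, I would fall back on the cited framed change-of-coordinates results (\cite[Proposition 2.15]{Strata3}). Their classification of framed orbits of configurations (a nonseparating curve together with a dual curve and prescribed winding numbers) directly asserts that any pair of values $(\phi(\alpha),\phi(\beta))$ is realized in genus at least $2$. Since the paper indicates the lemma is a corollary of that proposition, the final step is to cite it after reducing to the existence of one dual curve.
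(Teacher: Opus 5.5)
The elementary part of your argument does not close, and it fails at the sentence ``iterating the construction should reach $0$.'' Twisting along $\alpha$ and band-summing with boundary components only move $\phi(\beta_0)$ within the coset $\phi(\beta_0)+\gcd\bigl(\phi(\alpha),\phi(\partial_1)+1,\ldots,\phi(\partial_b)+1\bigr)\mathbb{Z}$. If $b=1$, homological coherence forces $\phi(\partial\Sigma)+1=2-2g$. So whenever $\phi(\alpha)$ is even and $\phi(\beta_0)$ is odd (e.g.\ $\alpha=a_1$, $\beta_0=b_1$ with $\phi(a_1)=2$, $\phi(b_1)=1$), these moves never reach $0$. This happens even in genus $\geq 2$, where the lemma is true.

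Your intermediate reduction is valid: it suffices to find $\gamma$ disjoint from $\alpha$ with $i(\gamma,\beta_0)=1$ and $\phi(\gamma)=\pm1$. But you never produce such a $\gamma$. The ``candidate'' band sum of $\beta_0$ with $\partial_1$ cobounds a pair of pants with $\beta_0$, so it is disjoint from $\beta_0$. It is a replacement for $\beta_0$, not a twisting curve; the two moves have been conflated.

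What is missing is any use of the genus of $\Sigma\setminus(\alpha\cup\beta_0)$. For instance, band-summing $\beta_0$ with a nonseparating curve $c$ there shifts the winding number by $\pm(1\pm\phi(c))$, which can be odd. Turning such moves into exact control of $\phi(\beta)$ is essentially the content of the Calderon--Salter classification. Some such input is unavoidable, because the lemma as literally stated, with no genus bound, is false. On $\Sigma_1^1$ with $\phi(a)=2$ and $\phi(b)=1$, every curve meeting $a$ once is $T_a^k(b)$ up to isotopy and orientation. Its winding number $\pm(1\pm 2k)$ is always odd. So $g\geq 2$ is implicitly required; this is harmless in the paper, which applies the lemma only to surfaces of large genus.

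Your fallback is in fact all the paper does: it gives no argument beyond calling the lemma a corollary of \cite[Proposition 2.15]{Strata3}. As you phrase it, though, the citation does not finish the proof by itself. Realizing the values $(\phi(\alpha),0)$ on \emph{some} geometrically dual pair $(\alpha',\beta')$ says nothing yet about the given curve $\alpha$. (This realization is possible for $g\geq 2$, using a second handle to match the Arf invariant when it is defined.) You must then apply the single-curve framed change-of-coordinates lemma stated just before this one to get $f\in\Mod(\Sigma)[\phi]$ with $f(\alpha')=\alpha$, and set $\beta=f(\beta')$. Since $f$ preserves $\phi$, $\beta$ is admissible and $i(\alpha,\beta)=1$. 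Your preliminary construction of $\beta_0$ plays no role in this argument.
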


A result that plays a fundamental role in this paper is a stabilization lemma for framed mapping class groups. 
 \begin{lemma}[Lemma 5.13 in \cite{Strata3}]\label{stabilization}
 Let $(\Sigma,\phi)$ be a framed surface. Let $S \subset \Sigma$ be a subsurface of genus at least 5 and $a \subset \Sigma$ an admissible curve such that $a \cap S$ is a single arc. Let $S^+$ be a regular neighborhood of $S\cup a$. Then 
 $$ \Mod(S^+)[\phi] = \langle \Mod(S)[\phi],T_a\rangle.$$
 \end{lemma}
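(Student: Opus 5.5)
The inclusion $\langle \Mod(S)[\phi], T_a\rangle \leq \Mod(S^+)[\phi]$ is immediate. Elements of $\Mod(S)[\phi]$ extend by the identity to $S^+$ and still preserve the restricted framing. Since $a$ is admissible, $\phi(a)=0$, so twist-linearity (Proposition \ref{twistlinear}) gives $T_a \in \Mod(S^+)[\phi]$. All the work is in the reverse inclusion. Write $\Gamma := \langle \Mod(S)[\phi], T_a\rangle$. Since $S^+$ has genus at least $5$ and nonempty boundary, Theorem \ref{AdmissibleTwists} gives $\Mod(S^+)[\phi] = \mathcal{T}_\phi(S^+)$. It therefore suffices to show that $T_\gamma \in \Gamma$ for every admissible curve $\gamma \subset S^+$.

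The strategy is a framed change-of-coordinates argument, which reduces the problem to checking that $\Gamma$ acts transitively on admissible curves of $S^+$. Suppose $\gamma \subset S^+$ is admissible and $h \in \Gamma$ satisfies $h(\gamma) = \gamma_0$ for a fixed admissible $\gamma_0 \subset S$. Then $T_\gamma = h^{-1}T_{\gamma_0}h$. Because $T_{\gamma_0} \in \mathcal{T}_\phi(S) \leq \Mod(S)[\phi] \leq \Gamma$, this gives $T_\gamma \in \Gamma$.

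To prove transitivity I would use a standard complexity induction.
\begin{itemize}
\item[(i)] I would first reduce to admissible curves contained in $S$ or crossing the arc $a \cap S$ minimally. Here the complexity is the intersection number $i(\gamma, c)$ with a cocore $c$ of the attached 1-handle. Note that $S^+ \setminus S$ is one or two bands, depending on whether $a$ meets $S$ in an arc with endpoints on the same or on different boundary components.
\item[(ii)] If $\gamma$ meets the band, I would use twists along admissible curves built from $a$ and curves in $S$. For example, Lemma \ref{dualcurve} applied inside $S$ produces admissible curves $b \subset S$ dual to the arc of $a$ in $S$. Conjugating $T_a$ by elements of $\Mod(S)[\phi]$ moves $a$ to any admissible curve meeting $S$ in an arc of the same ``type''; the type is determined by the winding number along the arc, which is forced by admissibility and Lemma \ref{extendframing}.
\item[(iii)] Twisting $\gamma$ along such conjugates $f(a)$ reduces $i(\gamma,c)$ whenever $\gamma$ crosses the band more than once with a suitable bigon configuration. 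This is the usual Lickorish/Humphries-type reduction, with the extra constraint that every twist used must be along an admissible curve.
\end{itemize}

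Once $\gamma \subset S$, I would appeal to the framed change-of-coordinates lemma in $S$. One must check that both curves are nonseparating in $S$. If $\gamma$ is separating in $S$ but nonseparating in $S^+$, one further twist by $T_a$ repairs this.

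The main obstacle is step (iii): finding, at each stage, an admissible curve whose twist strictly lowers the complexity. The framing constraint rules out the naive twists, so I expect this to need a careful case analysis of arc types via homological coherence (Proposition \ref{twistlinear}(b)). Failing that, I would use the genus $\geq 5$ hypothesis to embed a configuration of admissible curves generating a spin/framed subgroup that acts transitively on arcs with the given winding data.
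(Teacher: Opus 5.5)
The paper does not prove this statement; it imports it from Calderon--Salter \cite{Strata3}, so your argument has to stand on its own. Your reduction is sound. By Theorem \ref{AdmissibleTwists} applied to $S^+$, it suffices to carry each admissible $\gamma\subset S^+$ into $S$ by some $h\in\Gamma$. Then $T_\gamma=h^{-1}T_{h(\gamma)}h$, and $T_{h(\gamma)}\in\Mod(S)[\phi]$ because $\phi(h(\gamma))=0$. So your final ``repair by $T_a$'' is unnecessary, and it would push the curve back out of $S$ anyway.

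The genuine gap is step (iii): the proposal stops exactly where the content of the lemma begins, and step (iii) is stated as an expectation, not an argument. Note that $\Mod(S)[\phi]$ fixes the cocore $c$ of the band. Also, every element of $\Gamma$ is a product of conjugates $fT_a^{\pm1}f^{-1}=T_{f(a)}^{\pm1}$, with $f\in\Mod(S)[\phi]$, times an element of $\Mod(S)[\phi]$. So the only moves that can lower $i(\gamma,c)$ are twists about admissible curves that cross the band exactly once and whose arc in $S$ lies in the $\Mod(S)[\phi]$-orbit of $a\cap S$. You would have to show that for every admissible $\gamma$ with $i(\gamma,c)\geq 2$, one of these strictly reduces $i(\gamma,c)$.

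The natural Lickorish-type candidates, obtained by closing up a subarc of $\gamma\cap S$ through the band, need not be admissible. Since the arcs of $\gamma\cap S$ can cut $S$ into planar pieces, there is no evident room in $S\setminus\gamma$ to correct their winding numbers. The ``careful case analysis'' you defer is therefore the whole proof. Your fallback, a configuration acting transitively on arcs with given winding data, is an unproved claim of the same strength. Some genuinely global input is needed here. One example is connectivity of a suitable graph of admissible curves or arcs in $S^+$, used to propagate transitivity from a vertex to its neighbours. The proposal supplies none.

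There are two smaller points.

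First, even the base case $i(\gamma,c)=1$ needs a framed change-of-coordinates principle for arcs in $S$ with fixed endpoints, taken with respect to the framing rel $\partial S$. The closed-curve change-of-coordinates lemma quoted in Section 3 does not give this. Lemma \ref{dualcurve} produces curves dual to closed curves, not to arcs. You would need the general configuration version in \cite{Strata3} and would have to check its hypotheses.

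Second, the inclusion you call immediate depends on reading $\Mod(S)[\phi]$ as the stabilizer of the framing up to isotopy rel $\partial S$, as Calderon--Salter do. Suppose instead that only winding numbers of closed curves are recorded, as in Definition \ref{framedmcg}, and that $a$ joins distinct boundary components $\Delta_1\neq\Delta_2$ of $S$. Then the boundary twist $T_{\Delta_1}$ fixes every closed curve in $S$ up to isotopy, yet $\phi(T_{\Delta_1}(a))=\phi(a)\pm\phi(\Delta_1)$ in $S^+$. So its extension lies outside $\Mod(S^+)[\phi]$ whenever $\phi(\Delta_1)\neq 0$.
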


\subsection{$\mathbb{Z}/r\mathbb{Z}$ winding number functions.} Recall that $\mathcal{S}(\Sigma)$ denotes the set of oriented isotopy classes of simple closed curves on $\Sigma$.

\begin{defn}
A $\mathbb{Z}/r\mathbb{Z}$ \textit{winding number function} $\psi$ on $\Sigma$ is a function \[\psi : \mathcal{S}(\Sigma) \to \mathbb{Z}/r\mathbb{Z}\] satisfying twist-linearity and homological coherence modulo $r$ (cf. Proposition \ref{twistlinear}).
\end{defn}

\begin{rmk}
As a consequence of homological coherence, winding number functions satisfy \textit{reversibility}. For $c \in \mathcal{S}(\Sigma)$, let $\overline{c} \in \mathcal{S}(\Sigma)$ denote $c$ with the opposite orientation. Then $\psi(\overline{c}) = -\psi(c) \in \mathbb{Z}/r\mathbb{Z}$. 
\end{rmk}

\begin{eg} \label{reduceframing} A $\mathbb{Z}/r\mathbb{Z}$ winding number function may be constructed as follows \cite[Definition 2.6]{Strata3}. Consider a framed surface $(\Sigma,\phi)$. Let $\overline{\Sigma}$ denote the surface obtained by capping off boundary components $\Delta_1,\ldots,\Delta_n$ of $\Sigma$ with closed disks (note $\overline{\Sigma}$ may still have boundary). Orient each component $\Delta_k$ so that $\Sigma$ is to the left. Isotoping a curve on $\Sigma$ across the disk bounding $\Delta_k$ changes the $\phi$-framing by $\phi(\Delta_k)+1$ (homological coherence). Let $r = \gcd(\phi(\Delta_1)+1,\ldots,\phi(\Delta_n)+1)$ and consider $\overline{\phi} : \mathcal{S}(\overline{\Sigma}) \to \mathbb{Z}/r\mathbb{Z}$ defined by \[\overline{\phi}(\gamma) = \phi(\gamma) \mod r.\] We say the winding number function $\overline{\phi}$ is obtained by \textit{reducing} $\phi$ mod $r$.  
\end{eg}

\begin{defn}
A nonseparating simple closed curve $\gamma \subset 
\Sigma$ is \textit{admissible} under $\psi : \mathcal{S}(\Sigma) \to \mathbb{Z}/r\mathbb{Z}$ if $\psi(\gamma) = 0 \in \mathbb{Z}/r\mathbb{Z}$. 
\end{defn}

As in the framed settting, we define the admissible subgroup \[\mathcal{T}_{\psi} = \langle T_{\gamma} \,|\, \gamma \subset \Sigma\;\; \psi-\textrm{admissible}\rangle.\] The set of $\mathbb{Z}/r\mathbb{Z}$ winding number functions carries an action of the mapping class group via pullback. 
\begin{defn} \label{rspinmcg}
The stabilizer of a $\mathbb{Z}/r\mathbb{Z}$ winding number function $\psi$ on $\Sigma$ is denoted
\[\Mod(\Sigma)[\psi] = \{ f\in \Mod(\Sigma) \,|\, (f\cdot \psi) = \psi\}.\]
\end{defn}

\begin{rmk}
The stabilizer of a $\mathbb{Z}/r\mathbb{Z}$ winding number function is called an \textit{r-spin mapping class group}. The terminology is motivated by a bijective correspondence between $\mathbb{Z}/r\mathbb{Z}$ winding number functions and \textit{r-spin structures} on $\Sigma$ \cite[Proposition 3.6]{IshanNick}. Let $UT\Sigma \to \Sigma$ be the unit tangent bundle of $\Sigma$.\footnote{Relative to an auxillary Riemannian metric.} An $r$-\textit{spin structure} on $\Sigma$ is a cohomology class in $H^1(UT\Sigma;\mathbb{Z}/r\mathbb{Z})$ which assigns the value $1\in \mathbb{Z}/r\mathbb{Z}$ to the class of a positively oriented fiber.
\end{rmk}

For closed surfaces we have an analogue of Theorem \ref{AdmissibleTwists}.

\begin{theorem}[Proposition 5.1 in \cite{CalderonSalter}]
Assume that $\Sigma$ is closed and has genus $g(\Sigma) \geq 3$. Let $\psi$ be a $\mathbb{Z}/r\mathbb{Z}$ winding number function on $\Sigma$. Then \[\Mod(\Sigma)[\psi] = \mathcal{T}_{\psi}.\]
\label{admissiblespin}
\end{theorem}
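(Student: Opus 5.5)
The inclusion $\mathcal{T}_\psi \leq \Mod(\Sigma)[\psi]$ is immediate. For an admissible curve $\gamma$, twist-linearity mod $r$ gives $\psi(T_\gamma(b)) = \psi(b) + \langle b,\gamma\rangle\psi(\gamma) = \psi(b)$ for every $b$, so $T_\gamma$ fixes $\psi$. All the work is in the reverse inclusion. My plan is to reduce the closed $r$-spin statement to the framed statement, Theorem \ref{AdmissibleTwists}, by puncturing, and to use a separate argument only in low genus.

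\textbf{Step 1 (lifting $\psi$ to a framing).} Fix a disk $D \subset \Sigma$ with boundary $\Delta$, and set $\Sigma^1 = \Sigma \setminus \mathrm{int}(D)$. Homological coherence on $\Sigma$ forces $r \mid 2g-2$. I would choose a framing $\phi$ of $\Sigma^1$ whose winding numbers on a standard system of arcs and curves reduce mod $r$ to $\psi$; this is possible by building $\Sigma^1$ from a disk by $1$-handles and applying Lemma \ref{extendframing} repeatedly. Homological coherence gives $\phi(\Delta) = \chi(\Sigma^1) = 1-2g$, so $\phi(\Delta)+1 = 2-2g$ is divisible by $r$. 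Example \ref{reduceframing} then shows that $\psi$ is the reduction of $\phi$ mod $r$ (more precisely, its reduction mod $2g-2$, further reduced mod $r$). Every $\phi$-admissible curve is therefore $\psi$-admissible, and the capping homomorphism $\mathrm{cap}:\Mod(\Sigma^1)\to\Mod(\Sigma)$ carries $\mathcal{T}_\phi$ into $\mathcal{T}_\psi$.

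\textbf{Step 2 (surjectivity onto the stabilizer).} Given $f \in \Mod(\Sigma)[\psi]$, choose any lift $\tilde f \in \Mod(\Sigma^1)$. Then $\tilde f\cdot\phi$ and $\phi$ are framings of $\Sigma^1$ with the same reduction mod $r$. Their difference is a class $c \in H^1(\Sigma;\mathbb{Z})$ with $c \equiv 0 \bmod r$.

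I would correct $\tilde f$ by an element of the disk-pushing subgroup $\pi_1(UT\Sigma)$. As in the proof of Proposition \ref{relboundaryhirose}, pushing along a loop $\gamma$ acts as $T_{\gamma_L}T_{\gamma_R}^{-1}T_\Delta^k$. By twist-linearity, this changes $\phi$ on a curve $b$ by $(\phi(\Delta)+1)\langle b,\gamma\rangle$, which is $(2-2g)\langle b,\gamma\rangle$. This adjustment realizes only multiples of $2g-2$, not all multiples of $r$. This gap is the real content of the statement, and I would close it in one of two ways:
\begin{itemize}
\item choose the lift $\tilde f$ cleverly from the start, using that framings of $\Sigma^1$ reducing to $\psi$ fall into finitely many $\Mod(\Sigma^1)$-orbits and that $\mathcal{T}_\psi$ already contains $\mathrm{cap}(\mathcal{T}_{\phi'})$ for every such lift $\phi'$; or
\item show directly that $\mathcal{T}_\psi$ contains $\mathrm{cap}$ of the extra twists $T_{\gamma_L}T_{\gamma_R}^{-1}$ with $\gamma$ $\psi$-admissible.
\end{itemize}
Once the framing discrepancy is killed, $\tilde f \in \Mod(\Sigma^1)[\phi] = \mathcal{T}_\phi$ by Theorem \ref{AdmissibleTwists} (valid for $g\ge 5$), and so $f \in \mathcal{T}_\psi$.

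\textbf{Step 3 (low genus, $g=3,4$), the expected main obstacle.} Theorem \ref{AdmissibleTwists} is unavailable here. I would instead argue by transitivity and then stabilizers:
\begin{itemize}
\item \emph{Transitivity.} Show that $\mathcal{T}_\psi$ acts transitively on oriented nonseparating $\psi$-admissible curves. For admissible $a,b$ with $i(a,b)=1$, the braid relation gives $T_aT_b(a)=b$. It remains to show the graph on admissible curves with edges for intersection number one is connected, using the $\mathbb{Z}/r$ analogue of Lemma \ref{dualcurve}.
\item \emph{Reduction to a stabilizer.} For $f\in\Mod(\Sigma)[\psi]$ and a fixed admissible $a$, compose with an element of $\mathcal{T}_\psi$ so that $f(a)=a$, preserving orientation.
\item \emph{Stabilizer of $a$.} Cut along $a$ to get $\Sigma_{g-1}^2$ with the induced winding number function. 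Its stabilizer is generated by the stabilizer of the cut surface together with $T_a$, and I would generate it by induction or by an explicit finite generating set from Humphries generators.
\end{itemize}
I expect the genus-3 base case, where the cut surface has genus 2, to require a direct check that explicit generators of the stabilizer are products of admissible twists.
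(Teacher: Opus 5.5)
The paper gives no proof of this statement; it quotes it from Calderon--Salter. Your proposal therefore has to stand on its own, and as written it does not close.

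\textbf{Step 2.} The first gap is in Step 2, and it is exactly the one you flag. Let $\Phi$ denote the $\mathbb{Z}/(2g-2)\mathbb{Z}$ winding number function on $\Sigma$ obtained by reducing $\phi$. Your push-map computation does show $\mathrm{cap}(\Mod(\Sigma^1)[\phi]) = \Mod(\Sigma)[\Phi]$, and hence $\Mod(\Sigma)[\Phi] \le \mathcal{T}_\psi$ for $g \ge 5$. When $r$ is a proper divisor of $2g-2$, however, an $f \in \Mod(\Sigma)[\psi]$ with $f\cdot\Phi \neq \Phi$ has no lift in $\Mod(\Sigma^1)[\phi]$ at all.

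What you actually need is the following. The $\mathbb{Z}/(2g-2)\mathbb{Z}$ lifts of $\psi$ form a torsor for $H^1(\Sigma;\mathbb{Z}/m\mathbb{Z})$, where $m=(2g-2)/r$. You must show that $\mathcal{T}_\psi$ acts transitively on the orbit $\Mod(\Sigma)[\psi]\cdot\Phi$ inside this set of lifts. Granting that, choose $h \in \mathcal{T}_\psi$ with $h\cdot\Phi = f\cdot\Phi$; then $h^{-1}f \in \Mod(\Sigma)[\Phi] \le \mathcal{T}_\psi$.

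Neither of your bullets supplies this transitivity.
\begin{itemize}
\item Finiteness of the set of orbits says nothing about transitivity of $\mathcal{T}_\psi$, or of the group generated by all the $\mathrm{cap}(\mathcal{T}_{\phi'})$.
\item The second bullet is vacuous. The curves $\gamma_L$ and $\gamma_R$ are isotopic in $\Sigma$, so $\mathrm{cap}(T_{\gamma_L}T_{\gamma_R}^{-1}) = 1$. More generally, everything in the push subgroup changes $\phi$ only by multiples of $2g-2$.
\end{itemize}
The elements that do move between lifts are twists $T_a$ about curves with $\Phi(a) \equiv 0 \bmod r$ but $\Phi(a) \neq 0$. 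These lie in $\mathcal{T}_\psi$ and satisfy $T_a\cdot\Phi = \Phi - \Phi(a)\,\langle\,\cdot\,,a\rangle$. You would have to show that such moves connect any two lifts in the same $\Mod(\Sigma)[\psi]$-orbit. This requires tracking the Arf invariant of the mod $2$ reduction when $r$ is odd. That combinatorial argument is the missing content for $g \ge 5$.

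\textbf{Step 3 ($g=3,4$).} This is a plan rather than a proof.
\begin{itemize}
\item You assert, but do not prove, that the graph of $\psi$-admissible curves with edges for $i(a,b)=1$ is connected. A $\mathbb{Z}/r\mathbb{Z}$ analogue of Lemma \ref{dualcurve} alone does not give connectivity.
\item After cutting along $a$, you need the stabilizer of the induced $\mathbb{Z}/r\mathbb{Z}$ winding number function on $\Sigma_{g-1}^2$ (genus $2$ or $3$) to be generated by admissible twists. This is a statement about a surface with boundary, in genus below the range of Theorem \ref{AdmissibleTwists}. It is also not a formal consequence of the closed statement you are inducting on, so the induction has no available input.
\item The base case is explicitly deferred to an unspecified direct check.
\end{itemize}

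As it stands, your argument proves the theorem only for $r = 2g-2$ with $g \ge 5$. It gives no argument for other $r$, and none for $g=3,4$.
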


\textbf{Quadratic forms and $\mathbb{Z}/2\mathbb{Z}$ winding number functions.} 
In the sequel we require passage between quadratic forms and winding number functions on surfaces with boundary. Following \cite{Johnsonspin} and Section 2.3 of \cite{Randal-Williams}, a calculation gives
\begin{proposition} \label{QuadraticFormSpin}
A winding number function $\psi : \mathcal{S}(\Sigma) \to \mathbb{Z}/2\mathbb{Z}$ defines a quadratic form $q_{\psi} : H_1(\Sigma;\mathbb{Z}/2\mathbb{Z}) \to \mathbb{Z}/2\mathbb{Z}$ by the requirement that 
\[q_{\psi}(\gamma) = \psi(\gamma) + 1\mod 2\]
for $\gamma \in \mathcal{S}(\Sigma)$. Conversely, a quadratic form on $H_1(\Sigma;\mathbb{Z}/2\mathbb{Z})$ defines a $\mathbb{Z}/2\mathbb{Z}$ winding number function by the same formula. Moreover, quadratic forms on $H_1(\Sigma;\mathbb{Z}/2\mathbb{Z})$ are determined by their values on a generating set for $H_1(\Sigma;\mathbb{Z}/2\mathbb{Z})$.  
\end{proposition}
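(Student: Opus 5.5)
We need to prove Proposition \ref{QuadraticFormSpin}. It has three parts: a $\mathbb{Z}/2\mathbb{Z}$ winding number function $\psi$ gives a quadratic form via $q_\psi(\gamma)=\psi(\gamma)+1$; conversely a quadratic form gives a winding number function by the same formula; and quadratic forms are determined by their values on a generating set of $H_1(\Sigma;\mathbb{Z}/2\mathbb{Z})$.

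\textbf{The last assertion.} This is immediate from the defining identity $q(x+y)=q(x)+q(y)+\langle x,y\rangle$. By induction on the number of summands, $q(\sum x_i)=\sum q(x_i)+\sum_{i<j}\langle x_i,x_j\rangle$. Since every class is a sum of generators, $q$ is determined by its values on them; in particular $q(0)=0$.

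\textbf{From $\psi$ to $q_\psi$.} First, mod $2$ the value $\psi(\gamma)$ does not depend on the orientation of $\gamma$, by reversibility. I would then show that $\psi(\gamma)+1$ depends only on $[\gamma]\in H_1(\Sigma;\mathbb{Z}/2\mathbb{Z})$. Rather than proving well-definedness directly, the cleanest route is the correspondence with spin structures. A $\mathbb{Z}/2$ winding number function corresponds to a class $\xi\in H^1(UT\Sigma;\mathbb{Z}/2)$ with $\xi(\text{fiber})=1$, via \cite[Proposition 3.6]{IshanNick}, by $\psi(\gamma)=\xi(\vec\gamma)$. Johnson's theorem \cite{Johnsonspin} then says that $q(\gamma)=\xi(\vec\gamma)+1$, extended by $q(\sum\gamma_i)=\sum(\xi(\vec\gamma_i)+1)$ over disjoint unions of simple closed curves, is a well-defined quadratic form on $H_1(\Sigma;\mathbb{Z}/2)$. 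The key input there is that the lifts $\vec\gamma$, plus one fiber for each crossing resolved, give a homomorphism-compatible assignment. Concretely, the quadratic relation follows as follows. Take $x,y$ represented by simple closed curves $a,b$ meeting transversely in a single point, with $\langle a,b\rangle=1$. The curve $T_a(b)$ represents $x+y$, and twist-linearity gives $\psi(T_a(b))=\psi(b)+\psi(a)$. Hence $q(x+y)=q(x)+q(y)+1$. Disjoint curves are handled by homological coherence applied to the pair of pants they cobound with a band sum; this yields $\psi(a\#b)=\psi(a)+\psi(b)+1$, i.e.\ $q$ is additive there. The general case uses the surface analogue of change of coordinates to reduce to these configurations.

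\textbf{From $q$ to $\psi$.} Given $q$, define $\psi(\gamma)=q([\gamma])+1$. Homological coherence mod $2$ is the statement that, for a subsurface with boundary curves $c_1,\dots,c_k$, we have $\sum_i(q(c_i)+1)=\chi(S)\equiv k$ mod $2$ (when $S$ has genus $g$, $\chi=2-2g-k\equiv k$). Since the boundary classes sum to $0$ and are pairwise disjoint, additivity gives $\sum_i q(c_i)=q(0)=0$, so this holds. Twist-linearity mod $2$ holds because $[T_a(b)]=[b]+\langle b,a\rangle[a]$. If $\langle b,a\rangle=0$ there is nothing to check; otherwise the quadratic identity gives $q(T_a b)+1=q(b)+1+q(a)+1$, which is exactly $\psi(b)+\psi(a)$.

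\textbf{Main obstacle.} The delicate step is well-definedness of $q_\psi$ on homology, i.e.\ that homologous simple closed curves have the same $\psi$ mod $2$. This requires genuinely invoking Johnson's argument (or the $UT\Sigma$ correspondence) rather than only the axioms on special configurations. I would cite \cite{Johnsonspin} and \cite[Section 2.3]{Randal-Williams} for this, and carry out the twist-linearity and coherence checks explicitly.
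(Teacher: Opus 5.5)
Your proposal is correct and follows the paper's route. The paper gives no argument beyond citing \cite{Johnsonspin} and Section 2.3 of \cite{Randal-Williams}, and you likewise rest the one substantive point (that $\psi(\gamma)+1$ depends only on $[\gamma]\in H_1(\Sigma;\mathbb{Z}/2\mathbb{Z})$) on Johnson's theorem via the spin-structure correspondence, while your explicit checks of the converse (coherence via $\sum_i q(c_i)=q(0)=0$, twist-linearity via the quadratic identity) and of the generating-set claim are sound and simply fill in the ``calculation'' the paper leaves implicit.
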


\begin{rmk} \label{admissibleunderform}
In the presence of a quadratic form $q : H_1(\Sigma;\mathbb{Z}/2\mathbb{Z}) \to \mathbb{Z}/2\mathbb{Z}$, a nonseparating simple closed curve $\gamma \subset \Sigma$ is called \textit{admissible} if $q(\gamma) = 1 \mod 2$. 
\end{rmk}

\section{(Spin) Flexible Surfaces in $\mathbb{C}P^2$ and $S^2 \times S^2$}

We begin by considering a family of framed Seifert surfaces, following \cite{Ferretti}. After stabilizing with an unknotted surface of sufficiently large genus, we use the inductive techniques recalled in Section 3 to prove Lemma \ref{twistson1curves}. As a consequence we obtain Theorem \ref{thmA}. \newline

\textbf{Seifert surfaces for homogeneous braid closures}. Recall that a braid $\beta$ on $n$ strands can be expressed as a word in $n$ generators $\sigma_1,\ldots,\sigma_{n-1}$, where the letter $\sigma_i$ consists of a single over-crossing between the $(i+1)$-st and $i$-th strands. 

\begin{defn}[\cite{Stallings}]
A word $\beta = \sigma_{\iota_1}^{\varepsilon_1}\cdots \sigma_{\iota_k}^{\varepsilon_k}$, $\varepsilon_i = \pm 1$, is \textit{homogeneous} if every letter $\sigma_i$ occurs at least once, and the exponents of all occurences of a given letter $\sigma_i$ are the same.
\end{defn}

Following Stallings \cite{Stallings}, a Seifert surface $F_\beta$ is constructed for the closure of $\beta$ using a stack of $n$ disks, joining adjacent disks with half-twisted bands. See Figure \ref{FramedSurface}. We prefer to use half curls in our bands. Our conventions are that positive letters correspond to down rightward curls and negative letters correspond to down leftward curls. 

\begin{rmk}
For a homogeneous braid word $\beta$, $F_\beta$ is a \textit{fibered surface} in $S^3$. In fact, $F_\beta$ is obtained by \textit{plumbing} a sequence of positive/negative Hopf bands (see \cite{Stallings} for details).  
\end{rmk}

Any adjacent pair of bands attached to the same pair of disks determines a simple closed curve on $F_\beta$ by connecting the cores of the bands with simple arcs in the disks. We write $\mathcal{C}_{F_\beta}$ for the finite collection of all such (isotopy classes of) simple closed curves. See Figure \ref{FramedSurface}. We call $\mathcal{C}_{F_\beta}$ the \textit{braid word assemblage} on $F_\beta$. 

\begin{defn}[\cite{Ferretti}]
Define \[\mathcal{T}_{\mathcal{C}_{F_\beta}} = \langle T_{\gamma} \,|\, \gamma \in \mathcal{C}_{F_\beta}\rangle.\] We call $\mathcal{T}_{\mathcal{C}_{F_\beta}}$ the \textit{geometric monodromy group} of the braid word $\beta$.  
\end{defn}

Orient curves in $\mathcal{C}_{F_\beta}$ as in Figure \ref{FramedSurface}. As $\beta$ is homogeneous, each curve in $\mathcal{C}_{F_\beta}$ is a Hopf band curve (Definition \ref{Hopfbandcurve}). By construction, the surface $F_\beta$ deformation retracts onto the union of the curves in $\mathcal{C}_{F_\beta}$. \newline

\textbf{Framing the surface $F_\beta$}. Following \cite{Ferretti}, we frame the surface $F_\beta \subset S^3$ through the nowhere vanishing vector field depicted in Figure \ref{FramedSurface}.  Let \[\phi_\beta : \mathcal{S}(F_\beta) \to \mathbb{Z}\] be the associated winding number function on the set of oriented isotopy classes of simple closed curves on $F_\beta$. The constructed vector field is parallel to the core of the 1-handles and so contribution to the winding number $\phi_\beta$ takes place only in the 0-handles. Consider an oriented simple closed curve $\gamma \subset F_\beta$. Let $a_R^{\gamma}$ (resp. $a_L^{\gamma}$) denote the number of `rightward' (resp. `leftward') arcs of $\gamma$ in the 0-handles of $F_{\beta}$. We compute  \[\phi_\beta(\gamma) = \frac{1}{2}(a_L^{\gamma}-a_R^{\gamma}).\] Any curve in $\mathcal{C}_{F_\beta}$ is therefore admissible under the framing and there is the sequence $$\mathcal{T}_{\mathcal{C}_{F_\beta}} \leq \mathcal{T}_{\phi_\beta} \leq \Mod(F_\beta)[\phi_\beta].$$ Ferretti proves \cite[Theorem 2]{Ferretti} that for a sufficiently general collection of positive braids\footnote{All but finitely many prime positive braids not of type $A_n$.} $\beta$ whose closure is a knot, the above is a sequence of equalities, namely  \[\mathcal{T}_{\mathcal{C}_{F_\beta}} = \Mod(F_\beta)[\phi_\beta].\] 

\begin{corollary} \label{framedlowerbound}
Let $F_\beta'$ denote the result of pushing the interior of $F_\beta$ into the interior of $B^4$. Pull back the framing $\phi_\beta$ to a framing $\phi_\beta'$ of $F_\beta'$. Then, for a braid $\beta$ to which Theorem 2 of \cite{Ferretti} applies, we have \[\Mod(F_\beta')[\phi_\beta'] \leq \mathcal{E}_\partial(B^4,F_\beta').\] 
\end{corollary}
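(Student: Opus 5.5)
The plan is to show that the whole framed mapping class group $\Mod(F_\beta')[\phi_\beta']$ is generated by Dehn twists that each extend, and then to exhibit each such twist as an extendable mapping class via Proposition \ref{HiroseHopfBand}.

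First, the pushing isotopy gives a diffeomorphism $F_\beta' \cong F_\beta$. This diffeomorphism intertwines the mapping class groups rel boundary, and it carries $\phi_\beta'$ to $\phi_\beta$ by definition. Hence it identifies $\Mod(F_\beta')[\phi_\beta']$ with $\Mod(F_\beta)[\phi_\beta]$. By Theorem 2 of \cite{Ferretti}, the latter equals the geometric monodromy group $\mathcal{T}_{\mathcal{C}_{F_\beta}}$. So $\Mod(F_\beta')[\phi_\beta']$ is generated by the Dehn twists $T_{\gamma'}$, where $\gamma'$ is the pushed-in copy of a curve $\gamma \in \mathcal{C}_{F_\beta}$. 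Since $\mathcal{E}_\partial(B^4,F_\beta')$ is a subgroup, it then suffices to show $T_{\gamma'} \in \mathcal{E}_\partial(B^4,F_\beta')$ for each $\gamma \in \mathcal{C}_{F_\beta}$.

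Second, fix $\gamma \in \mathcal{C}_{F_\beta}$. Because $\beta$ is homogeneous, $\gamma$ is a Hopf band curve: its closed regular neighborhood $A = \overline{\nu_{\gamma\subset F_\beta}}$ is a positive or negative Hopf band in $S^3$. Realize $F_\beta'$ as $\partial F_\beta \times [0,1/2] \cup \mathrm{int}(F_\beta)\times\{1/2\}$ in a collar, with $\gamma \subset S^3\times\{1/2\}$. The lemma following Proposition \ref{HiroseHopfBand} then produces a diffeomorphism of $B^4$. This diffeomorphism is the identity on $\partial B^4$ (after conjugating by the pushing isotopy $\psi$), preserves $F_\beta'$, and restricts to $T_{\gamma'}$. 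In detail: push the interior of $A$ slightly deeper into $B^4$, so that a small ball meets $F_\beta'$ exactly in a copy of the pushed Hopf band $A'$. Apply Hirose's diffeomorphism there, which is the identity on the boundary of that ball, and extend it by the identity elsewhere. Hence $T_{\gamma'} \in \mathcal{E}_\partial(B^4,F_\beta')$.

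The main point needing care is the localization in this second step. We must check that the neighborhood of $A$ can be isolated in a sub-ball $B' \subset B^4$ with $B' \cap F_\beta' = A''$ a properly embedded pushed-in Hopf band, so that Proposition \ref{HiroseHopfBand} applies rel $\partial B'$. The approach is to take a small $3$-ball neighborhood $N$ of $A$ in the slice $S^3\times\{1/2\}$ and thicken it to $N\times[1/2-\epsilon,1/2+\epsilon]$. Inside it, $F_\beta'$ is $(F_\beta\cap N)\times\{1/2\}$, and $F_\beta \cap N$ is $A$ together with some extra disks where other bands enter. To avoid these extra pieces, isotope $A$'s interior alone further into $B^4$ first, as the lemma's $\psi$ does: the bottom level then meets only $\partial A$, and we get a ball meeting the surface in $\partial A\times I \cup A$, exactly the model $(B^4,A')$. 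The remaining bookkeeping is routine: the isotopy is supported away from $\partial B^4$, and conjugating by $\psi$ does not change the induced mapping class on $F_\beta'$ under the identification.
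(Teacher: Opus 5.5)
Your argument is correct and is the one the paper intends; the corollary is stated without proof. Ferretti's Theorem~2 gives $\Mod(F_\beta)[\phi_\beta]=\mathcal{T}_{\mathcal{C}_{F_\beta}}$, each curve of $\mathcal{C}_{F_\beta}$ is a Hopf band curve, and Proposition~\ref{HiroseHopfBand} together with the localization lemma following it makes each generating twist extendable rel boundary. The one slip is cosmetic: a regular neighborhood of an annulus in $S^3$ is a solid torus, not a $3$-ball, but the ball you actually need is the sub-ball of $B^4$ bounded by a collar level strictly between $1/2$ and the depth to which the interior of $A$ has been pushed, and that lemma already supplies it.
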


We use Corollary \ref{framedlowerbound} in proving Theorem \ref{ThmB}.

\begin{figure}
    \centering
    \includegraphics[width=0.65\linewidth]{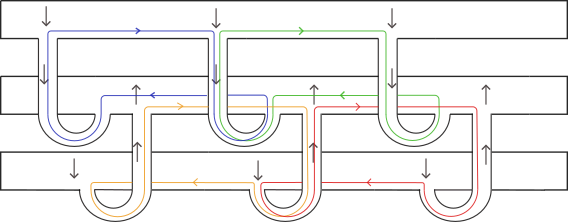}
    \caption{\textnormal{The framed Seifert surface $F_\beta$ associated to the word $\beta = (\sigma_1\sigma_2^{-1})^3$ together with its braid word assemblage of Hopf band curves.}}
    \label{FramedSurface}
\end{figure}

\subsection{An integral lift of the linking form on a stabilization of $F_\beta$.} \label{integralliftsection}{ Let $\Sigma_g \subset S^3 \backslash F_\beta$ be the boundary of a genus $g$ handlebody. Consider a standard internal connect sum of $F_\beta$ with $\Sigma_g$ along a smoothly embedded disk $D_0 \subset F_{\beta}$. We assume the disk $D_0$ sits inside a 0-handle of $F_\beta$ which has at least a pair of bands attached along its boundary (if no such 0-handle exists the surface $F_\beta$ is a disk). Denote the resulting surface \[S_{\beta,g} = F_\beta \# \Sigma_g \subset S^3.\] Let $\widehat{F}_\beta = F_\beta - \textrm{int}(D_0) \subset S_{\beta,g}$ and $\widehat{\Sigma}_g = \Sigma_g - \textrm{int}(D_0) \subset S_{\beta,g}$. The framing $\phi_{\beta}$ of $F_{\beta}$ restricts to the subsurface $\widehat{F}_\beta$, denoted again by $\phi_{\beta}$. This framing does not extend over the surface $S_{\beta,g}$. Indeed, winding number functions satisfy homological coherence. On $F_{\beta}$ the curve $\partial D_0$ bounds a disk, whilst on $S_{\beta,g}$ it bounds a surface of genus $g$.

We circumvent this obstruction by removing a small disk $D_1$ in the interior of the unknotted piece $\widehat{\Sigma}_g$. Write  \[P_{\beta,g} = S_{\beta,g} - \textrm{int}(D_1).\] The framing $\phi_\beta$ now extends, by Lemma \ref{extendframing}, to a framing $\phi_\beta^+$ of $P_{\beta,g}$ using the chain of curves $\mathcal{U}_g = \{c_1,\ldots,c_{2g-1}\} \subset \widehat{\Sigma}_g$ and the curve $\mu \subset S_{\beta,g}$ depicted in Figure \ref{stabilizingseifertsurface}, setting $\phi_\beta^+(\mu) = 0$ and $\phi_\beta^+(c_k) = 1$ for each $k \in \{1,\ldots,2g-1\}$. The curve $\mu$ is a homologically the sum of a curve $\gamma \subset \mathcal{C}_{F_\beta}$ and a meridian in $\widehat{\Sigma}_g$. The essential property of this extension is that the resulting framing is an integral lift of the linking form on the Seifert surface $S_{\beta,g}$. We now consider the winding number function induced by the framing $\phi^+_\beta$ on the surface $S_{\beta,g}$ (as in Example \ref{reduceframing}).  \label{SPFDefns}}

\begin{figure}
  \centering
  \def\svgwidth{0.85\columnwidth}
  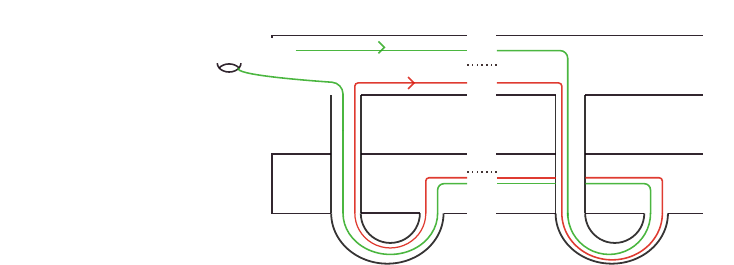
  \caption{\textnormal{We stabilize $F_\beta$ along the chain of curves $\{\mu,c_1,\ldots,c_{2g-1}\}$}. }
  \label{stabilizingseifertsurface}
\end{figure}

Henceforth suppress notational dependency on $\beta$ and $g$, writing $S = S_{\beta,g}$, $P = P_{\beta,g}$ and $F = F_{\beta}$.  The framing $\phi^+$ induces a winding number function \[\psi  : \mathcal{S}(S) \to \mathbb{Z}/2g\mathbb{Z}\] on $S$. To see this, orient the curves $\partial D_0$ and $\partial D_1$ so that $\widehat{\Sigma}_g$ is to the left. By homological coherence, \[\phi^+(\partial D_1) + \phi^+(\partial D_0)  = 2-2g - 2 = -2g\] and so $\phi^+(\partial D_1) +1 = -2g$. Let $\overline{\psi} : \mathcal{S}(S) \to \mathbb{Z}/2\mathbb{Z}$ denote the mod 2 reduction of $\psi$; in particular, $\overline{\psi} = \phi^+ \mod 2$ for curves in $P$. Recall that $S = F \# \Sigma_g\subset S^3$ is a Seifert surface for the link $\partial F$, $\theta_S : H_1(S;\mathbb{Z}/2\mathbb{Z}) \to \mathbb{Z}/2\mathbb{Z}$ denotes the mod 2 linking form on $S$, and that  $\overline{\psi} +1 : H_1(S;\mathbb{Z}) \to \mathbb{Z}/2\mathbb{Z}$ defines a quadratic form   (Proposition \ref{QuadraticFormSpin}).

\begin{proposition} \label{framingmod2}
The  quadratic form $\overline{\psi}+1$ agrees with the linking form $\theta_S$ on $H_1(S;\mathbb{Z}/2\mathbb{Z})$. Precisely  \[\overline{\psi}(\gamma) +1= \theta_S(\gamma)   \mod 2\]for a simple closed curve $\gamma \subset S$.  
\end{proposition}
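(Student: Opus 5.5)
Both $\overline{\psi}+1$ and $\theta_S$ are quadratic forms on $H_1(S;\mathbb{Z}/2\mathbb{Z})$ with respect to the mod 2 intersection pairing. The first is a quadratic form by Proposition \ref{QuadraticFormSpin}, since $\overline{\psi}$ is a $\mathbb{Z}/2\mathbb{Z}$ winding number function on $S$ obtained by reducing $\phi^+$ (Example \ref{reduceframing}). The second is the standard Seifert self-linking form, which satisfies $\theta_S(x+y)=\theta_S(x)+\theta_S(y)+\langle x,y\rangle$ because $\mathrm{lk}(x,y^+)-\mathrm{lk}(y,x^+)=\langle x,y\rangle$. By the last clause of Proposition \ref{QuadraticFormSpin}, two quadratic forms agree once they agree on a generating set of $H_1(S;\mathbb{Z}/2\mathbb{Z})$. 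So I will reduce the statement to checking the identity $\phi^+(\gamma)+1\equiv \mathrm{lk}(\gamma,\gamma^+) \bmod 2$ on an explicit collection of simple closed curves whose classes generate.

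For the generating set I take the braid word assemblage $\mathcal{C}_{F}$ together with $\mu$ and the chain $\mathcal{U}_g=\{c_1,\ldots,c_{2g-1}\}$, plus one additional curve $c_{2g}$ in $\widehat{\Sigma}_g$ completing the standard chain. Since $F$ deformation retracts onto $\bigcup\mathcal{C}_F$, the classes of $\mathcal{C}_F$ generate $H_1(F)$. The classes of $c_1,\ldots,c_{2g}$ generate $H_1(\Sigma_g)$. Finally $\mu$ is homologous to some $\gamma\in\mathcal{C}_F$ plus a meridian, so it is redundant homologically, but it is convenient because its $\phi^+$-value is prescribed. As a safer alternative, I can use $\mathcal{C}_F\cup\{c_1,\ldots,c_{2g-1}\}$ and one extra curve of $\widehat\Sigma_g$ (or $\mu$) to fill out the last homology class. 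All of these curves lie in $P$, where $\overline{\psi}=\phi^+ \bmod 2$.

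The curve-by-curve check goes as follows.

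For $\gamma\in\mathcal{C}_F$: $\gamma$ is a Hopf band curve, so $\mathrm{lk}(\gamma,\gamma^+)=\pm1$ and $\theta_S(\gamma)=1$. It is also admissible, with $\phi_\beta(\gamma)=\tfrac12(a_L-a_R)=0$. Since $\phi^+$ extends $\phi_\beta$ and $\gamma$ avoids $D_0$, we get $\phi^+(\gamma)+1=1$.

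For $c_k$: these are unknotted curves on a standardly embedded $\Sigma_g$, and their neighborhood annuli are untwisted, so $\theta_S(c_k)=0$. This matches $\phi^+(c_k)+1=2\equiv0$.

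For $\mu$: I set $\phi^+(\mu)=0$, so I must verify $\mathrm{lk}(\mu,\mu^+)$ is odd. I will compute this directly from Figure \ref{stabilizingseifertsurface}. Alternatively, since $\mu=\gamma+m$ in homology with $\langle\gamma,m\rangle=0$, the quadratic property of $\theta_S$ gives $\theta_S(\mu)=\theta_S(\gamma)+\theta_S(m)=1+0=1$.

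For the remaining curve $c_{2g}$: I compute $\phi^+(c_{2g})$ from homological coherence, using $\phi^+(\partial D_0)=-1$ (read from the framing of $F$ on the disk) and a pair of pants or torus subsurface. I expect it to be odd, matching $\theta_S(c_{2g})=0$.

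The main obstacle is this last curve, and more generally making sure the framing extension via Lemma \ref{extendframing} gives the expected parities on the curves of $\widehat{\Sigma}_g$ not in the chain. This is sensitive to orientation conventions: the boundary orientation of $\partial D_0,\partial D_1$ and the sign in homological coherence. I would first try to avoid the issue altogether by noting that $\mathcal{C}_F\cup\{\mu,c_1,\ldots,c_{2g-1}\}$ together with a curve disjoint from $D_1$ already generate $H_1(S;\mathbb{Z}/2\mathbb{Z})$. Failing that, I would run the explicit homological-coherence computation on the genus-one subsurface bounded by the curve parallel to $c_1\cup c_2$. Once agreement on the generators is established, Proposition \ref{QuadraticFormSpin} finishes the proof.
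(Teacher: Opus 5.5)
Your approach is the paper's. You reduce via Proposition \ref{QuadraticFormSpin} to agreement on a generating set, then check three families of curves:
\begin{itemize}
\item the curves of $\mathcal{C}_F$, which are Hopf band curves with $\phi^+=0$;
\item $\mu$, which has $\phi^+=0$ and odd self-linking;
\item $c_1,\ldots,c_{2g-1}$, which have $\phi^+=1$ and self-linking $0$.
\end{itemize}
What needs fixing is the bookkeeping of the generating set. $\mu$ is not redundant, and no extra curve $c_{2g}$ is needed.

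The set $\mathcal{C}_F\cup\{\mu\}\cup\mathcal{U}_g$ has $|\mathcal{C}_F|+2g=\operatorname{rank}H_1(S)$ elements. It also spans. The class $m=\mu-\gamma$ is a meridian in $\widehat{\Sigma}_g$ that meets $c_1$ once and misses $c_2,\ldots,c_{2g-1}$. So $\{m,c_1,\ldots,c_{2g-1}\}$ is a chain of $2g$ classes. Its mod $2$ intersection matrix is the even-size tridiagonal path matrix, which is invertible, so these classes are a basis of $H_1(\Sigma_g;\mathbb{Z}/2\mathbb{Z})$. This is exactly the basis the paper uses.

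Consequently the ``main obstacle'' you identify, the parity of $\phi^+(c_{2g})$, never has to be faced. As written, your primary route leaves that parity as an expectation rather than a verified step. Your ``safer alternative'' that uses $\mu$ to fill out the last class is already a complete proof, and you should commit to it.

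Your derivation $\theta_S(\mu)=\theta_S(\gamma)+\theta_S(m)=1+0$ from the quadratic property, using $\langle\gamma,m\rangle=0$, is a clean substitute for the paper's appeal to $\mu$ being a Hopf band curve read off from the figure.
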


\begin{proof}
By Proposition \ref{QuadraticFormSpin}, it suffices to prove that the quadratic forms $\overline{\psi}+1$ and $\theta_S$ agree on a generating set for $H_1(S;\mathbb{Z}/2\mathbb{Z})$. The collection $\mathcal{C}_{F} \cup \{\mu\} \cup \mathcal{U}_g$ is a basis for $H_1(S;\mathbb{Z}/2\mathbb{Z})$. Each curve in $\mathcal{C}_{F}$ is a Hopf band curve and has $\phi^+$-framing zero. The curve $\mu$ is a Hopf band curve on the Seifert surface $S$ and has $\phi^+$-framing zero. Each curve in $\mathcal{U}_g$ has self-linking number zero and $\phi^+$-framing one.   
\end{proof}

\textbf{Extending admissible twists rel boundary.} Let $S' \subset B^4$ be obtained by pushing the interior of $S$ into the interior of $B^4$. For lifting diffeomorphisms along subsurfaces and extending by the identity, an explicit model is convenient. Let $S^3 \times [0,1]$ be a collar about $\partial B^4 = S^3 \times \{0\}$, and set $$S' = \partial F \times [0,1/4] \cup \widehat{F} \times \{1/4\} \cup \partial D_0\times [1/4,1/2] \cup \widehat{\Sigma}_g\times \{1/2\}.$$ We suppress identification between $S$ and $S'$ and henceforth pull back all data on $S$ to the surface $S'$. A subsurface $N$ of $S$ pulled back to $S'$ will be denoted $N'$, the same rule applying to curves and configurations thereof. Recall that the linking form on $H_1(S;\mathbb{Z}/2\mathbb{Z})$ pulls back to Rochlin's form $q_{S'}: H_1(S';\mathbb{Z}/2\mathbb{Z}) \to \mathbb{Z}/2\mathbb{Z}$ (Proposition \ref{pullbackRochlin}). We now exploit the unknotted piece of the surface $S'$, in conjunction with the inductive framework behind the theory of framed mapping class groups,  to manufacture more extendable mapping classes in $\Mod(S')$.

\begin{lemma} \label{twistson1curves}
Fix $g \geq 5$ and a homogeneous braid word $\beta$. Let $\gamma \subset S_{\beta,g}'$ be a nonseparating simple closed curve  with $q_{S_{\beta,g}'}(\gamma) = 1$. Then  \[T_{\gamma} \in \mathcal{E}_{\partial}(B^4,S_{\beta,g}').\]    
\end{lemma}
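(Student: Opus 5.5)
The plan is to realize a large subgroup $G \le \mathcal{E}_\partial(B^4,S')$ that is a framed mapping class group of the subsurface $P' \subset S'$, and then to move $\gamma$ by an element of $G$ onto a standard curve whose twist is already known to extend.

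\textbf{Step 1: the unknotted piece.} The subsurface $\widehat{\Sigma}_g' \subset S^3\times\{1/2\}$ is an unknotted genus $g$ surface with one boundary component $\partial D_0$. After conjugating by the isotopy that pushes it deeper, it is the surface $\Sigma_g^1$ of Proposition \ref{relboundaryhirose}. So every element of $\Mod(\widehat{\Sigma}_g')[q]$ extends rel $\partial D_0$, and extending by the identity we get $\Mod(\widehat{\Sigma}_g')[q_{S'}] \le \mathcal{E}_\partial(B^4,S')$. Restricted to $\widehat{\Sigma}_g - D_1 \subset P$, the framing $\phi^+$ reduces mod $2$ to $q_{S'}+1$ by Proposition \ref{framingmod2}. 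Hence every $\phi^+$-admissible curve in this subsurface is $q$-admissible, and its twist extends. By Theorem \ref{AdmissibleTwists} (genus $g\ge 5$), the whole framed group $\Mod(\widehat{\Sigma}_g-\mathrm{int}(D_1))[\phi^+]$ lies in $\mathcal{E}_\partial(B^4,S')$.

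\textbf{Step 2: stabilizing across $F$.} The twists on curves in $\mathcal{C}_F$ and on $\mu$ are twists on Hopf band curves in $S$, so by the Hopf band Lemma they lie in $\mathcal{E}_\partial(B^4,S')$. These curves have $\phi^+$-framing zero, so they are admissible. Starting from the genus $\ge 5$ subsurface of Step 1, I add $\mu$ first; it meets the subsurface in a single arc. I then add the curves of $\mathcal{C}_F$ one at a time in an order in which each new curve meets the previously built subsurface in a single arc, which is possible because $F$ is a plumbing of Hopf bands. Applying Lemma \ref{stabilization} at each stage gives
\[\Mod(P')[\phi^+] \le \mathcal{E}_\partial(B^4,S').\]
If some added curve meets the current subsurface in more than one arc, I would instead use a chain ordering from the plumbing structure. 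This bookkeeping is routine but must be checked.

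\textbf{Step 3: change of coordinates.} Let $\gamma$ satisfy $q(\gamma)=1$, so $\overline{\psi}(\gamma)=0$. Isotope $\gamma$ off $D_1$ so that $\gamma \subset P'$. Pushing $\gamma$ across $D_1$ changes $\phi^+(\gamma)$ by $\pm 2g$, so $\phi^+(\gamma)$ can be adjusted within its class mod $2g$. I want $\phi^+(\gamma)\equiv 0$ or $\equiv$ the value of some known curve. Since only the mod 2 value is controlled, I choose a target curve $c\subset \widehat{\Sigma}_g'-D_1$ with $\phi^+(c)=\phi^+(\gamma)$ and $c$ nonseparating in $P$. Such a curve exists because winding numbers of nonseparating curves in a framed subsurface of genus $\ge 2$ realize every integer with the right parity, by twist-linearity applied to the chain $\mathcal{U}_g$. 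Here $q(c)=1$, so $T_c\in\mathcal{E}_\partial$ by Step 1.

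There is one subtlety: $\gamma$ and $c$ must both be nonseparating in $P$, and must not jointly separate $\partial D_1$ from $\partial F$ in a way that obstructs the framed change-of-coordinates. That lemma handles this for nonseparating curves with equal winding number. The framed change-of-coordinates lemma then gives $f\in\Mod(P')[\phi^+]\le \mathcal{E}_\partial$ with $f(c)=\gamma$, so $T_\gamma=fT_cf^{-1}\in\mathcal{E}_\partial(B^4,S')$ after capping $D_1$. The main obstacle is Step 2: verifying the single-arc hypotheses of Lemma \ref{stabilization} across the plumbing of $F$, and checking that Hopf band twists really remain extendable inside $S'$ with the two-level collar model.
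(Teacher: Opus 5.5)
Your proposal is correct and is essentially the paper's argument. Theorem \ref{AdmissibleTwists} and Proposition \ref{relboundaryhirose} put the framed group of the unknotted piece $\widehat{\Sigma}_g'-\mathrm{int}(D_1')$ into $\mathcal{E}_\partial(B^4,S')$. Lemma \ref{stabilization}, applied along the admissible Hopf band curves, upgrades this to the image of $\Mod(P')[\phi^+]$. Framed change of coordinates then conjugates $T_\gamma$ to a twist on an even-winding-number nonseparating curve in the unknotted piece.

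The only bookkeeping difference is in the stabilization step. The paper stabilizes along $\{\mu\}\cup\mathcal{C}_F$ with the curve of $\mathcal{C}_F$ that $\mu$ replaces removed, because $P$ is obtained from the unknotted piece by exactly $|\mathcal{C}_F|$ one-handles. In your ordering, one of your $|\mathcal{C}_F|+1$ curves will therefore already lie in the subsurface built so far, and you can simply skip it.
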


\begin{proof}
Recall that we suppress subscripts indicating dependency on $\beta$ and $g$. By an isotopy if necessary, we may assume that $\gamma \subset P'$. By Propositions \ref{framingmod2} and \ref{pullbackRochlin}, \[\phi^+  = q_{S'}+1 \mod 2\] on $P'$. After orienting $\gamma$, $\phi^+(\gamma) = 2k$ for some $k \in \mathbb{Z}$. Let $p_{D_1'} : \Mod(P') \to \Mod(S')$ be the homomorphism induced by extension by identity over the closed 2-disk $D_1'$. We claim that \[p_{D_1'}(\Mod(P')[\phi^+]) \leq \mathcal{E}_{\partial}(B^4,S').\] Let $V' = P' \cap \widehat{\Sigma}_g \times \{1/2\} \subset S^3 \times \{1/2\}$. The surface $V'$ is our genus $g$ unknot with two standard boundary components. The surface $P'$ is obtained by stabilizing $V'$ along the $\phi^+$-admissible configuration $\mathcal{Z}' = \{\mu'\} \cup \mathcal{C}_{F}'\backslash\{\gamma'\}$. As the genus of $V'$ is at least 5, it follows from Lemma \ref{stabilization} that \[\Mod(P')[\phi^+] = \langle \Mod(V')[\phi^+], \,T_c \,:\, c \in \mathcal{Z}'\rangle.\] Each curve $c \in \mathcal{Z}'$ is (after an isotopy of $S'$ rel $\partial S'$) a Hopf band curve in an $S^3$ slice of the collar and so $T_c \in \mathcal{E}_\partial(B^4,S')$. By Theorem \ref{AdmissibleTwists}, $\Mod(V')[\phi^+]$ is generated by Dehn twists on admissible curves. Fix an admissible curve $\zeta \subset V'$. By Proposition \ref{framingmod2}, it follows that \[q_{S'}(\zeta) = \mathrm{lk}(\zeta,\zeta^+) = 1 \mod 2,\]the linking number computed in $S^3 \times \{1/2\}$. By Proposition \ref{relboundaryhirose}, $T_{\zeta} \in \mathcal{E}_{\partial}(B^4,S')$.

Fix a nonseparating simple closed curve $\delta \subset \widehat{\Sigma}_g \times \{1/2\} \subset S'$ with $\phi^+(\delta)  = 2k$. By the framed change of coordinates principle (Proposition \ref{framedchange}), there exists $f \in \Mod(P')[\phi^+]$ such that $f(\gamma) = \delta$. Then \[T_{\delta} = fT_{\gamma}f^{-1},\] and by spin-flexibility of $\widehat{\Sigma}_g\times\{1/2\}$ rel boundary (Proposition \ref{relboundaryhirose}), we have $T_{\delta} \in \mathcal{E}_{\partial}(B^4,S')$ and  $T_{\gamma} \in \mathcal{E}_{\partial}(B^4,S')$. 
\end{proof}

We now construct (spin) flexible representatives in $\mathbb{C}P^2$ and $S^2 \times S^2$. 

\subsection{Surfaces in $\mathbb{C}P^2$ and $S^2 \times S^2$} \label{ConstructingSurfaces} Work with the standard handlebody descriptions for the manifolds $\mathbb{C}P^2$ and $S^2 \times S^2$ - after removing a 4-ball in each, $\mathbb{C}P^2$ is obtained by attaching a 2-handle along a $+1$-framed unknot in $\partial B^4$, and $S^2\times S^2$ by attaching a pair of 2-handles along a $0$-framed Hopf link in $\partial B^4$.

Let $\mathbb{C}P^1 \subset \mathbb{C}P^2$ be a line. Recall that a representative for the class $d[\mathbb{C}P^1] \in H_2(\mathbb{C}P^2;\mathbb{Z})$ may be constructed as follows (see \cite{GS} for a thorough treatment). Let $B^4 \subset \mathbb{C}P^2$ be the 0-handle. In $\partial B^4$ take $d$ parallel copies of the attaching circle for the 2-handle, an oriented $(d,d)$ torus link, which we represent as the closure of the  braid $\beta_d = (\sigma_1\cdots\sigma_{d-1})^d \in B_d$. We cap off the pushed in Seifert surface $S_{\beta_d,g}'$ for $\beta_d$ constructed in Subsection \ref{SPFDefns} with $d$ disjoint 2-disks, which are parallel copies of the core of the 2-handle. This yields a closed oriented surface $Z_d = Z_{d,g} \subset \mathbb{C}P^2$ with $[Z_d] = d[\mathbb{C}P^1] \in H_2(\mathbb{C}P^2;\mathbb{Z})$ and genus $g(Z_d) = \frac{1}{2}(|d|-1)(|d|-2)+g.$ \newline

Fix $x \in S^2$. Consider \[(n,m) = n\left [\{x\}\times S^2 \right ]+ m\left [S^2 \times \{x\}\right]\in H_2(S^2 \times S^2;\mathbb{Z}),\] the class of $n$ vertical and $m$ horizontal spheres. We call a class $(n,m)$ \textit{positive} if both coefficients are strictly positive, and \textit{negative} if one of $n$ or $m$ is strictly negative. Note that, if either of $n$ or $m$ is zero, the class admits a genus zero representative, obtained by tubing together a disjoint collection of 2-spheres. 

We start by constructing representatives for positive classes. Fix $n,m >0$. In the boundary of the 0-handle, take $n$ parallel copies of one (vertical) attaching circle and $m$ parallel copies of the other (horizontal), the closure of a positive braid  $\beta_{n,m}$ on $n+m$ strands. Let $Z_{n,m,g} = Z_{n,m} \subset S^2 \times S^2$ be the closed surface obtained by capping off $S_{\beta_{n,m},g}'$ (Subsection \ref{SPFDefns}) with parallel copies of the cores of the 2-handles. The surface $S_{\beta_{n,m},g}'$ has genus $(n-1)(m-1)+g$ and $[Z_{n,m}] = (n,m) \in H_2(S^2 \times S^2;\mathbb{Z})$. For the negative classes, we apply diffeomorphisms of $S^2 \times S^2$ that reverse orientation in the relevant factors of $S^2 \times S^2$ to the surface $Z_{n,m}$. Our convention shall be to drop the subscript $g$. 

\begin{rmk} \label{orbitsurface}
By construction, the surfaces representing negative classes in $H_2(S^2\times S^2;\mathbb{Z})$ are of the form $\varphi(Z_{n,m})$ for a diffeomorphism $\varphi : S^2 \times S^2 \to S^2 \times S^2$ and $n,m >0$. In particular, if $Z_{n,m}$ is (spin) flexible, so too is $\varphi(Z_{n,m})$. 
\end{rmk}

\textbf{Flexibility}. Recall that a link in $S^3$ is \textit{proper} if the linking number of any component  with the remaining components is even. In our examples, flexibility arises from the failure of the link in the boundary of the 0-handle to be proper (we refer the reader to Figure \ref{slide}). For a Seifert surface $S \subset S^3$, recall that $\theta_S : H_1(S;\mathbb{Z}/2\mathbb{Z}) \to \mathbb{Z}/2\mathbb{Z}$ denotes the mod two linking form on $S$.  

\begin{proposition} \label{geometricwitness}
Let $\Sigma \subset M^4$ be a closed surface in a 4-manifold $M$. Assume that there exists a 4-ball $B^4 \subset M$ for which $\Sigma \cap B^4 = \Sigma \cap \partial B^4 = S$, a Seifert surface for a non-proper link in $S^3$. Assume that $\Sigma \backslash \textrm{int}(S)$ is a collection of closed 2-disks. Then there is a component $\Delta$ of $\partial S$ such that isotoping a simple closed curve $\gamma \subset S$ across the disk that $\Delta$ bounds yields a simple closed curve $\widetilde{\gamma} \subset S$ for which \[\theta_{S}(\widetilde{\gamma}) = \theta_S(\gamma) +1 \mod 2. \]
\end{proposition}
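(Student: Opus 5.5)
Take $\Delta$ to be a component of $\partial S$ witnessing non-properness, i.e. with $\mathrm{lk}(\Delta, \partial S - \Delta)$ odd. Such a component exists by the definition of a non-proper link.

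We then study what happens to a curve $\gamma$ when it is pushed across the capping disk $D_\Delta\subset \Sigma\backslash \mathrm{int}(S)$. Choose a properly embedded arc on $S$ from a point of $\gamma$ to $\Delta$. The curve $\widetilde{\gamma}$ is obtained by banding $\gamma$ along this arc to a parallel copy of $\Delta$ pushed slightly into $S$. Thus $\widetilde\gamma$ is a band sum of $\gamma$ and $\Delta' \subset S$, where $\Delta'$ is the boundary-parallel copy of $\Delta$. On $\Sigma$ the curves $\gamma$ and $\widetilde\gamma$ are isotopic, since $\Delta'$ bounds a disk there. On $S$ they differ in $H_1(S;\mathbb{Z}/2\mathbb{Z})$ by $[\Delta']$.

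The computation is carried out with $\theta_S$ extended to all of $H_1(S;\mathbb{Z}/2\mathbb{Z})$, including boundary-parallel classes. The quadratic relation gives
\[\theta_S(\widetilde\gamma) = \theta_S(\gamma) + \theta_S(\Delta') + \langle \gamma, \Delta'\rangle \mod 2.\]
Alternatively, we compute directly with the integral Seifert form:
\[\mathrm{lk}(\widetilde\gamma,\widetilde\gamma^+) = \mathrm{lk}(\gamma,\gamma^+) + \mathrm{lk}(\gamma,\Delta'^+) + \mathrm{lk}(\Delta',\gamma^+) + \mathrm{lk}(\Delta',\Delta'^+).\]
The cross terms satisfy $\mathrm{lk}(\gamma,\Delta'^+) - \mathrm{lk}(\Delta',\gamma^+) = \langle\gamma,\Delta'\rangle = 0$, because $\Delta'$ is boundary parallel and has zero intersection with everything. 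Hence the cross terms contribute $2\,\mathrm{lk}(\gamma,\Delta'^+) \equiv 0 \bmod 2$. To make the band sum legitimate, we choose the band so that it contributes no extra linking; it is a thin band in $S$, so its contribution is already accounted for by the push-offs in $S$.

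It therefore remains to show that $\mathrm{lk}(\Delta',\Delta'^+)$ is odd. Since $\Delta'$ is parallel to $\Delta$ in $S$, this is the framing of $\Delta$ induced by $S$, i.e. the Seifert framing of the component $\Delta$ as a component of the link $\partial S$. Pushing $\Delta$ off along $S$ gives a curve whose linking with $\Delta$ equals minus the linking of $\Delta$ with the other boundary components. This holds because $\Delta^+ \cup (\text{other components})$ is null-homologous in the complement of $\Delta$ via the pushed-off copy of $S$, so $\mathrm{lk}(\Delta,\Delta^+) + \mathrm{lk}(\Delta,\partial S-\Delta)=0$. By the choice of $\Delta$ this quantity is odd, which yields $\theta_S(\widetilde\gamma) = \theta_S(\gamma)+1$.

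The main obstacle is bookkeeping the band-sum linking computation rigorously: orientations, and the fact that $\theta_S$ of a boundary-parallel class is computed by the $S$-framing of $\Delta$. I would handle this by working integrally with the Seifert form on $H_1(S;\mathbb{Z})$ and only reducing mod 2 at the end. In the integral setting, $\widetilde\gamma = \gamma \pm \Delta'$ homologically, and the Seifert form is bilinear, so only the framing identity for $\Delta$ requires geometric input.
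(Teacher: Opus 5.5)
Your proposal is correct and follows essentially the same route as the paper: you choose $\Delta$ with $\mathrm{lk}(\Delta,\partial S-\Delta)$ odd and deduce that $\theta_S(\Delta)$ is odd from the vanishing of the linking of $\partial S$ with the push-off $\Delta^+$ (your pushed-off-surface phrasing is the same identity). You then use that $\gamma$, $\widetilde{\gamma}$, $\Delta$ cobound a pair of pants, together with quadraticity of $\theta_S$ and $\langle \gamma,\Delta\rangle = 0$; your integral Seifert-form computation is just an expanded form of that quadratic relation.
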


\begin{proof}
By assumption, $\partial S$ is not proper. Let $\Delta$ denote a component of $\partial S$ whose linking number with the remaining components $\partial S -\Delta$ is odd. Then 
\[ 0 = \mathrm{lk}(\partial S,\Delta^+) = \mathrm{lk}(\partial S-\Delta,\Delta^+) + \mathrm{lk}(\Delta,\Delta^+) = \mathrm{lk}(\partial S-\Delta,\Delta) + \mathrm{lk}(\Delta,\Delta^+).\]
Therefore $\theta_S(\Delta) = \mathrm{lk}(\Delta,\Delta^+)$ is odd. By construction, the curves $\gamma \cup \widetilde{\gamma} \cup \Delta$ bound a pair of pants on $S$ and so 
\[ \theta_S(\gamma) + \theta_S(\widetilde{\gamma}) + \theta_S(\Delta) = 0 \mod 2.\]
Therefore $\theta_{S}(\widetilde{\gamma}) = \theta_S(\gamma) +1 \mod 2.$
\end{proof}

\begin{figure}
  \centering
  \def\svgwidth{0.55\columnwidth}
  %% Creator: Inkscape 1.4 (e7c3feb1, 2024-10-09), www.inkscape.org
%% PDF/EPS/PS + LaTeX output extension by Johan Engelen, 2010
%% Accompanies image file 'SlideOverDisk.pdf' (pdf, eps, ps)
%%
%% To include the image in your LaTeX document, write
%%   \input{<filename>.pdf_tex}
%%  instead of
%%   \includegraphics{<filename>.pdf}
%% To scale the image, write
%%   \def\svgwidth{<desired width>}
%%   \input{<filename>.pdf_tex}
%%  instead of
%%   \includegraphics[width=<desired width>]{<filename>.pdf}
%%
%% Images with a different path to the parent latex file can
%% be accessed with the `import' package (which may need to be
%% installed) using
%%   \usepackage{import}
%% in the preamble, and then including the image with
%%   \import{<path to file>}{<filename>.pdf_tex}
%% Alternatively, one can specify
%%   \graphicspath{{<path to file>/}}
%% 
%% For more information, please see info/svg-inkscape on CTAN:
%%   http://tug.ctan.org/tex-archive/info/svg-inkscape
%%
\begingroup%
  \makeatletter%
  \providecommand\color[2][]{%
    \errmessage{(Inkscape) Color is used for the text in Inkscape, but the package 'color.sty' is not loaded}%
    \renewcommand\color[2][]{}%
  }%
  \providecommand\transparent[1]{%
    \errmessage{(Inkscape) Transparency is used (non-zero) for the text in Inkscape, but the package 'transparent.sty' is not loaded}%
    \renewcommand\transparent[1]{}%
  }%
  \providecommand\rotatebox[2]{#2}%
  \newcommand*\fsize{\dimexpr\f@size pt\relax}%
  \newcommand*\lineheight[1]{\fontsize{\fsize}{#1\fsize}\selectfont}%
  \ifx\svgwidth\undefined%
    \setlength{\unitlength}{197.89259783bp}%
    \ifx\svgscale\undefined%
      \relax%
    \else%
      \setlength{\unitlength}{\unitlength * \real{\svgscale}}%
    \fi%
  \else%
    \setlength{\unitlength}{\svgwidth}%
  \fi%
  \global\let\svgwidth\undefined%
  \global\let\svgscale\undefined%
  \makeatother%
  \begin{picture}(1,0.81902259)%
    \lineheight{1}%
    \setlength\tabcolsep{0pt}%
    \put(0,0){\includegraphics[width=\unitlength,page=1]{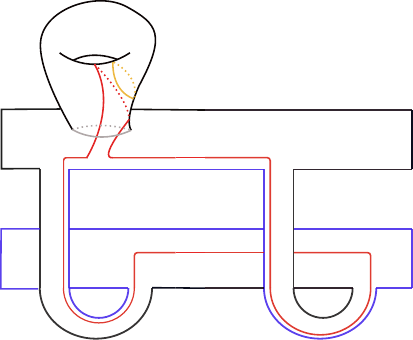}}%
    \put(0.28476511,0.65071971){\color[rgb]{0.01568627,0.01176471,0.00392157}\transparent{0.85097998}\makebox(0,0)[lt]{\lineheight{1.25}\smash{\begin{tabular}[t]{l}$\gamma$ \end{tabular}}}}%
    \put(0.32669918,0.45739489){\color[rgb]{0.01568627,0.01176471,0.00392157}\transparent{0.85097998}\makebox(0,0)[lt]{\lineheight{1.25}\smash{\begin{tabular}[t]{l}$\widetilde{\gamma}$\end{tabular}}}}%
    \put(0.1871581,0.36016168){\color[rgb]{0.01568627,0.01176471,0.00392157}\transparent{0.85097998}\makebox(0,0)[lt]{\lineheight{1.25}\smash{\begin{tabular}[t]{l}$\Delta$\end{tabular}}}}%
  \end{picture}%
\endgroup%

  \caption{\textnormal{Consider $S_{\beta_2,1}\subset S^3$, the connect sum of a Hopf band and unknotted torus, a subsurface of $Z_2 \subset \mathbb{C}P^2$. By construction, the boundary components of the Hopf band bound disks in the 2-handle. Sliding  $\gamma$ across one of these disks yields a curve $\widetilde{\gamma}$ on $S_{\beta_2,1}$, where $\mathrm{lk}(\gamma,\gamma^+)  = 0$ and $\mathrm{lk}(\widetilde{\gamma},\widetilde{\gamma}^+) = 1 \mod 2$ (in fact $\widetilde{\gamma}$ is a Hopf band curve).  Sliding a standard pair of dual curves on the torus across disks in the 2-handle yields a pair of dual Hopf band curves. Applying Proposition \ref{HiroseHopfBand} yields flexibility of $Z_2$}. }
  \label{slide}
\end{figure}

We are now in a position to prove

\setcounter{maintheorem}{0}
\begin{maintheorem} \label{thmA}
Let $M = \mathbb{C}P^2$ or $S^2 \times S^2$. Fix $x \in H_2(M;\mathbb{Z})$. If $x$ is characteristic, then $x$ admits a spin-flexible representative, otherwise $x$ admits a flexible representative. 
\end{maintheorem}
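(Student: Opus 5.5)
We will use the representatives of Subsection \ref{ConstructingSurfaces} with $g \geq 5$, namely $Z_{d,g}\subset \mathbb{C}P^2$ and $Z_{n,m,g}\subset S^2\times S^2$ for $n,m>0$, together with their images under the orientation-reversing-in-a-factor diffeomorphisms of Remark \ref{orbitsurface}. We first treat the degenerate classes. In $\mathbb{C}P^2$ with $d=0$, and in $S^2\times S^2$ with $n=0$ or $m=0$, the class is represented by an unknotted surface or a genus zero sphere. A sphere is trivially flexible. For an unknotted surface of positive genus we can instead take $\Sigma_g\subset S^4\subset M$ and stabilize by the construction below. Alternatively, for $d=0$ in $\mathbb{C}P^2$ we cite \cite[Theorem 3.1]{HiroseCP^2}. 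For $(0,m)$, and for $d=\pm1$, the representatives are spheres, and for these we take the construction with $g=0$. For the remaining classes we reduce, via Remark \ref{orbitsurface} and complex conjugation on $\mathbb{C}P^2$ (which handles $d<0$), to $Z=Z_{d,g}$ with $d\geq 2$ or $Z=Z_{n,m,g}$ with $n,m\geq1$. In each case $Z=S'\cup D$, where $S'=S'_{\beta,g}$ is the pushed-in stabilized Seifert surface of the positive, hence homogeneous, braid $\beta$ and $D$ is a union of disks in the 2-handles.

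Since $Z$ has genus at least $g\geq 5\geq 3$, $\Mod(Z)$ is generated by Dehn twists on nonseparating simple closed curves. Every such curve can be isotoped on $Z$ off the disks $D$ into $S'$, and it remains nonseparating there after a suitable choice. An element of $\mathcal{E}_\partial(B^4,S')$ is realized by a diffeomorphism of $B^4$ fixing $\partial B^4$. Extending it by the identity over the 2-handles, which preserves $D$, shows that the image of $\mathcal{E}_\partial(B^4,S')$ in $\Mod(Z)$ lies in $\mathcal{E}(M,Z)$. Proposition \ref{Rochlinislink} and Proposition \ref{pullbackRochlin} then identify the forms on a curve $\gamma\subset S'$ through
\[ q_{Z}(\gamma)=\theta_S(\gamma)=q_{S'}(\gamma)\]
whenever $Z$ is characteristic.

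\emph{Characteristic case.} Here $d$ is odd, or $n$ and $m$ are both even. Let $\gamma\subset Z$ be nonseparating with $q_Z(\gamma)=1$, and isotope it into $S'$. Lemma \ref{twistson1curves} gives $T_\gamma\in\mathcal{E}_\partial(B^4,S')$, so $T_\gamma\in\mathcal{E}(M,Z)$. Theorem \ref{admissiblespin}, applied to the $\mathbb{Z}/2$ winding number function $q_Z+1$ of Proposition \ref{QuadraticFormSpin} using Remark \ref{admissibleunderform}, shows that these twists generate $\Mod(Z)[q_Z]$. Proposition \ref{RochlinsForm} gives the reverse inclusion, so $Z$ is spin-flexible.

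\emph{Non-characteristic case.} Here $d$ is even, or at least one of $n,m$ is odd. We check that $\partial S$ is a non-proper link. For $\mathbb{C}P^2$ the link is the $(d,d)$ torus link, and each component links the others $d-1$ times, which is odd. For $S^2\times S^2$ the link is $n$ parallel vertical circles and $m$ parallel horizontal circles forming a 0-framed Hopf-type configuration. A vertical component links the rest $m$ times, since parallel copies of a 0-framed circle are unlinked, and a horizontal component links the rest $n$ times. So if $n$ or $m$ is odd, some component has odd total linking. Proposition \ref{geometricwitness} then gives a disk of $D$ across which sliding changes $\theta_S$ by $1$. Take a nonseparating $\delta\subset S'$ with $\theta_S(\delta)=0$ and slide it across that disk to get $\widetilde\delta$. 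Then $\widetilde\delta$ is isotopic to $\delta$ on $Z$, is still nonseparating in $S'$, and has $q_{S'}(\widetilde\delta)=\theta_S(\widetilde\delta)=1$. Lemma \ref{twistson1curves} puts $T_{\widetilde\delta}=T_\delta$ in $\mathcal{E}(M,Z)$. Combined with the odd curves, which are handled directly by the same lemma, every nonseparating twist extends, so $Z$ is flexible.

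The step I expect to be the main obstacle is the bookkeeping when passing between $Z$ and $S'$. Curves must be realized inside $S'$ so that they are nonseparating there; if a curve only separates $S'$ by cutting off boundary components, a different disk-avoiding representative should be chosen. One must also check that a slide across a disk yields a simple closed curve on $S$, which follows from the pair-of-pants picture.
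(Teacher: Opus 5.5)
Your proposal is correct and follows the paper's proof essentially step for step. You reduce to $Z_{d,g}$ and $Z_{n,m,g}$ with $g\geq 5$, handle the characteristic case with Theorem \ref{admissiblespin} and Lemma \ref{twistson1curves}, and in the non-characteristic case use non-properness of $\partial S$ and Proposition \ref{geometricwitness} to slide even curves to odd ones.

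Two small remarks. The bookkeeping obstacle you anticipate does not arise: a curve $\gamma$ that is nonseparating in $Z$ is automatically nonseparating in $S' = Z\setminus \mathrm{int}(D)$, since deleting disjoint open disks from the connected surface $Z\setminus\gamma$ leaves it connected. Also, the sphere representative for $(0,m)$ should come from tubing together parallel spheres; the $g=0$ version of the construction would instead give $m$ disjoint spheres.
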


\begin{proof}
If $x$ is the trivial class, we may use a genus zero representative. If $M = S^2 \times S^2$, it suffices to work with positive classes (Remark \ref{orbitsurface}). Given $x \in H_2(M;\mathbb{Z})$, let $Z_x \subset M^4$ be a representative for $x$ constructed in Section \ref{ConstructingSurfaces} with $Z_x \cap B^4 = S_{\beta_x,g}'$ and $g \geq 5$. We start by assuming  that $x$ is characteristic. Then $\mathcal{E}(M^4,Z_x) \leq \Mod(Z_x)[q_{Z_x}]$ where $q_{Z_x} : H_1(Z_x;\mathbb{Z}/2\mathbb{Z}) \to \mathbb{Z}/2\mathbb{Z}$ is Rochlin's quadratic form. Recall that a simple closed curve $\gamma \subset Z_x$ is admissible if it is nonseparating and $q_{Z_x}(\gamma)  = 1$ (Remark \ref{admissibleunderform}). By Theorem \ref{admissiblespin} and Proposition \ref{QuadraticFormSpin}, \[\Mod(Z_x)[q_{Z_x}] = \langle T_\gamma \,|\, \gamma \subset Z_x \textrm{ admissible} \rangle.\] Fix an admissible curve $\gamma \subset Z_x$. By an isotopy of $\gamma$ if necessary, we may assume that $\gamma \subset S_{\beta_x,g}'$. Then \[ q_{S_{\beta_x,g}'}(\gamma) = q_{Z_x}(\gamma) = 1.\] By Lemma \ref{twistson1curves},  $T_{\gamma} \in \mathcal{E}_{\partial}(B^4,S_{\beta_x,g}')$. Therefore $T_{\gamma} \in \mathcal{E}(M,Z_x)$ and  $\mathcal{E}(M,Z_x) =  \Mod(Z_x)[q_{Z_x}]$.

Assume now that $x$ is not a characteristic element. To prove that $\mathcal{E}(M,Z_x) = \Mod(Z_x)$, it suffices, in light of Lemma \ref{twistson1curves}, to prove that if $\gamma \subset S_{\beta_x,g}'$ is a nonseparating simple closed curve with $q_{S_{\beta_x,g}'}(\gamma) = 0$, then $T_{\gamma} \in \mathcal{E}(M,Z_x)$. By inspection, the link $\partial S_{\beta_x,g}'$ is not proper. By Proposition \ref{geometricwitness}, there exists a boundary component $\Delta$ of $S_{\beta_x,g}'$ for which isotoping $\gamma$ across the disk that $\Delta$ bounds yields a curve $\widetilde{\gamma} \subset S_{\beta_x,g}'$ with \[q_{S_{\beta_x,g}'}(\widetilde{\gamma}) = 1.\] By Lemma \ref{twistson1curves}, $T_{\widetilde{\gamma}} \in \mathcal{E}_{\partial}(B^4,S_{\beta_x,g}')$. As $\gamma$ and $\widetilde{\gamma}$ are isotopic on $Z_x$, $T_{\gamma} = T_{\widetilde{\gamma}} \in \mathcal{E}(M,Z_x)$. As $\Mod(Z_x)$ is generated by Dehn twists on nonseparating simple closed curves, $Z_x$ is a flexible surface. 
\end{proof}

\section{A Single stabilization for smooth plane curves}

Let $X_d \subset \mathbb{C}P^2$ be a smooth plane curve of degree $d$. Recall that any two such curves are smoothly isotopic in $\mathbb{C}P^2$. Again, we consider the standard handle decomposition of $\mathbb{C}P^2$ with 0-handle $B^4 \subset \mathbb{C}P^2$. Following \cite{AK} and \cite{GS}, up to isotopy, we may assume $X_d \cap B^4 = X_d \cap \partial B^4 = F_{\beta_d}$ where $\beta_d = (\sigma_1\cdots \sigma_{d-1})^d$, and $F_{\beta_d}$ is the Seifert surface for the closure of $\beta_d$ constructed in Section 4. Furthermore, $X_d = F_{\beta_d} \cup D$ where $D$ is a disjoint collection of 2-disks (parallel copies of the core of the 2-handle).

Given this description of smooth plane curves, the methods used to establish Theorem \ref{thmA} prove that stabilizing $X_d$ with an unknot in $S^4$ of genus at least 5 yields a (spin) flexible surface. We now show that a single stabilization suffices, starting with a general argument for curves of degree $d \geq 5$ and a sequence of ad hoc arguments dealing with $d \leq 4$. We continue to use notation and conventions established in Subsection \ref{integralliftsection}. 

Let $Z_d = X_d \# T \subset \mathbb{C}P^2 \# S^4$ where $T \subset S^4$ is an unknotted torus. Up to isotopy, we may assume $Z_d \cap \partial B^4 = S_{\beta_d,1}$ where $S_{\beta_d,1} = F_{\beta_d} \# T$. Recall that we have equipped $F_{\beta_d}$ with a framing $\phi_{\beta_d} : S(F_{\beta_d}) \to \mathbb{Z}$, which induces a framing on $\widehat{F}_{\beta_d} = F_{\beta_d} - \textrm{int}(D_0)$, where $D_0$ is the surgery disk for the connect sum $F_{\beta_d} \# T$.

\begin{rmk}
In clarifying remarks made from the introduction, we note that there is a winding number function $\phi_d : \mathcal{S}(X_d) \to \mathbb{Z}/(d-3)$ arising from a $(d-3)^{rd}$ root of the adjoint line bundle of $X$ in $\mathbb{C}P^2$ (we refer the reader to \cite{SalterToric} for more details). The framing $\phi_{\beta_d}$ induces a $(d-3)$-spin structure on $X_d$ which \textit{coincides} with $\phi_d$. In this case, $\mathcal{C}_{F_{\beta_d}}$ consists of \textit{vanishing cycles} for nodal degenerations of the plane curve $X_d$. We do not need these facts and so we omit details. 
\end{rmk}

Recall that $P_{\beta_d,1} = S_{\beta_d,1} - \textrm{int}(D_1)$ where $D_1$ is a closed 2-disk supported in the torus $T$. We extend the framing $\phi_{\beta_d}$ to a framing $\phi^+_{\beta_d}$ of $P_{\beta_d,1}$ by declaring the curves $\delta_1,\delta_2 \subset P$ depicted in Figure \ref{DeltaCurvesThmB} admissible (Lemma \ref{extendframing}).  By construction, this extension is an integral lift of the linking form on the Seifert surface $S_{\beta_d,1}$.

\begin{figure}
  \centering
  \def\svgwidth{0.45\columnwidth}
  %% Creator: Inkscape 1.4 (e7c3feb1, 2024-10-09), www.inkscape.org
%% PDF/EPS/PS + LaTeX output extension by Johan Engelen, 2010
%% Accompanies image file 'PairofDeltaCurvesThmB.pdf.pdf' (pdf, eps, ps)
%%
%% To include the image in your LaTeX document, write
%%   \input{<filename>.pdf_tex}
%%  instead of
%%   \includegraphics{<filename>.pdf}
%% To scale the image, write
%%   \def\svgwidth{<desired width>}
%%   \input{<filename>.pdf_tex}
%%  instead of
%%   \includegraphics[width=<desired width>]{<filename>.pdf}
%%
%% Images with a different path to the parent latex file can
%% be accessed with the `import' package (which may need to be
%% installed) using
%%   \usepackage{import}
%% in the preamble, and then including the image with
%%   \import{<path to file>}{<filename>.pdf_tex}
%% Alternatively, one can specify
%%   \graphicspath{{<path to file>/}}
%% 
%% For more information, please see info/svg-inkscape on CTAN:
%%   http://tug.ctan.org/tex-archive/info/svg-inkscape
%%
\begingroup%
  \makeatletter%
  \providecommand\color[2][]{%
    \errmessage{(Inkscape) Color is used for the text in Inkscape, but the package 'color.sty' is not loaded}%
    \renewcommand\color[2][]{}%
  }%
  \providecommand\transparent[1]{%
    \errmessage{(Inkscape) Transparency is used (non-zero) for the text in Inkscape, but the package 'transparent.sty' is not loaded}%
    \renewcommand\transparent[1]{}%
  }%
  \providecommand\rotatebox[2]{#2}%
  \newcommand*\fsize{\dimexpr\f@size pt\relax}%
  \newcommand*\lineheight[1]{\fontsize{\fsize}{#1\fsize}\selectfont}%
  \ifx\svgwidth\undefined%
    \setlength{\unitlength}{179.51387661bp}%
    \ifx\svgscale\undefined%
      \relax%
    \else%
      \setlength{\unitlength}{\unitlength * \real{\svgscale}}%
    \fi%
  \else%
    \setlength{\unitlength}{\svgwidth}%
  \fi%
  \global\let\svgwidth\undefined%
  \global\let\svgscale\undefined%
  \makeatother%
  \begin{picture}(1,0.90288629)%
    \lineheight{1}%
    \setlength\tabcolsep{0pt}%
    \put(0,0){\includegraphics[width=\unitlength,page=1]{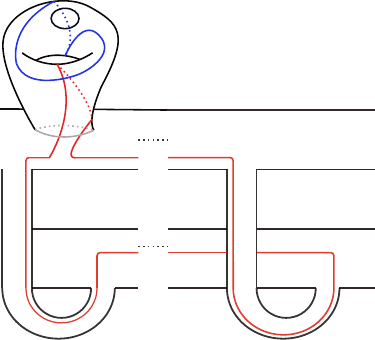}}%
    \put(0.04174416,0.65780825){\color[rgb]{0.03921569,0.04705882,0.10980392}\makebox(0,0)[lt]{\lineheight{1.25}\smash{\begin{tabular}[t]{l}$\delta_2$\end{tabular}}}}%
    \put(0.2323051,0.50399432){\color[rgb]{0.03921569,0.04705882,0.10980392}\makebox(0,0)[lt]{\lineheight{1.25}\smash{\begin{tabular}[t]{l}$\delta_1$\end{tabular}}}}%
  \end{picture}%
\endgroup%

  \caption{\textnormal{The framing $\phi_{\beta_d}$ on $\widehat{F}_{\beta_d}$ is extended over the surface $P_{\beta_d,1} = S_{\beta_d,1}-\textrm{int}(D_1)$ by declaring the curves $\delta_1,\delta_2$ admissible (winding number zero). Both $\delta_1,\delta_2$ are Hopf band curves.  }}
  \label{DeltaCurvesThmB}
\end{figure}

In Section 4, we relied on a sufficiently large unknotted piece of the surface to initiate an inductive framework (Lemma \ref{stabilization}). This propagated into a lower bound on the extendable subgroup. We now rely on the \textit{geometric monodromy group} of the braid $\beta_d$, using work of Ferretti in \cite{Ferretti}. Recall, from Section 3, that on the Seifert surface $F_{\beta_d} \subset S^3$ there is the \textit{braid word assemblage} $\mathcal{C}_{F_{\beta_d}}$ (a filling configuration of Hopf band curves). The geometric monodromy group of the braid $\beta_d$ is \[\mathcal{T}_{\mathcal{C}_{F_{\beta_d}}} = \langle T_c \,:\, c\in \mathcal{C}_{F_\beta} \rangle.\] 

\begin{lemma} \label{genframedmcg}
For $d \geq 5$, $\Mod(F_{\beta_d})[\phi_{\beta_d}] = \langle T_c \,:\, c\in \mathcal{C}_{F_\beta} \rangle$. 
\end{lemma}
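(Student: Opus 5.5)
The inclusion $\langle T_c : c\in\mathcal{C}_{F_{\beta_d}}\rangle \leq \Mod(F_{\beta_d})[\phi_{\beta_d}]$ is already in hand. Each $c\in\mathcal{C}_{F_{\beta_d}}$ is admissible, since $\phi_{\beta_d}(c)=\frac12(a_L^c-a_R^c)=0$, and twist-linearity (Proposition \ref{twistlinear}) then puts $T_c$ in the stabilizer. So the real content is the reverse inclusion.

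For the reverse inclusion I would invoke \cite[Theorem 2]{Ferretti} directly, after checking its hypotheses for $\beta_d=(\sigma_1\cdots\sigma_{d-1})^d$. The braid is positive, hence homogeneous. It is prime, because the Stallings surface $F_{\beta_d}$ is a connected plumbing whose Hopf-band intersection graph is the $(d-1)\times(d-1)$ grid-type graph, and this graph does not split as a Murasugi sum. It is not of type $A_n$ once $d\ge 3$, since the graph contains cycles. For $d\ge 5$ the surface has genus $\frac12(d-1)(d-2)\ge 6$, which should place us outside Ferretti's finite list of exceptional small braids; $d\le 4$ is exactly where the exceptions live, which explains the hypothesis $d\ge5$. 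The one mismatch is that Ferretti's theorem is phrased for closures that are knots, whereas the closure of $\beta_d$ is the $(d,d)$ torus link with $d$ components.

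If the link case is not covered, I would prove the equality by induction along the plumbing, in the style of Section 4. Write $\mathcal{C}=\mathcal{C}_{F_{\beta_d}}$ and exhaust $F_{\beta_d}$ by subsurfaces $S_0\subset S_1\subset\cdots\subset S_N=F_{\beta_d}$. Each $S_{i+1}$ is a regular neighbourhood of $S_i\cup a_{i+1}$, with $a_{i+1}\in\mathcal{C}$ meeting $S_i$ in a single arc. This is possible because $F_{\beta_d}$ deformation retracts onto $\bigcup\mathcal{C}$ and the intersection graph of $\mathcal{C}$ is connected. For the base surface $S_0$ I would take the surface $F_{\beta_5}$ (or $F_{\beta_{d'}}$ of a knot-type sub-braid) sitting as a subsurface of $F_{\beta_d}$ via the first few strands and bands. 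It has genus at least $5$, and for it the equality $\mathcal{T}_{\mathcal{C}\cap S_0}=\Mod(S_0)[\phi]$ is known: either from \cite{Ferretti} when its boundary is a knot, or from the Calderon--Salter identification of the monodromy of $X_5$ together with Theorem \ref{AdmissibleTwists}. Lemma \ref{stabilization} then gives $\Mod(S_{i+1})[\phi]=\langle \Mod(S_i)[\phi],T_{a_{i+1}}\rangle$, because $a_{i+1}$ is admissible. Inducting, $\Mod(F_{\beta_d})[\phi_{\beta_d}]$ is generated by twists in $\mathcal{C}$.

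The step I expect to be the main obstacle is the base case, together with the claim that the arc condition holds at each step of the ordering. In the grid, adding a curve can close up a cycle, so the new curve may meet $S_i$ in two arcs rather than one. To handle this I would order the curves row by row, so that each new curve meets the previous ones in a connected arc, and I would verify this on the explicit Stallings picture (Figure \ref{FramedSurface}).
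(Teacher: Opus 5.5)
Your fallback argument is, in outline, the paper's proof. You take a subsurface of $F_{\beta_d}$ bounded by a \emph{knot}, so that \cite[Theorem 2]{Ferretti} applies, and then climb to $F_{\beta_d}$ one admissible curve of $\mathcal{C}_{F_{\beta_d}}$ at a time via Lemma \ref{stabilization}. However, you leave the base case open, and that is where the content lies. Note that no $\beta_{d'}$ is of knot type, since its closure is the $(d',d')$ torus link, so you need a different sub-braid.

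The paper takes $\nu=(\sigma_1\cdots\sigma_{d-1})^{d-1}$. Its closure is the $(d,d-1)$ torus knot, and $F_\nu$ has genus $\frac12(d-1)(d-2)\ge 6$ for $d\ge 5$. Moreover $(F_\nu,\phi_\nu)\subset(F_{\beta_d},\phi_{\beta_d})$ is an inclusion of framed surfaces, so Ferretti gives $\Mod(F_\nu)[\phi_\nu]=\langle T_c : c\in\mathcal{C}_{F_\nu}\rangle$.

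$F_{\beta_d}$ is then $F_\nu$ with the last $d-1$ bands attached. Attach them one at a time. Each curve of $\mathcal{C}_{F_{\beta_d}}\setminus\mathcal{C}_{F_\nu}$ runs through exactly one new band and the adjacent old band at the same level. So it meets the previous surface in a single arc, and the enlarged surface is a regular neighbourhood of the old one union that curve. The arc condition you were worried about is therefore automatic. No row-by-row ordering or analysis of cycles in the grid is needed, and the genus never drops below $5$.

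The other base you propose does not work as stated. $F_{\beta_5}$ bounds the five-component $(5,5)$ torus link, so Ferretti's theorem in the form you quote does not cover it. The Calderon--Salter identification of the monodromy of $X_5$ is a statement about a $2$-spin structure on the \emph{closed} surface. It does not determine the subgroup $\langle T_c : c\in\mathcal{C}_{F_{\beta_5}}\rangle$ inside the framed mapping class group of a surface with five boundary components, because knowing an image under capping does not recover the group upstairs. Theorem \ref{AdmissibleTwists} only gives generation by \emph{all} admissible twists, and reducing those to twists on $\mathcal{C}$ is exactly what is at stake.

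As for your first route, the paper does remark that the statement also follows from \cite[Theorem 2]{Ferretti} supplemented by Remark 5.1 of that paper. But the hypothesis checks you sketch are assertions rather than arguments: primeness via the plumbing graph, and ``$d\le 4$ is where the exceptions live''. So as written, the proof should rest on the $F_\nu$ base.
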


\begin{proof}
This is due to Ferretti following from \cite[Theorem 2]{Ferretti} supplemented with \cite[Remark 5.1]{Ferretti}. It also follows directly from \cite[Theorem 2]{Ferretti} as follows. Let $\nu = (\sigma_1\cdots\sigma_{d-1})^{d-1}$. Then $(F_\nu,\phi_\nu) \subset (F_{\beta_d},\phi_{\beta_d})$ (an inclusion of framed surfaces) and $F_\nu$ is a Seifert surface for the $(d,d-1)$ torus knot. By \cite[Theorem 2]{Ferretti},  $\Mod(F_{\nu})[\phi_\nu] = \langle T_c \,:\, c \in \mathcal{C}_{F_\nu} \rangle$. The surface $F_{\beta_d}$ is obtained by stabilizing $F_\nu$ along the collection of admissible curves in $\mathcal{C}_{F_\beta} \backslash C_{F_\nu}$. As the genus of $F_\nu$ is at least 5, Lemma \ref{stabilization} yields $\Mod(F_\beta)[\phi_\beta] = \langle T_c \,:\, c\in \mathcal{C}_{F_\beta} \rangle$. 
\end{proof}

Henceforth suppress all subscripts, writing $F = F_{\beta_d}$ , $S = F_{\beta_d}\# T$, $P = P_{\beta_d,1}$ and $\phi^+ = \phi^+_{\beta_d}$. Stabilization (Lemma \ref{stabilization}) gives 

\begin{proposition} \label{stabilizealonggenus1}
For $d \geq 5$ we have $\Mod(P)[\phi^+] = \langle \Mod(\widehat{F})[\phi], T_{\delta_1},T_{\delta_2}\rangle$.
\end{proposition}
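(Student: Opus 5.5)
The plan is to apply the stabilization lemma (Lemma \ref{stabilization}) twice, starting from the framed subsurface $\widehat{F} \subset P$. The genus hypothesis is met because $d \geq 5$: $F = F_{\beta_d}$ has genus $\frac{1}{2}(d-1)(d-2) \geq 6$, and $\widehat{F} = F - \mathrm{int}(D_0)$ has the same genus. The restriction of $\phi^+$ to $\widehat{F}$ is $\phi$ by construction, so $\Mod(\widehat{F})[\phi^+] = \Mod(\widehat{F})[\phi]$.

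The first step is to read off from Figure \ref{DeltaCurvesThmB} how $P$ is built from $\widehat{F}$. The curve $\delta_1$ passes once through the connect-sum tube $\partial D_0$: it meets $\widehat{F}$ in a single arc and the torus piece in a single arc. So a regular neighborhood $S_1$ of $\widehat{F} \cup \delta_1$ is $\widehat{F}$ with one $1$-handle attached along $\partial D_0$. The curve $\delta_2$ is geometrically dual to $\delta_1$ in the torus piece and is chosen to meet $S_1$ in a single arc, the remainder running around the other handle of $T$. A regular neighborhood of $S_1 \cup \delta_2$ is then $\widehat{F}$ plus a genus-one piece with two boundary components, $\partial D_1$ and the old boundary. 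This is all of $P = S - \mathrm{int}(D_1)$ up to isotopy. I would confirm the count by Euler characteristic: each stabilization lowers $\chi$ by one, and $\chi(P) = \chi(\widehat{F}) - 2$. Both $\delta_1$ and $\delta_2$ are $\phi^+$-admissible by the choice of extension (Lemma \ref{extendframing}), and both are nonseparating.

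Next, apply Lemma \ref{stabilization} with subsurface $\widehat{F}$ (genus $\geq 5$) and admissible curve $\delta_1$. This gives
\[\Mod(S_1)[\phi^+] = \langle \Mod(\widehat{F})[\phi], T_{\delta_1}\rangle.\]
Then apply it again with subsurface $S_1$, whose genus is at least that of $\widehat{F}$, and admissible curve $\delta_2$, to get
\[\Mod(P)[\phi^+] = \langle \Mod(S_1)[\phi^+], T_{\delta_2}\rangle = \langle \Mod(\widehat{F})[\phi], T_{\delta_1}, T_{\delta_2}\rangle.\]
Throughout, $\Mod$ of a subsurface is regarded inside $\Mod(P)$ via extension by the identity, as in the paper's usage of Lemma \ref{stabilization}.

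The main obstacle is the first, geometric step: checking from the figure that each $\delta_i$ meets the previously built subsurface in exactly one arc, and that the final regular neighborhood is isotopic to all of $P$ rather than to a proper subsurface. If $\delta_1$ and $\delta_2$ as drawn each cross $\partial D_0$, I would instead replace $\delta_2$ by an admissible curve with the same span. One option is the image of $\delta_2$ under a power of $T_{\delta_1}$: by twist-linearity this stays admissible because $\phi^+(\delta_1) = 0$, and the generated group is unchanged. Alternatively, I would order the stabilizations so that the second curve meets $S_1$ in one arc entering through the new handle.
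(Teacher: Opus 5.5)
Your proposal is correct and is the paper's argument: the paper just writes ``Stabilization (Lemma \ref{stabilization}) gives,'' and you fill this in by applying the lemma twice, first to $\widehat{F}$ (genus $\tfrac{1}{2}(d-1)(d-2)\geq 6$) with $\delta_1$, then to the resulting $S_1$ with $\delta_2$. The single-arc hypotheses you worry about in your last paragraph are already built into how the paper defines $\phi^+$, since it extends the framing via Lemma \ref{extendframing} along $\delta_1$ and then $\delta_2$, so your fallback modifications of $\delta_2$ are not needed.
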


Let $p : \Mod(P) \to \Mod(Z_d)$ be the homomorphism induced by extension as the identity. Recall that $Z_d\backslash P$ is a collection of disks.

\begin{proposition} \label{framedchange}
For all $d \geq 5$, $p(\Mod(P)[\phi^+]) \leq \mathcal{E}(\mathbb{C}P^2,Z_d)$. 
\end{proposition}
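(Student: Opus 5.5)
By Proposition \ref{stabilizealonggenus1}, $\Mod(P)[\phi^+]$ is generated by $\Mod(\widehat{F})[\phi]$ together with $T_{\delta_1}$ and $T_{\delta_2}$. Since $p$ is a homomorphism and $\mathcal{E}(\mathbb{C}P^2,Z_d)$ is a subgroup, it suffices to show that $p$ sends each generator into $\mathcal{E}(\mathbb{C}P^2,Z_d)$. The plan is to realize every generator by a rel boundary isotopy of $Z_d\cap B^4$ inside the $0$-handle $B^4$ and then extend by the identity over the $2$-handle, exactly as in the proof of Theorem \ref{thmA}.

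\textbf{Setup.} I take the explicit pushed-in model of $S' = Z_d\cap B^4$ in a collar $S^3\times[0,1]$, as in Subsection \ref{integralliftsection}. The piece $\widehat{F}$ sits at level $1/4$ and the torus piece at level $1/2$. I pull back $\phi^+$ to $P'$. Any element of $\mathcal{E}_\partial(B^4,S')$ extends by the identity to an element of $\mathcal{E}(\mathbb{C}P^2,Z_d)$, because $Z_d\setminus S'$ is the collection of core disks $D$.

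\textbf{The generators $T_{\delta_1}, T_{\delta_2}$.} By Figure \ref{DeltaCurvesThmB}, both $\delta_1$ and $\delta_2$ are Hopf band curves on the Seifert surface $S$. After an isotopy of $S'$ rel $\partial S'$ that places a regular neighborhood of each $\delta_i$ in a single $S^3$ slice of the collar, the Lemma following Proposition \ref{HiroseHopfBand} gives $T_{\delta_i}\in\mathcal{E}_\partial(B^4,S')$. One point needs checking here: $\delta_i$ passes through both the $\widehat F$ level and the torus level, so I will first flatten the relevant neighborhood of $\delta_i$ back into one slice. This is possible because $\delta_i\cup\widehat F$ lies in $S\subset S^3$ and the pushing is only a collar reparametrization.

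\textbf{The factor $\Mod(\widehat{F})[\phi]$.} This is the main obstacle, because Lemma \ref{genframedmcg} describes $\Mod(F)[\phi]$, not $\Mod(\widehat F)[\phi]$ with the extra boundary $\partial D_0$. I would argue as in Proposition \ref{relboundaryhirose}, using the capping (disk-pushing) sequence. Capping $\partial D_0$, which has winding number $-1$ so that the framing extends, gives a surjection $\Mod(\widehat F)[\phi]\to\Mod(F)[\phi]$ whose kernel is generated by disk-pushing maps that preserve $\phi$. The quotient $\Mod(F)[\phi]$ is generated by the $T_c$ with $c\in\mathcal{C}_{F}$ by Lemma \ref{genframedmcg}. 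These curves can be chosen disjoint from $D_0$, so they lift to $\widehat F$, and they are Hopf band curves; hence their twists are extendable by Proposition \ref{HiroseHopfBand}.

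For the kernel, I would avoid handling $\widehat F$ in isolation and instead use that $P'$ contains the whole torus. It then suffices to show that the kernel elements, viewed in $\Mod(P)$, lie in the subgroup generated by the already-realized twists and by multitwists on separating unlinks in a collar slice. The latter are realized via Proposition \ref{separatingunlink}, applied to configurations like $a_L\cup a_R\cup\partial D_0$ near $\partial D_0$.

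If this becomes messy, my fallback is a direct argument. I would use $g(F)\ge 6$ for $d\ge5$, apply Theorem \ref{AdmissibleTwists} to $\widehat F$ to get generation by admissible twists, and then use the framed change-of-coordinates lemma in $\widehat F$. This lets me conjugate any admissible curve to one in $\mathcal{C}_F$ (isotoped off $D_0$) by an element built from Hopf band twists. It requires first knowing that these Hopf band twists generate a subgroup acting transitively on admissible curves of $\widehat F$, which again follows from Lemma \ref{stabilization} applied inductively, building $\widehat F$ from a genus $\ge5$ subsurface of $F$ avoiding $D_0$ along curves of $\mathcal{C}_F$.
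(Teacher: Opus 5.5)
\textbf{Verdict: there is a genuine gap.} It sits in the kernel of the capping map $\Mod(\widehat F)[\phi]\to\Mod(F)[\phi]$, which your main line does not actually handle.

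Your reduction via Proposition~\ref{stabilizealonggenus1}, the treatment of $T_{\delta_1},T_{\delta_2}$, and the lifting of the generators of $\Mod(F)[\phi]$ from Lemma~\ref{genframedmcg} all agree with the paper. For the kernel you only state what would suffice, and the mechanism you point to does not transfer from Proposition~\ref{relboundaryhirose}. There the surface is unknotted, so $a_{k,L}\cup a_{k,R}\cup\Delta$ is an unlink and the tube isotopy supplies $T_{a_k}^2$. On the fiber surface $F$ this fails:
\begin{itemize}
\item $a_L$ and $a_R$ cobound an annulus in $F$ through $D_0$, so $\mathrm{lk}(a_L,a_R)=\mathrm{lk}(a,a^+)$, which is $\pm1$ for a Hopf band curve. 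Hence $a_L\cup a_R\cup\partial D_0$ is not an unlink.
\item Nothing gives you a generating set of $\pi_1(F)$ by unknotted, untwisted curves, nor a tube isotopy for such curves.
\end{itemize}

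The missing observation is that no unlink is needed. Choose generators $\gamma_k$ of $\pi_1(F)$, based on $\partial D_0$, that are Hopf band curves. Each of $\gamma_{k,L},\gamma_{k,R}$ is isotopic in $F$ (across $D_0$) to $\gamma_k$. So its annular neighborhood is isotopic in $S^3$ to that of $\gamma_k$, i.e.\ it is itself a Hopf band curve on $S$. Hence both factors of $\mathrm{Push}(\gamma_k)=T_{\gamma_{k,L}}T_{\gamma_{k,R}}^{-1}$ are realized separately by Proposition~\ref{HiroseHopfBand}. The remaining generator $T_{\partial D_0}$ comes from Proposition~\ref{separatingunlink} applied to the one-component unlink $\partial D_0$, i.e.\ the tube $\partial D_0\times[1/4,1/2]$ in the collar model. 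This puts all of $p(\pi_1(UTF))$ in $\mathcal{E}(\mathbb{C}P^2,Z_d)$, so you never need to decide which push maps preserve $\phi$.

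Your fallback avoids the Birman sequence and is a viable alternative route, but as written it has two holes.
\begin{itemize}
\item Lemma~\ref{stabilization} only propagates generation, so you must name a base case. You need Ferretti's theorem for a genus $\ge5$ subsurface disjoint from $D_0$. For example, take a regular neighborhood $N$ of the curves of $\mathcal{C}_F$ pushed off $D_0$; this is a copy of $F$, and Lemma~\ref{genframedmcg} applies.
\item Stabilizing only along curves of $\mathcal{C}_F$ pushed off $D_0$ never produces $\widehat F$, since $\chi(N)=\chi(F)=\chi(\widehat F)+1$. You must stabilize once more along a curve running on the other side of $D_0$. You then need to check that this curve is admissible (it is isotopic in $F$ to a curve of $\mathcal{C}_F$, and $\phi$ is a framing of all of $F$) and that it is a Hopf band curve. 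That is exactly the observation above.
\end{itemize}
With this repair, Lemma~\ref{stabilization} shows directly that $\Mod(\widehat F)[\phi]$ is generated by Hopf band twists. The change-of-coordinates step then becomes unnecessary.
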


\begin{proof}
By Proposition \ref{stabilizealonggenus1}, $\Mod(P)[\phi^+] = \langle \Mod(\widehat{F})[\phi], T_{\delta_1},T_{\delta_2} \rangle$. As $\delta_1$ and $\delta_2$ are Hopf band curves, it follows from Proposition \ref{HiroseHopfBand} that their image under $p$ is in $\mathcal{E}(\mathbb{C}P^2,Z_d)$. It remains to prove that \[p(\Mod(\widehat{F})[\phi]) \leq \mathcal{E}(\mathbb{C}P^2,Z_d).\]  Capping $\widehat{F}$ with $D_0$ and extending mapping classes by the identity over $F$ induces a short exact sequence 
$$ 1 \to \pi_1(UTF) \to \Mod(\widehat{F}) \to \Mod(F) \to 1,$$ 
where $UTF$ denotes the unit tangent bundle of $F$.  If  $\widetilde{\gamma} \in \pi_1(UTF)$ has image $\gamma \in \pi_1(F)$ (based at point in $D_0$) under projection $UTF\to F$, the image of $\widetilde{\gamma}$ in $\Mod(\widehat{F})$ is the multitwist $T_{\gamma_L}T_{\gamma_R}^{-1}T_{\partial D_0}^k$ for some $k \in \mathbb{Z}$, where $\gamma_L$ (resp. $\gamma_R$) is obtained by pushing $\gamma$ to the left (resp. right) over $D_0$. 

Using the generating set established in Lemma \ref{genframedmcg}, we see that capping is surjective on the level of framed mapping class groups, and we have a short exact sequence $$1 \to \pi_1(UTF) \cap \Mod(\widehat{F})[\phi] \to \Mod(\widehat{F})[\phi] \to \Mod(F)[\phi]\to 1.$$  The group $\Mod(\widehat{F})[\phi]$ is generated by lifting a generating set for $\Mod(F)[\phi]$ together with $\pi_1(UTF) \cap \Mod(\widehat{F})[\phi]$. The generating set for $\Mod(F)[\phi]$ supplied by Lemma \ref{genframedmcg} consists of Dehn twists about Hopf band curves supported in $\widehat{F}$. It remains to now show \[p(\pi_1(UTF) \cap \Mod(\widehat{F})[\phi]) \leq \mathcal{E}(\mathbb{C}P^2,Z_d).\] In fact we prove \[p(\pi_1(UTF))\leq \mathcal{E}(\mathbb{C}P^2,Z_d).\] Choose a generating set $\{\gamma_k\}_{k=1}^{n_d}$ for $\pi_1(F)$ consisting of simple closed curves based at a point on $ \partial D_0$, each of which is a Hopf band curve on $F$. We recall that the image of $\pi_1(UTF)$ in $\Mod(\widehat{F})$ is generated by $\textrm{Push}(\gamma_k) = T_{\gamma_k,L}T_{\gamma_k,R}^{-1}$ for $k = 1,\ldots, n_d$ together with $T_{\partial D_0}$. Both $\gamma_{k,L}$ and $\gamma_{k,R}$ are Hopf band curves so $p_Z(\textrm{Push}(\gamma_k)) \in \mathcal{E}(\mathbb{C}P^2,Z_d)$, and by Proposition \ref{separatingunlink}, $p_Z(T_{\Delta}) \in \mathcal{E}(\mathbb{C}P^2,Z_d)$.
\end{proof}

We now exploit the existence of a compression curve on the surface $S$ to manufacture more extendable mapping classes in $\mathcal{E}(\mathbb{C}P^2,Z_d)$. Such a curve does not exist on the surface $X_d$,  as it minimizes genus in its homology class (Thom Conjecture, \cite{KronMrowka}).

\begin{proposition} \label{existcurves}
There exists a nonseparating simple closed curve $\alpha \subset \widehat{F} \subset S$ for which $\phi(\alpha) = \pm2$ and $p(T_{\alpha})\in\mathcal{E}(\mathbb{C}P^2,Z_d)$. 
\end{proposition}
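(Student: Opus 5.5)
We will produce $\alpha$ from a curve meeting the disks of $Z_d\backslash S$ or the torus $T$, and transport back into $\widehat{F}$. Take two Hopf band curves $c_1,c_2\in\mathcal{C}_F$ with $i(c_1,c_2)=1$ and $\phi(c_1)=\phi(c_2)=0$. The curve $T_{c_1}(c_2)$ still has winding number zero by twist-linearity, so we must leave the framed setting. The natural candidate is a \emph{band sum} of an admissible curve $c\in\mathcal{C}_F$ with the compression curve $m$ of $T$ (a meridian of the unknotted torus, bounding a disk in $S^3\backslash S$). Its winding number is $\phi(c)+\phi^+(m)\pm1$, changed by the turning of the band. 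Choose the band so that the result $\alpha$ is nonseparating with $\phi^+(\alpha)=\pm2$.

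First, I would show $p(T_\alpha)$ is extendable. Because $m$ bounds a compressing disk $E$ in $S^3$ disjoint from the interior of $S$, surgering along $E$ isotopes $Z_d$ in $\mathbb{C}P^2$ to the "unstabilized" picture. Alternatively and more concretely, I would realize $\alpha$ directly as a curve in $S^3$ whose annular neighborhood on $S$ is an unknotted annulus. Its framing is then read off from $\mathrm{lk}(\alpha,\alpha^+)$. The plan is to arrange $\alpha$ and a parallel copy to form, together with boundary curves, a separating unlink in a collar slice. Then Proposition \ref{separatingunlink}, combined with the tube isotopy (Figure \ref{squaretwist}) and twists already known to be extendable (Proposition \ref{framedchange}), produces $T_\alpha$ as a product of extendable elements.

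Concretely, I would take $\alpha_L,\alpha_R$ as pushes of $c$ across the handle of $T$, in the spirit of the disk-pushing argument in Proposition \ref{relboundaryhirose}. Here $\alpha_L\cup\alpha_R\cup m'$ bounds a pair of pants and forms a 3-component unlink in $S^3$. Proposition \ref{separatingunlink} then gives $T_{\alpha_L}T_{\alpha_R}^{-1}T_{m'}^{\pm1}$ up to known factors. $T_{m'}$ comes from the separating-unlink lemma applied to the single unknotted component, and one of $T_{\alpha_L},T_{\alpha_R}$ is a Hopf band curve, handled by Proposition \ref{HiroseHopfBand}. So the other twist is extendable, and by homological coherence it has winding number $\pm2$. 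Finally I would apply an element of $p(\Mod(P)[\phi^+])$ from Proposition \ref{framedchange} to move it into $\widehat{F}$ by the framed change-of-coordinates lemma. This preserves both the winding number and extendability by conjugation.

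The main obstacle is the geometric verification that the chosen triple really is a split unlink in an $S^3$ slice after an isotopy rel boundary, and that the winding-number bookkeeping yields $\pm2$ rather than $0$. I would check this on the explicit picture of Figure \ref{DeltaCurvesThmB}, using $\delta_1,\delta_2$ as the torus curves.
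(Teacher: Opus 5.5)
The final step of your proposal is sound: once some nonseparating curve in $P$ with $\phi^+=\pm2$ has its twist in $\mathcal{E}(\mathbb{C}P^2,Z_d)$, the framed change-of-coordinates lemma together with Proposition \ref{framedchange} carries it into $\widehat{F}$. The gap is the step meant to produce that twist. It fails for a structural reason, not just for want of a picture.

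Suppose disjoint curves $\gamma_1,\gamma_2,\gamma_3\subset S$ cobound a pair of pants $Q\subset S$ and form an unlink in $S^3$. Push $Q$ off $S$. The pushed copy is disjoint from $\gamma_1$ and has boundary $\gamma_1^+\cup\gamma_2^+\cup\gamma_3^+$, so $\mathrm{lk}(\gamma_1,\gamma_1^+)=-\mathrm{lk}(\gamma_1,\gamma_2)-\mathrm{lk}(\gamma_1,\gamma_3)=0$, and likewise for $\gamma_2,\gamma_3$. Hence every component of such a triple has self-linking $0$. None of them is a Hopf band curve, and by Proposition \ref{framingmod2} each has odd $\phi^+$-winding. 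So it can never be the winding-$\pm2$ curve you want to isolate.

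Your two assertions, ``one of $\alpha_L,\alpha_R$ is a Hopf band curve'' and ``the other has winding $\pm2$'', are therefore incompatible with the unlink hypothesis. You can see this directly: $\alpha_L$ is a parallel copy of $c=\alpha_R$ on $S$ banded to a curve split from $c$, so $\mathrm{lk}(\alpha_L,\alpha_R)=\mathrm{lk}(c,c^+)=\pm1$.

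There are also secondary problems:
\begin{enumerate}
\item Proposition \ref{separatingunlink} for a single component requires that component to separate the surface, and the meridian $m$ does not.
\item Surgery along $E$ changes the genus, so it is not an isotopy.
\item The tube isotopy only yields squares of twists.
\item $\phi^+$ is determined by $\mathrm{lk}(\alpha,\alpha^+)$ only mod $2$. Curves in $\mathcal{C}_F$ have $\phi=0$ and self-linking $\pm1$, while the paper's $\alpha$ has $\phi=2$ and self-linking $-3$.
\end{enumerate}

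The missing idea is that Hopf band twists are extendable whatever their $\phi^+$-winding. So you leave $\Mod(P)[\phi^+]$ through Hopf band curves, not through separating unlinks.

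The paper takes $\alpha\subset\widehat{F}$ with $\phi(\alpha)=2$ and $\mathrm{lk}(\alpha,\alpha^+)=-3$, together with a Hopf band curve $\delta\subset S$ with $\mathrm{lk}(\alpha,\delta^+)=1$ and $\mathrm{lk}(\delta,\alpha^+)=0$. Since $[T_\delta^{-1}(\alpha)]=[\alpha]+[\delta]$, the curve $T_\delta^{-1}(\alpha)$ has self-linking $-3+1+0+1=-1$. It is unknotted by inspection, so it is a Hopf band curve. Then $T_\alpha=T_\delta\,T_{T_\delta^{-1}(\alpha)}\,T_\delta^{-1}$ is a product of extendable twists.

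Within your own framework there is an even quicker repair: slide a curve $c\in\mathcal{C}_F$ once across $D_1$. The resulting curve $c'\subset P$ has three properties:
\begin{enumerate}
\item It is nonseparating.
\item It is isotopic to $c$ in $S$, so $p(T_{c'})=p(T_c)\in\mathcal{E}(\mathbb{C}P^2,Z_d)$ by Proposition \ref{HiroseHopfBand}.
\item It has $\phi^+(c')=\pm(\phi^+(\partial D_1)+1)=\pm2$.
\end{enumerate}
Your final change-of-coordinates step then finishes the argument.
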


\begin{proof}
Consider the oriented curves $\alpha \subset \widehat{F}$ and $\delta \subset S$ depicted in Figure \ref{existenceofcurves} with $\phi(\alpha) = 2$ and $\mathrm{lk}(\alpha,\alpha^+) = -3$. The curve $\delta$ is an unknot with $\textrm{lk}(\delta,\delta^+) = +1$ i.e. a Hopf band curve (see Definition \ref{Hopfbandcurve}). By Proposition \ref{HiroseHopfBand}, $T_{\delta} \in \mathcal{E}(\mathbb{C}P^2,Z_d)$. %We compute $\mathrm{lk}(\alpha,\delta^+) = +1$ and $\mathrm{lk}(\delta,\alpha^+) = 0$.
As $[T_{\delta}^{-1}(\alpha)] = [\alpha] + [\delta] \in H_1(F;\mathbb{Z})$ it follows that \[\textrm{lk}(T_{\delta}^{-1}(\alpha),T_{\delta}^{-1}(\alpha)^+) = \mathrm{lk}(\alpha,\alpha^+) + \mathrm{lk}(\alpha,\delta^+) + \mathrm{lk}(\delta,\alpha^+) + \mathrm{lk}(\delta,\delta^+) = -3 + 1 + 0 + 1 =  -1.\] By inspection, $T_{\delta}^{-1}(\alpha)$ is an unknot and therefore a Hopf band curve. By Proposition \ref{HiroseHopfBand}, we have $T_{T_{\delta}^{-1}(\alpha)} \in \mathcal{E}(\mathbb{C}P^2,Z_d)$. In $\Mod(Z_d)$, we have \[T_{T_{\delta}^{-1}(\alpha)} = T_{\delta}^{-1}T_{\alpha}T_{\delta}, \] and so $p(T_{\alpha}) \in \mathcal{E}(\mathbb{C}P^2,Z_d)$. 
\end{proof}

\begin{figure}
  \centering
  \def\svgwidth{0.9\columnwidth}
  %% Creator: Inkscape 1.4 (e7c3feb1, 2024-10-09), www.inkscape.org
%% PDF/EPS/PS + LaTeX output extension by Johan Engelen, 2010
%% Accompanies image file 'UntwistingCurves.pdf' (pdf, eps, ps)
%%
%% To include the image in your LaTeX document, write
%%   \input{<filename>.pdf_tex}
%%  instead of
%%   \includegraphics{<filename>.pdf}
%% To scale the image, write
%%   \def\svgwidth{<desired width>}
%%   \input{<filename>.pdf_tex}
%%  instead of
%%   \includegraphics[width=<desired width>]{<filename>.pdf}
%%
%% Images with a different path to the parent latex file can
%% be accessed with the `import' package (which may need to be
%% installed) using
%%   \usepackage{import}
%% in the preamble, and then including the image with
%%   \import{<path to file>}{<filename>.pdf_tex}
%% Alternatively, one can specify
%%   \graphicspath{{<path to file>/}}
%% 
%% For more information, please see info/svg-inkscape on CTAN:
%%   http://tug.ctan.org/tex-archive/info/svg-inkscape
%%
\begingroup%
  \makeatletter%
  \providecommand\color[2][]{%
    \errmessage{(Inkscape) Color is used for the text in Inkscape, but the package 'color.sty' is not loaded}%
    \renewcommand\color[2][]{}%
  }%
  \providecommand\transparent[1]{%
    \errmessage{(Inkscape) Transparency is used (non-zero) for the text in Inkscape, but the package 'transparent.sty' is not loaded}%
    \renewcommand\transparent[1]{}%
  }%
  \providecommand\rotatebox[2]{#2}%
  \newcommand*\fsize{\dimexpr\f@size pt\relax}%
  \newcommand*\lineheight[1]{\fontsize{\fsize}{#1\fsize}\selectfont}%
  \ifx\svgwidth\undefined%
    \setlength{\unitlength}{539.36837264bp}%
    \ifx\svgscale\undefined%
      \relax%
    \else%
      \setlength{\unitlength}{\unitlength * \real{\svgscale}}%
    \fi%
  \else%
    \setlength{\unitlength}{\svgwidth}%
  \fi%
  \global\let\svgwidth\undefined%
  \global\let\svgscale\undefined%
  \makeatother%
  \begin{picture}(1,0.49939142)%
    \lineheight{1}%
    \setlength\tabcolsep{0pt}%
    \put(0,0){\includegraphics[width=\unitlength,page=1]{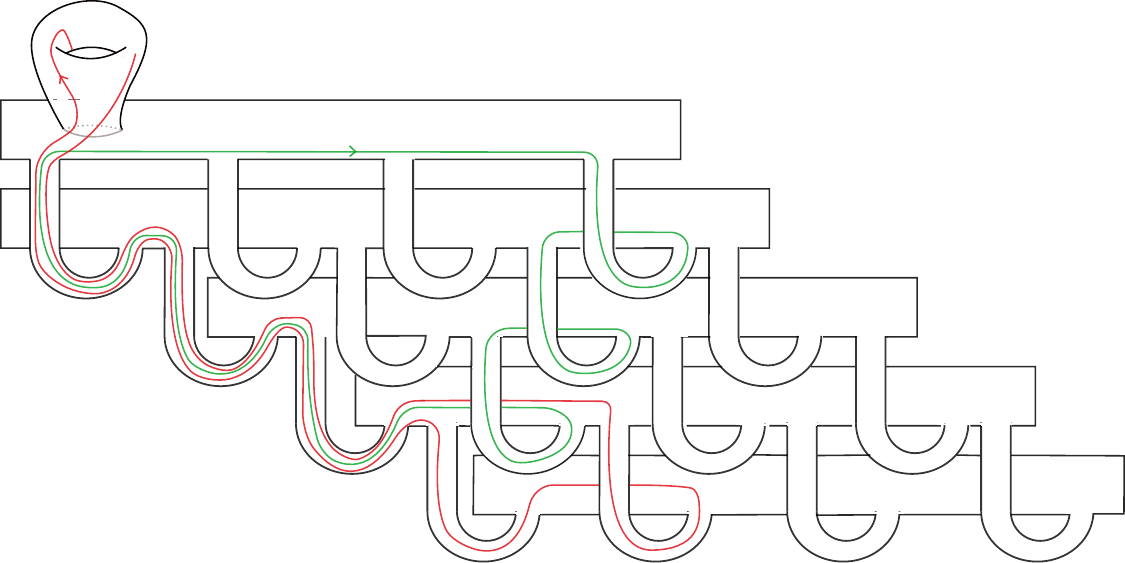}}%
    \put(0.3671256,0.37419762){\color[rgb]{0.07058824,0.02745098,0.02745098}\transparent{0.85097998}\makebox(0,0)[lt]{\lineheight{1.25}\smash{\begin{tabular}[t]{l}$\alpha$\end{tabular}}}}%
    \put(0.00229204,0.38842256){\color[rgb]{0.07058824,0.02745098,0.02745098}\transparent{0.85097998}\makebox(0,0)[lt]{\lineheight{1.25}\smash{\begin{tabular}[t]{l}$\delta$\end{tabular}}}}%
    \put(0,0){\includegraphics[width=\unitlength,page=2]{UntwistingCurves.pdf}}%
  \end{picture}%
\endgroup%

  \caption{\textnormal{As depicted the calculation takes place on $S_{\nu,1}$ for $\nu= (\sigma_1\sigma_2\sigma_3\sigma_4)^4$, which is naturally a subsurface of $S_{\beta_d,1}$ for all $d \geq 5$.}}
  \label{existenceofcurves}
\end{figure}

We now use the  framed change-of-coordinates principle (Proposition \ref{framedchange}) to deal with arbitrary nonseparating simple closed curves.

\begin{lemma}\label{untwist}
Let $\gamma \subset P \subset Z_d = X_d \# T$ be a nonseparating simple closed curve with $\phi^+(\gamma) = 0 \mod 2$.  Then $p(T_{\gamma})\in \mathcal{E}(\mathbb{C}P^2,Z_d)$. If $d$ is even, then for any nonseparating simple closed curve $\delta \subset P$, we have $p(T_\delta) \in \mathcal{E}(\mathbb{C}P^2,Z_d)$.  
\end{lemma}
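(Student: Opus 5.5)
The plan is to reduce everything to the framed change-of-coordinates principle on $P$. By Proposition \ref{framedchange}, the whole framed mapping class group $p(\Mod(P)[\phi^+])$ lies in $\mathcal{E}(\mathbb{C}P^2,Z_d)$. If two nonseparating curves $\gamma,\gamma'\subset P$ satisfy $\phi^+(\gamma)=\phi^+(\gamma')$, the framed change-of-coordinates lemma gives $f\in\Mod(P)[\phi^+]$ with $f(\gamma)=\gamma'$. Then $T_{\gamma'}=fT_\gamma f^{-1}$, so extendability of $p(T_{\gamma})$ transfers to $p(T_{\gamma'})$. It therefore suffices to exhibit, for each even value $2k$, one nonseparating curve of $\phi^+$-winding number $2k$ whose twist is extendable, and we may also use isotopies in $Z_d$ that change the winding number.

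\textbf{Winding number $0$.} Such curves are admissible, so $T_\gamma\in\Mod(P)[\phi^+]$ by twist-linearity, and $p(T_\gamma)$ is extendable directly.

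\textbf{Winding number $\pm 2$.} Proposition \ref{existcurves} supplies $\alpha\subset\widehat F\subset P$ with $\phi^+(\alpha)=\pm 2$ and $p(T_\alpha)$ extendable. Reversing orientation (reversibility) covers the opposite sign, and the Dehn twist is independent of orientation.

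\textbf{General even values.} Exploit capping: $Z_d\setminus P$ consists of $D_1$ and the $d$ core disks of the 2-handle. Isotoping a curve across $D_1$ changes $\phi^+$ by $\phi^+(\partial D_1)+1=-2$ (the computation of Subsection \ref{integralliftsection} with $g=1$). The resulting curve $\gamma'\subset P$ is isotopic to $\gamma$ in $Z_d$, so $p(T_\gamma)=p(T_{\gamma'})$. Iterating this, any curve of winding number $2k$ is isotopic in $Z_d$ to one of winding number $0$, which settles the first claim. With that shift in hand, the value $\pm2$ from Proposition \ref{existcurves} is not strictly needed, but it confirms the odd-shift bookkeeping is consistent.

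\textbf{Even $d$.} Here we need curves of odd winding number. Use the boundary components $\Delta$ of $F$, which bound core disks of the 2-handle. Each component of the $(d,d)$ torus link has linking number $1$ with each of the others, hence total linking $d-1$ with the rest, which is odd when $d$ is even. The argument of Proposition \ref{geometricwitness} then shows that sliding a curve across the disk bounded by $\Delta$ changes the linking form by $1$ mod $2$. Since $\phi^+$ is an integral lift of the linking form, $\phi^+\equiv\theta_S+1 \pmod 2$, so the slide changes the parity of $\phi^+$. Given $\delta$ with $\phi^+(\delta)$ odd, slide it across that disk to obtain $\widetilde\delta\subset P$ with $\phi^+(\widetilde\delta)$ even. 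The curve $\widetilde\delta$ is isotopic to $\delta$ in $Z_d$, so $p(T_\delta)=p(T_{\widetilde\delta})$, which is extendable by the first part.

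The main obstacle is the bookkeeping of the slides. One must ensure that the slid curve stays nonseparating in $P$, which holds because it is isotopic in $Z_d$ to a nonseparating curve. One must also make the sign and magnitude of the change across $D_1$ precise, getting $-2$ rather than $0$, which depends on the orientation conventions for $\partial D_1$. If this shift fails, I would fall back on combining the $\pm2$ curves of Proposition \ref{existcurves} with twist-linearity, $\phi(T_a(b))=\phi(b)+\langle b,a\rangle\phi(a)$. Taking $a=\alpha$ with $\phi(\alpha)=\pm2$ and a dual curve $b$ lets one realize every even value through conjugation by extendable twists.
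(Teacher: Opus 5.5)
Your proof is correct, and for the first claim it takes a genuinely different route from the paper's.

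\textbf{The paper's route.} The paper never leaves $P$. It takes the curve $\alpha$ of Proposition~\ref{existcurves}, with $\phi^+(\alpha)=2$ and $p(T_\alpha)$ extendable, together with an admissible curve $\beta$ dual to $\alpha$. Twist-linearity shows that $\mu=T_\alpha^k(\beta)$ has winding number $2k$, and $T_\mu=T_\alpha^kT_\beta T_\alpha^{-k}$ is extendable. The framed change-of-coordinates lemma then moves $\mu$ to $\gamma$ inside $\Mod(P)[\phi^+]$. This is exactly your fallback.

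\textbf{Your route.} You instead isotope across the cap $D_1$ in $Z_d$, and this is sound. Homological coherence on $\widehat{\Sigma}_1-\mathrm{int}(D_1)$, together with $\phi^+(\partial D_0)=1$, forces $\phi^+(\partial D_1)=-3$ (oriented with $P$ to its left). This holds regardless of how the framing was extended across the handle. So a slide across $D_1$ shifts the winding number by $-2$ or $+2$, depending on which side of $\gamma$ you reach $D_1$ from. Both sides are available because $\gamma$ is nonseparating; alternatively, reverse orientation to assume $k\ge 0$. Hence every even-winding curve is isotopic in $Z_d$ to a winding-zero curve of $P$. Its twist lies in $\Mod(P)[\phi^+]$ by twist-linearity, so it is extendable by Proposition~\ref{framedchange}.

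\textbf{Comparison.} Your argument is shorter and uses less. As written, it needs neither Proposition~\ref{existcurves} nor the change-of-coordinates lemma. It also makes visible why one handle suffices: the shift $\phi^+(\partial D_1)+1=-2g$ equals $-2$ when $g=1$, so the induced winding number function on $S$ is $\mathbb{Z}/2$-valued. The paper's mechanism instead produces all even winding numbers from a single extra extendable twist, and it does not depend on the value of $\phi^+(\partial D_1)$. It would therefore still apply where capping shifts winding numbers by more than $2$, for instance a genus-$g$ stabilization punctured once.

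Your treatment of even $d$ coincides with the paper's. You also usefully make explicit why the $(d,d)$ torus link is not proper, which the paper leaves to inspection.
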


\begin{proof}
Orient $\gamma$ and write $\phi^+(\gamma) = 2k$. By Proposition \ref{existcurves},  there exists a nonseparating simple closed curve $\alpha \subset \widehat{F} \subset P$ with $\phi^+(\alpha) = 2$ and $p(T_{\alpha}) \in \mathcal{E}(\mathbb{C}P^2,Z_d)$. Let $\beta$ be a geometrically dual admissible curve to $\alpha$ (by Lemma \ref{dualcurve}). Let $\mu = T^k_\alpha(\beta)$. By twist-linearity, $\phi^+(\mu) = 2k$, and \[T_{\mu} = T_{\alpha}^k T_\beta T_{\alpha}^{-k}.\]By Proposition \ref{framedchange}, $p(T_\beta) \in \mathcal{E}(\mathbb{C}P^2,Z_d)$ and so $T_\mu \in \mathcal{E}(\mathbb{C}P^2,Z_d)$. By the framed change-of-coordinates principle on $P$ (Lemma \ref{framedchange}), there exists $f \in \Mod(P)[\phi^+]$ with $f(\mu) = \gamma$. Now $T_{\gamma} = fT_{\mu}f^{-1}$, and by Corollary \ref{framedchange},  $p(f) \in \mathcal{E}(\mathbb{C}P^2,Z_d)$. It follows that $p(T_{\gamma}) \in \mathcal{E}(\mathbb{C}P^2,Z_d)$.

Assume now that $d$ is even. By the above, we may assume that $\phi^+(\delta) = 1 \mod 2$. By Proposition \ref{geometricwitness}, we may isotope $\delta$ on $Z_d$ across a disk in $D$ (a parallel copy of the core of the 2-handle of $\mathbb{C}P^2$) obtaining  $\widetilde{\delta}$ with $\phi^+(\widetilde{\delta}) = 0 \mod 2$. As $\delta$ and $\widetilde{\delta}$ are isotopic curves on $Z_d$, we conclude $p_{Z_d}(T_\delta) = p_{Z_d}(T_{\widetilde{\delta}}) \in \mathcal{E}(\mathbb{C}P^2,Z_d)$. 
\end{proof}

\begin{figure}
  \centering
  \def\svgwidth{0.9\columnwidth}
  %% Creator: Inkscape 1.4 (e7c3feb1, 2024-10-09), www.inkscape.org
%% PDF/EPS/PS + LaTeX output extension by Johan Engelen, 2010
%% Accompanies image file 'Degree4Curves.pdf' (pdf, eps, ps)
%%
%% To include the image in your LaTeX document, write
%%   \input{<filename>.pdf_tex}
%%  instead of
%%   \includegraphics{<filename>.pdf}
%% To scale the image, write
%%   \def\svgwidth{<desired width>}
%%   \input{<filename>.pdf_tex}
%%  instead of
%%   \includegraphics[width=<desired width>]{<filename>.pdf}
%%
%% Images with a different path to the parent latex file can
%% be accessed with the `import' package (which may need to be
%% installed) using
%%   \usepackage{import}
%% in the preamble, and then including the image with
%%   \import{<path to file>}{<filename>.pdf_tex}
%% Alternatively, one can specify
%%   \graphicspath{{<path to file>/}}
%% 
%% For more information, please see info/svg-inkscape on CTAN:
%%   http://tug.ctan.org/tex-archive/info/svg-inkscape
%%
\begingroup%
  \makeatletter%
  \providecommand\color[2][]{%
    \errmessage{(Inkscape) Color is used for the text in Inkscape, but the package 'color.sty' is not loaded}%
    \renewcommand\color[2][]{}%
  }%
  \providecommand\transparent[1]{%
    \errmessage{(Inkscape) Transparency is used (non-zero) for the text in Inkscape, but the package 'transparent.sty' is not loaded}%
    \renewcommand\transparent[1]{}%
  }%
  \providecommand\rotatebox[2]{#2}%
  \newcommand*\fsize{\dimexpr\f@size pt\relax}%
  \newcommand*\lineheight[1]{\fontsize{\fsize}{#1\fsize}\selectfont}%
  \ifx\svgwidth\undefined%
    \setlength{\unitlength}{496.86521035bp}%
    \ifx\svgscale\undefined%
      \relax%
    \else%
      \setlength{\unitlength}{\unitlength * \real{\svgscale}}%
    \fi%
  \else%
    \setlength{\unitlength}{\svgwidth}%
  \fi%
  \global\let\svgwidth\undefined%
  \global\let\svgscale\undefined%
  \makeatother%
  \begin{picture}(1,0.48014189)%
    \lineheight{1}%
    \setlength\tabcolsep{0pt}%
    \put(0,0){\includegraphics[width=\unitlength,page=1]{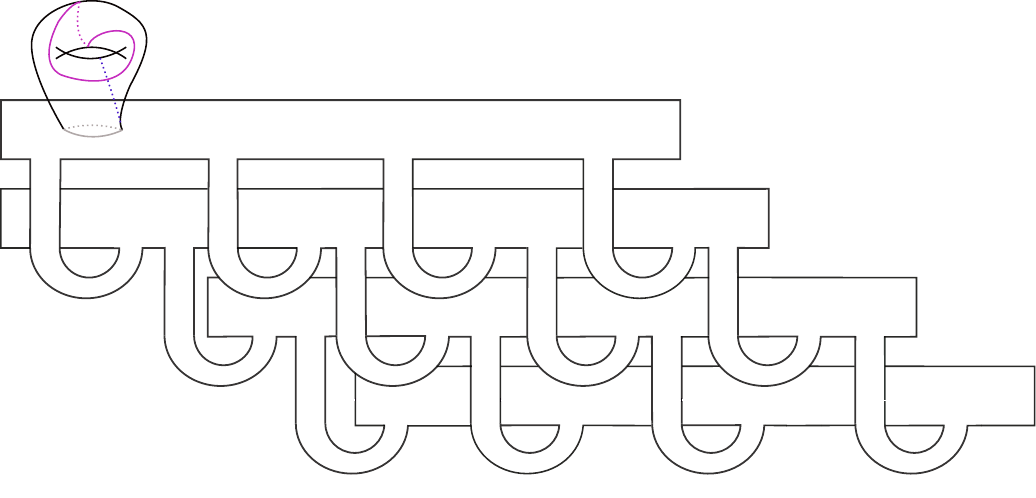}}%
    \put(0.43863311,0.36157189){\color[rgb]{0.05098039,0.02352941,0.03921569}\makebox(0,0)[lt]{\lineheight{1.25}\smash{\begin{tabular}[t]{l}$\alpha$ \end{tabular}}}}%
    \put(0,0){\includegraphics[width=\unitlength,page=2]{Degree4Curves.pdf}}%
  \end{picture}%
\endgroup%

  \caption{\textnormal{Dehn twists on the depicted configuration of curves yields a generating set for $\Mod(Z_4)$. The configuration is equivalent to the standard Humphries generating set for the mapping class group of a closed surface of genus four \cite{Primer}. }}
  \label{Degree4}
\end{figure}

Having constructed an ample supply of Dehn twists in $\mathcal{E}(\mathbb{C}P^2,Z_d)$ we prove Theorem \ref{ThmB}. Recall that if $d$ is odd $Z_d = X_d \# T \subset \mathbb{C}P^2$ is characteristic, $q_{Z_d} : H_1(Z_d;\mathbb{Z}/2\mathbb{Z}) \to \mathbb{Z}/2\mathbb{Z}$ denotes Rochlin's quadratic form, and $Z_d\backslash P$ is a collection of 2-disks. 

\begin{maintheorem} \label{ThmB}
Let $X_d \subset \mathbb{C}P^2$ be a smooth plane curve of degree $d$.  Let $T \subset S^4$ be the boundary of a solid torus in $S^4$. Let $Z_d$ be the stabilization $X_d \# T \subset \mathbb{C}P^2 \# S^4 \cong \mathbb{C}P^2$. If $d$ is even, $Z_d$ is flexible. If $d$ is odd,  $Z_d$ is spin-flexible. 
\end{maintheorem}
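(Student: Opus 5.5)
The plan is to treat $d \geq 5$ uniformly using Lemma \ref{untwist}, and to handle $d \leq 4$ by ad hoc configurations.

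\textbf{Case $d\geq 5$, $d$ even.} Every nonseparating simple closed curve on $Z_d$ can be isotoped into $P$, because $Z_d\backslash P$ is a disjoint union of disks. Lemma \ref{untwist} then puts $p(T_\delta)$ in $\mathcal{E}(\mathbb{C}P^2,Z_d)$ for every nonseparating $\delta$. Dehn twists on nonseparating curves generate $\Mod(Z_d)$, so $Z_d$ is flexible.

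\textbf{Case $d\geq 5$, $d$ odd.} Now $Z_d$ is characteristic. Its genus is $\frac{1}{2}(d-1)(d-2)+1\geq 3$, so Theorem \ref{admissiblespin} together with Proposition \ref{QuadraticFormSpin} gives
\[\Mod(Z_d)[q_{Z_d}]=\langle T_\gamma \,|\, \gamma \text{ nonseparating},\ q_{Z_d}(\gamma)=1\rangle.\]
Take such a $\gamma$ and isotope it into $P$. By Proposition \ref{Rochlinislink}, $q_{Z_d}$ restricts to $\theta_S$. The framing $\phi^+$ is an integral lift of the linking form, in the sense of Proposition \ref{framingmod2} with the curves $\delta_1,\delta_2$ as in Figure \ref{DeltaCurvesThmB}. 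Hence $\phi^+(\gamma)\equiv q_{Z_d}(\gamma)+1\equiv 0 \pmod 2$. Lemma \ref{untwist} then gives $p(T_\gamma)\in\mathcal{E}$, so $\Mod(Z_d)[q_{Z_d}]\leq \mathcal{E}(\mathbb{C}P^2,Z_d)$. The reverse containment is Proposition \ref{RochlinsForm}, which proves spin-flexibility.

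The one point to check is that the parity identity holds on all of $P$ and not only on a basis. This is exactly the argument of Proposition \ref{framingmod2}: $\phi^+ +1 \pmod 2$ and $\theta_S$ are both quadratic forms, and they agree on the basis $\mathcal{C}_F\cup\{\delta_1,\delta_2\}$, all of whose members are Hopf band curves with $\phi^+=0$.

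\textbf{Cases $d\leq 4$.} These are the main obstacle, since Lemma \ref{genframedmcg} and Lemma \ref{stabilization} need genus $\geq 5$ inside $F$. For $d=1,2$ the curve $X_d$ is a sphere, so $Z_d$ is a torus. For $d=1$, $Z_1$ is characteristic; for $d=2$ I would use the argument of Figure \ref{slide}. In both, one exhibits a standard dual pair of Hopf band curves, possibly after sliding across disks of $D$, and applies Proposition \ref{HiroseHopfBand}; for $d=1$ one uses Hopf band curves together with the tube isotopy and compares against Montesinos' generators of the spin mapping class group of the torus. For $d=4$ I would follow Figure \ref{Degree4}. There one finds a Humphries-type chain of curves on $Z_4$ whose members are Hopf band curves on $S$, or become Hopf band curves after conjugating by an extendable twist as in Proposition \ref{existcurves}, or after sliding across a disk of $D$ using Proposition \ref{geometricwitness}. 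Proposition \ref{HiroseHopfBand} makes each generating twist extendable, so $Z_4$ is flexible. For $d=3$, $Z_3=X_3\# T$ is spin-flexible directly by Hirose's \cite[Theorem 5.1]{HiroseCP^2} with $g=1$.

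The hardest part I expect is verifying the explicit $d=4$ configuration: each curve must genuinely be an unknot with self-linking $\pm1$ after the indicated slides and conjugations, and this is a picture-by-picture linking-number check.
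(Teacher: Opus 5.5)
Your proposal is correct and is essentially the paper's proof. The $d\geq 5$ cases go through Lemma~\ref{untwist} and the mod~$2$ agreement of $\phi^+$ with $\theta_S$ exactly as in the paper, and $d=2,4$ use the same pictures. Your only deviations are citing Hirose's genus-one theorem for $d=3$ (the paper instead realizes Hamenst\"adt's chain of four admissible curves by Hopf band curves on $S_{\beta_3,1}$) and a hands-on torus argument for $d=1$ (the paper simply invokes Proposition~\ref{relboundaryhirose}).

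One small slip: for $d=1$ there is no geometrically dual pair of Hopf band curves, because two dual curves with $q_{Z_1}=1$ would force $\mathrm{Arf}(q_{Z_1})=1$, whereas $q_{Z_1}$ is even. It is your tube-isotopy clause that actually does the work there: for instance $T_aT_bT_a=T_{T_a(b)}T_a^2$ with $T_a(b)$ a $(1,\pm1)$ Hopf band curve, and this together with $T_a^2$ generates the stabilizer.
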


\begin{proof}
We start with the general case $d \geq 5$. Assume that $d$ is even. Let $\gamma \subset Z_d$ be a nonseparating simple closed curve. By isotopy if necessary, we may assume that $\gamma \subset P$. Then \[\phi^+(\gamma) = 0 \textrm{ or } 1 \mod 2.\] In either case, by Lemma \ref{untwist}, it follows that $T_{\gamma} \in \mathcal{E}(\mathbb{C}P^2,Z_d)$. As $\Mod(Z_d)$ is generated by Dehn twists on nonseparating simple closed curves,  $Z_d$ is a flexible surface.

If $d$ is odd, there is the upper bound \[\mathcal{E}(\mathbb{C}P^2,Z_d) \leq \Mod(Z_d)[q_{Z_d}].\] By Theorem \ref{admissiblespin}, $\Mod(Z_d)[q_{Z_d}]$ is generated by Dehn twists on nonseparating simple closed curves $\gamma \subset Z_d$ with $q_{Z_d}(\gamma) = 1$. Fix such a curve $\gamma$. We may assume $\gamma \subset P$. If $\theta_{S}$ denotes the linking form on $H_1(S;\mathbb{Z}/2\mathbb{Z})$ as a Seifert surface in $S^3$, then $i^*q_{Z} = \theta_S$ where $i : S \to Z_d$ denotes inclusion (Proposition \ref{Rochlinislink}).  It follows that $\mathrm{lk}(\gamma,\gamma^+) = 1 \mod 2$ and so $\phi^+(\gamma) = 0 \mod 2$. By Lemma \ref{untwist}, $T_{\gamma}\in \mathcal{E}(\mathbb{C}P^2,Z_d)$, and $Z_d$ is spin-flexible if $d$ is odd.

We now deal with the remaining low degree cases. If $d = 1$, spin flexibility follows from Proposition \ref{relboundaryhirose}. If $d = 2$, flexibility of $Z_2= X_2 \# \Sigma_1$ follows from Figure \ref{slide}. If $d = 3$, the surface $Z_3 = X_3 \# \Sigma_1$ is of genus 2 and $q_{Z_3}$ is an \textit{odd} (Arf invariant 1) quadratic form (Theorem \ref{ArfRochlin}). A generating set for the stabilizer $\Mod(Z_3)[q_{Z_3}]$ is given by a chain of four nonseparating admissible simple closed curves (see \cite[Lemma 4.4]{Hamenstadt}), and such a configuration of Hopf band curves on $S_{\beta_3,1} \subset S^3$ is easily constructed. Finally, consider $Z_4 = X_4 \# \Sigma_1$ of genus four. In Figure \ref{Degree4}, a Humphries generating set for $\Mod(Z_4)$ is depicted. All but one curve is a Hopf band curve. The curve $\alpha$ has $\mathrm{lk}(\alpha,\alpha^+) = -3$ and arguing as in Proposition \ref{existcurves}, using a suitably dual positive Hopf band curve, we see that $T_{\delta} \in \mathcal{E}(\mathbb{C}P^2,Z_4)$.  
\end{proof}

\begin{rmk} \label{d=3difference}
Hirose proves \cite[Theorem 5.1]{HiroseCP^2} that, for an unknot $\Sigma_g \subset S^4$ of genus $g$, the surface $X_3 \# \Sigma_g$ is spin flexible for any $g \geq 1$. Whilst Theorem \ref{ThmB} generalizes this to any $d$ for $g = 1$, a straightforward generalization of the methods shows that the surface $X_d \# \Sigma_g$ is (spin) flexible for any $g \geq 1$.
\end{rmk}

\bibliographystyle{alpha} 
\bibliography{refs}

\end{document}